\newtheorem{theorem}{Theorem}[section]
\newtheorem{proposition}[theorem]{Proposition}
\newtheorem{lemma}[theorem]{Lemma}
\newtheorem{corollary}[theorem]{Corollary}
\theoremstyle{definition}
\newtheorem{definition}[theorem]{Definition}
\newtheorem{example}[theorem]{Example} 
\newtheorem{examples}[theorem]{Examples}
\newtheorem{remark}[theorem]{Remark}
\newcommand{\uxa}{\ensuremath{(\underline{X},\underline{A})}} 
\newcommand{\cxx}{\ensuremath{(\underline{CX},\underline{X})}} 
\newcommand{\cxxk}{\ensuremath{(\underline{CX},\underline{X})^{K}}} 
\newcommand{\rest}{\ensuremath{\mbox{rest}}} 
\newcommand{\starr}{\ensuremath{\mbox{star}}} 
\newcommand{\link}{\ensuremath{\mbox{link}}} 
\newcommand{\W}{\ensuremath{\mathcal{W}}}
\newcounter{bean}
\newenvironment{letterlist}{\begin{list}{\rm ({\alph{bean}})}
      {\usecounter{bean}\setlength{\rightmargin}{\leftmargin}}}
      {\end{list}}
\newcommand{\namedright}[3]{\ensuremath{#1\stackrel{#2}
 {\longrightarrow}#3}}
\newcommand{\nameddright}[5]{\ensuremath{#1\stackrel{#2}
 {\longrightarrow}#3\stackrel{#4}{\longrightarrow}#5}}
\newcommand{\namedddright}[7]{\ensuremath{#1\stackrel{#2}
 {\longrightarrow}#3\stackrel{#4}{\longrightarrow}#5
  \stackrel{#6}{\longrightarrow}#7}}
\newcommand{\larrow}{\relbar\!\!\relbar\!\!\rightarrow}
\newcommand{\llarrow}{\relbar\!\!\relbar\!\!\larrow}
\newcommand{\lnamedright}[3]{\ensuremath{#1\stackrel{#2}
 {\larrow}#3}}
\newcommand{\llnamedddright}[7]{\ensuremath{#1\stackrel{#2}
 {\llarrow}#3\stackrel{#4}{\llarrow}#5
  \stackrel{#6}{\llarrow}#7}}
\newcommand{\qqed}{\hfill\Box}
\begin{document}
\title{The homotopy type of the polyhedral product for shifted complexes}
\author{Jelena Grbi\'{c}}
\address{School of Mathematics, University of Manchester,
         Manchester M13 9PL, United Kingdom}
\email{Jelena.Grbic@manchester.ac.uk}
\author{Stephen Theriault}
\address{Institute of Mathematics,
         University of Aberdeen, Aberdeen AB24 3UE, United Kingdom}
\email{s.theriault@abdn.ac.uk}

\subjclass[2000]{Primary 13F55, 55P15, Secondary 52C35.}
\date{}
\keywords{Davis-Januszkiewicz space, moment-angle complex,
   polyhedral product, homotopy type}

\begin{abstract}
We prove a conjecture of Bahri, Bendersky, Cohen and Gitler:
if $K$ is a shifted simplicial complex on $n$ vertices, $X_{1},\cdots, X_{n}$ are
spaces and $CX_{i}$ is the cone on $X_{i}$, then the polyhedral product
determined by $K$ and the pairs $(CX_{i},X_{i})$ is homotopy equivalent
to a wedge of suspensions of smashes of the $X_{i}$'s. This generalises 
earlier work of the two authors in the special case where each $X_{i}$ is a loop space. Connections are made to toric topology, combinatorics, and classical 
homotopy theory. 
\end{abstract} 

\maketitle

\section{Introduction} 

Polyhedral products generalize the notion of a product of spaces.  
They are of widespread interest due to their being fundamental objects 
which arise in many areas of mathematics. For example, in certain 
dynamical systems they arise as invariants of the system, in robotics 
they are related to configuration spaces of planar linkages, in 
combinatorics they appear as the complements of complex coordinate 
subspace arrangements, and in algebraic geometry they appear as 
certain intersections of quadrics. Their topological properties have 
attracted a great deal of recent attention due in part to their emergence 
as central objects of study in toric topology. This includes work  
on their geometric properties~\cite{BP1,BP2}, homology~\cite{BP1,DS},  
their rational homotopy~\cite{FT,NR}, and their homotopy 
types~\cite{BBCG1,BBCG2,GT1,GT2}. 

To define a polyhedral product, let $K$ be a simplicial complex on the index 
set $[n]$. For $1\leq i\leq n$, let $(X_{i},A_{i})$ be a pair of pointed 
$CW$-complexes, where $A_{i}$ is a pointed subspace of $X_{i}$. Let 
$\uxa=\{(X_{i},A_{i})\}_{i=1}^{n}$ be the sequence of pairs. For each 
simplex $\sigma\in K$, let $\uxa^{\sigma}$ be the subspace of 
$\prod_{i=1}^{n} X_{i}$ defined by 
\[\uxa^{\sigma}=\prod_{i=1}^{n} Y_{i}\qquad\qquad 
       \mbox{where}\ Y_{i}=\left\{\begin{array}{ll} 
                                             X_{i} & \mbox{if $i\in\sigma$} \\ 
                                             A_{i} & \mbox{if $i\notin\sigma$}. 
                                       \end{array}\right.\] 
The \emph{polyhedral product} determined by \uxa\ and $K$ is 
\[\uxa^{K}=\bigcup_{\sigma\in K}\uxa^{\sigma}\subseteq\prod_{i=1}^{n} X_{i}.\] 
For example, suppose each $A_{i}$ is a point. If $K$ is a disjoint union 
of $n$ points then $(\underline{X},\underline{\ast})^{K}$ is the wedge 
$X_{1}\vee\cdots\vee X_{n}$, and if $K$ is the standard $(n-1)$-simplex 
then $(\underline{X},\underline{\ast})^{K}$ is the product 
$X_{1}\times\cdots\times X_{n}$. 

The case $(\underline{X},\underline{\ast})^{K}$ is related to another case 
of particular interest. Observe that any polyhedral product $\uxa^{K}$ 
is a subset of the product $X_{1}\times\cdots\times X_{n}$. In the 
special case $(\underline{X},\underline{\ast})^{K}$, Denham and Suciu~\cite{DS} 
show that there is a homotopy fibration (that is, a fibration, up to homotopy) 
\begin{equation} 
  \label{cxxfib} 
  \nameddright{(\underline{C\Omega X},\underline{\Omega X})^{K}}{} 
    {(\underline{X},\underline{\ast})^{K}}{}{\prod_{i=1}^{n} X_{i}} 
\end{equation}  
where $C\Omega X$ is the cone on $\Omega X$. Special cases of 
this fibration recover some classical results in homotopy theory. 
For example, if $K$ is two distinct points, then 
$(\underline{C\Omega X},\underline{\Omega X})^{K}$ is the 
fibre of the inclusion 
\(\namedright{X_{1}\vee X_{2}}{}{X_{1}\times X_{2}}\). 
Ganea~\cite{G} identified the homotopy type of this fibre as 
$\Sigma\Omega X_{1}\wedge\Omega X_{2}$. If $K=\Delta^{n-1}_{k}$ 
is the full $k$-skeleton of the standard $n$-simplex, then Porter~\cite{P1} 
showed that for $0\leq k\leq n-2$ there is a homotopy equivalence 
\[(\underline{C\Omega X},\underline{\Omega X})^{K}\simeq\bigvee_{j=k+2}^{n}
      \left(\bigvee_{1\leq i_{1}<\cdots<i_{j}\leq n}\ \binom{j-1}{k+1}
      \Sigma^{k+1}\Omega X_{i_{1}}\wedge\cdots\wedge\Omega X_{i_{j}}\right),\] 
where $j\cdot Y$ denotes the wedge sum of $j$ copies of the space $Y$. 

The emergence of toric topology in the late 1990's brought renewed attention 
to these classical results, in a new context. Davis and Januszkiewicz~\cite{DJ} 
constructed a new family of manifolds with a torus action. The construction 
started with a simple convex polytope $P$ on $n$ vertices, passed to the 
simplicial complex $K=\partial P^{\ast}$ - the boundary of the dual of $P$,  
and associated to it a manifold~$\mathcal{Z}_{K}$ with a torus action and 
an intermediate space $DJ(K)$, which fit into a homotopy fibration 
\[\nameddright{\mathcal{Z}_{K}}{}{DJ(K)}{}{\prod_{i=1}^{n}\mathbb{C}P^{\infty}}.\] 
Buchstaber and Panov~\cite{BP1} recognized the space $DJ(K)$ as 
$(\underline{\mathbb{C}P}^{\infty},\underline{\ast})^{K}$, and this allowed 
them to generalize Davis and Januszkiewicz's construction to a homotopy fibration  
\[\nameddright{\mathcal{Z}_{K}}{}{DJ(K)}{}{\prod_{i=1}^{n}\mathbb{C}P^{\infty}}\] 
for any simplicial complex $K$ on $n$ vertices. Here, 
$DJ(K)=(\underline{\mathbb{C}P}^{\infty},\underline{\ast})^{K}$ and 
$\mathcal{Z}_{K}=(\underline{D}^{2},\underline{S}^{1})^{K}$. The 
spaces $DJ(K)$ and $\mathcal{Z}_{K}$ are central objects of study in toric topology, 
and their thorough study in~\cite{BP1,BP2} launched toric topology into the mainstream 
of modern algebraic topology. 
The generalization to polyhedral products soon followed in unpublished notes 
by Strickland and under the name $K$-powers in~\cite{BP1}, and came 
to prominence in recent work of Bahri, Bendersky, Cohen and Gitler~\cite{BBCG1}. 

Following Ganea's and Porter's results, it is natural to ask when the 
homotopy type of the fibre $(\underline{C\Omega X},\underline{\Omega X})^{K}$ 
in~(\ref{cxxfib}) can be recognized. It is too ambitious to hope to do this 
for all $K$, but it is reasonable to expect that it can be done for certain 
families of simplicial complexes. This is precisely what was done in 
earlier work of the authors. A simplicial complex $K$ is 
\emph{shifted} if there is an ordering on its vertices such that whenever 
$\sigma\in K$ and $\nu^{\prime}<\nu$, then 
$(\sigma-\nu)\cup\nu^{\prime}\in K$. This is a fairly large family 
of complexes, which includes Porter's case of full $k$-skeletons 
of a standard $n$-simplex. In~\cite{GT1} it was shown that if $K$ 
is shifted, then there is a homotopy equivalence 
\begin{equation} 
  \label{GTequiv} 
  (\underline{C\Omega X},\underline{\Omega X})^{K}\simeq 
        \bigvee_{\alpha\in\mathcal{I}}\Sigma^{\alpha(t)}\Omega X_{1}^{(\alpha_{1})} 
        \wedge\cdots\wedge\Omega X_{n}^{(\alpha_{n})} 
\end{equation}  
for some index set $\mathcal{I}$ (which can be made explicit),  where 
if $\alpha_{i}=0$ then the smash product is interpreted as omitting the 
factor $X_{i}$ rather than being trivial. 
The homotopy equivalence~(\ref{GTequiv}) has implications in combinatorics. 
In~\cite{BP1}, it was shown that $\mathcal{Z}_{K}$ is homotopy equivalent to 
the complement of the coordinate subspace arrangement determined by $K$. 
Such spaces have a long history of study by combinatorists. In particular, 
as $\mathcal{Z}_{K}=(\underline{D}^{2},\underline{S}^{1})^{K}$, the homotopy 
equivalence~(\ref{GTequiv}) implies that $\mathcal{Z}_{K}$ is homotopy 
equivalent to a wedge of spheres, which answered a major outstanding 
problem in combinatorics. 

Bahri, Bendersky, Cohen and Gitler~\cite{BBCG1} gave a general decomposition 
of $\Sigma\uxa^{K}$, which in the special case of $\cxx^{K}$ is as follows. 
Regard the simplices of $K$ as ordered sequences, $(i_{1},\ldots,i_{k})$ where 
$1\leq i_{1}<\cdots<i_{k}\leq n$. Let 
$\widehat{X}^{I}=X_{i_{1}}\wedge\cdots\wedge X_{i_{k}}$. Let $Y\ast Z$ be 
the \emph{join} of the topological spaces $X$ and $Y$, and recall that 
there is a homotopy equivalence $Y\ast Z\simeq\Sigma Y\wedge Z$. Let 
$K_{I}\subseteq K$ be the full subcomplex of $K$ consisting of the 
simplices in $K$ which have all their vertices in $K$, that is, 
$K=\{\sigma\cap I\mid\sigma\in K\}$. Let $\vert K\vert$ be the 
geometric realization of the simplicial complex~$K$. Then for any 
simplicial complex $K$, there is a homotopy equivalence 
\begin{equation} 
   \label{BBCGdecomp} 
   \Sigma\cxx^{K}\simeq\Sigma\left( 
      \bigvee_{I\notin K}\vert K_{I}\vert\ast\widehat{X}^{I}\right). 
\end{equation}  
In particular, (\ref{BBCGdecomp})~agrees with the suspension of 
the homotopy equivalence in~(\ref{GTequiv}) in the case of 
$(\underline{C\Omega X},\underline{\Omega X})^{K}$. Bahri, 
Bendersky, Cohen and Gitler conjectured that if $K$ is shifted 
then~(\ref{BBCGdecomp}) desuspends. Our main result is that 
this conjecture is true. 

\begin{theorem} 
   \label{cxxshifted} 
   Let $K$ be a shifted-complex. Then there is a homotopy equivalence 
   \[\cxxk\simeq\left(\bigvee_{I\notin K}\vert K_{I}\vert\ast\widehat{X}^{I}\right).\] 
\end{theorem}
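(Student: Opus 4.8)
The plan is to argue by induction on the number $n$ of vertices of $K$, the base case $n=1$ and the case where $n$ is not a vertex of $K$ being immediate. For the inductive step, order the vertices so that the shifting condition holds with $n$ the largest vertex, and set $K'=\operatorname{del}_{K}(n)$ and $L=\operatorname{link}_{K}(n)$. Then $K'$ and $L$ are shifted complexes on at most $n-1$ vertices, $L$ is a subcomplex of $K'$, and $K=\operatorname{star}_{K}(n)\cup K'$ with $\operatorname{star}_{K}(n)\cap K'=L$ and $\operatorname{star}_{K}(n)=\{n\}\ast L$. Since the polyhedral product functor sends this union of simplicial complexes to a homotopy pushout, and since $(\underline{CX},\underline{X})^{\{n\}\ast L}=CX_{n}\times(\underline{CX}',\underline{X}')^{L}$ while $(\underline{CX},\underline{X})^{K'}=(\underline{CX}',\underline{X}')^{K'}\times X_{n}$ and $(\underline{CX},\underline{X})^{L}=(\underline{CX}',\underline{X}')^{L}\times X_{n}$ (primes denoting restriction to $[n-1]$), this pushout presents $\cxxk$ as $(\underline{CX}',\underline{X}')^{K'}\times X_{n}$ with a fibrewise cone attached in the $X_{n}$--direction along $(\underline{CX}',\underline{X}')^{L}\times X_{n}$. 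A short manipulation of this description extracts a homotopy cofibration
\[\nameddright{(\underline{CX}',\underline{X}')^{K'}}{j}{\cxxk}{q}{\left((\underline{CX}',\underline{X}')^{K'}/(\underline{CX}',\underline{X}')^{L}\right)\wedge X_{n}},\]
and the coordinate projection $\prod_{i=1}^{n}CX_{i}\to\prod_{i=1}^{n-1}CX_{i}$ restricts to a retraction $r$ of $j$.

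Next I would feed the inductive hypothesis in at both ends. On one side $(\underline{CX}',\underline{X}')^{K'}\simeq\bigvee_{I\subseteq[n-1],\,I\notin K'}\vert(K')_{I}\vert\ast\widehat{X}^{I}$, and since $(K')_{I}=K_{I}$ and $I\notin K'\iff I\notin K$ for $I\subseteq[n-1]$, this is exactly the part of the asserted wedge indexed by the $I$ with $n\notin I$. On the other side I would use the combinatorial identity $K_{J\cup\{n\}}=(K')_{J}\cup_{L_{J}}(\{n\}\ast L_{J})$, which exhibits $\vert K_{J\cup\{n\}}\vert$ as the homotopy cofibre of $\vert L_{J}\vert\hookrightarrow\vert(K')_{J}\vert$, to identify $(\underline{CX}',\underline{X}')^{K'}/(\underline{CX}',\underline{X}')^{L}\simeq\bigvee_{J\subseteq[n-1],\,J\notin L}\vert K_{J\cup\{n\}}\vert$; smashing with $X_{n}$ then produces exactly the part of the asserted wedge indexed by the $I$ with $n\in I$. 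This step needs the inclusion $(\underline{CX}',\underline{X}')^{L}\to(\underline{CX}',\underline{X}')^{K'}$ to be compatible with the wedge decompositions, so I would strengthen the inductive statement to record naturality of the equivalence with respect to subcomplex inclusions; the BBCG decomposition~(\ref{BBCGdecomp}) is already natural in this sense, so the substantive claim is that it can be desuspended naturally.

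It then remains to split the cofibration, i.e. to produce a section $s$ of $q$: together with the two instances of the inductive hypothesis this gives $\cxxk\simeq(\underline{CX}',\underline{X}')^{K'}\vee\left[\left((\underline{CX}',\underline{X}')^{K'}/(\underline{CX}',\underline{X}')^{L}\right)\wedge X_{n}\right]$ and hence the theorem. That $q$ admits \emph{some} section is automatic once $j$ has the retraction $r$, for then the coboundary map of the cofibration is null-homotopic. This is the point at which shiftedness is essential, since it is the shifting order that makes the splitting $K=\operatorname{star}_{K}(n)\cup K'$ available with $L\subseteq K'$ and with $K'$, $L$ again shifted. The main obstacle, and the part I expect to require real care, is to carry out the splitting \emph{compatibly with the wedge decompositions} and to conclude a genuine homotopy equivalence rather than merely a homology isomorphism --- the $X_{i}$ being arbitrary $CW$-complexes, Whitehead's theorem is not at hand. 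I would handle this by building the maps $\vert K_{I}\vert\ast\widehat{X}^{I}\to\cxxk$ one summand at a time, using the cone coordinates in $\operatorname{star}_{K}(n)$ for the summands with $n\in I$ and the full-subcomplex inclusions $K_{I}\hookrightarrow K$ together with naturality of the polyhedral product to reduce the verification to the cases $n\in I$ and $n\notin I$ already settled inductively, with the suspension of the whole map being pinned down by~(\ref{BBCGdecomp}).
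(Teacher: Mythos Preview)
Your overall architecture---write $K=\operatorname{star}_K(n)\cup_L K'$ with $K'=\operatorname{del}_K(n)$ and $L=\operatorname{link}_K(n)$, extract a cofibre sequence
\[(\underline{CX}',\underline{X}')^{K'}\xrightarrow{\,j\,}\cxxk\xrightarrow{\,q\,}\bigl((\underline{CX}',\underline{X}')^{K'}/(\underline{CX}',\underline{X}')^{L}\bigr)\wedge X_n,\]
and split it---is coherent and genuinely different from the paper's, but the splitting step has a real gap. The inference ``$j$ has a retraction $\Rightarrow\delta\simeq\ast\Rightarrow q$ has a section'' breaks at the second arrow: vanishing of the Puppe coboundary $\delta\colon C\to\Sigma B$ does \emph{not} yield a section of $q$, because cofibre sequences are exact only contravariantly. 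The same reasoning would apply verbatim to $j\colon S^{p}\hookrightarrow S^{p}\times S^{q}$ with retraction the first projection; there $\delta\simeq\ast$, yet $q\colon S^{p}\times S^{q}\to S^{p}\ltimes S^{q}\simeq S^{q}\vee S^{p+q}$ has no section, since every map $S^{p+q}\to S^{p}\times S^{q}$ is zero on $H_{p+q}$. So the section must be \emph{constructed}, and your sketch of doing so via full-subcomplex inclusions $K_I\hookrightarrow K$ does not handle the summand $I=[n]$, where $K_I=K$ and no smaller complex is available. The naturality strengthening you invoke to identify $(\underline{CX}',\underline{X}')^{K'}/(\underline{CX}',\underline{X}')^{L}$ is a further substantial burden: the inductive hypothesis gives the homotopy types of the two spaces separately, but not the homotopy class of the inclusion between them.

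The paper avoids both problems by decomposing at the \emph{smallest} vertex rather than the largest. The crucial combinatorial input is Lemma~\ref{shifteddelta}: for shifted $K$, every $\sigma\in\mathrm{rest}\{2,\dots,n\}$ has $\partial\sigma\subset\mathrm{link}(1)$, hence the cone $(1)\ast\partial\sigma$ already lies in $\mathrm{star}(1)$. No analogue holds at vertex $n$---already for $K$ on $\{1,2,3\}$ with edges $(1,2),(1,3)$, the vertex $\{2\}\subset\partial(1,2)$ is not in $\mathrm{link}(3)$. This lets the paper filter $\mathrm{link}(1)\hookrightarrow\mathrm{rest}\{2,\dots,n\}$ by adjoining one maximal face $\tau$ at a time with $(1)\ast\partial\tau$ already present (Lemma~\ref{shiftedfilt}); Theorem~\ref{conefillpo}, powered by the elementary pushout Lemma~\ref{polemma}, then peels off an explicit wedge summand at each stage, and a retraction argument using the stable splitting~(\ref{BBCGdecomp}) (Lemma~\ref{fatwedgeretract2}) shows the remaining cofibre stays in $\W_n$. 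The outer induction is on the vertex count as you propose, but within each inductive step the work is a finite filtration rather than a single cofibration, and no naturality of the wedge decomposition is required.
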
 

The methods used to prove the results in~\cite{GT1} in the case 
$(\underline{C\Omega X},\underline{\Omega X})^{K}$ involved 
analyzing properties of the fibration~(\ref{cxxfib}). In the general 
case of $\cxx^{K}$, no such fibration exists, so we need to develop 
new methods. An added benefit is that these new methods also 
give a much faster proof of the results in~\cite{GT1}. As well, 
in Sections~\ref{sec:gluing} and~\ref{sec:wedge} we extend our 
methods to desuspend~(\ref{BBCGdecomp}) in cases where $K$ 
is not shifted.

\section{A special case} 
\label{sec:join} 

Let $\Delta^{n-1}$ be the standard $n$-simplex. For $0\leq k\leq n-1$, 
let $\Delta^{n-1}_{k}$ be the full $k$-skeleton of $\Delta^{n-1}$. In 
this brief section we will identify \cxxk\ when $K=\Delta^{n-1}_{n-2}$. 
We begin with some general observations which hold for any \uxa.   

\begin{lemma} 
   \label{inclusion} 
   Let $K$ be a simplicial complex on $n$ vertices. Let 
   $\sigma_{1},\sigma_{2}\in K$ and suppose that 
   $\sigma_{1}\subseteq\sigma_{2}$. 
   Then $\uxa^{\sigma_{1}}\subseteq\uxa^{\sigma_{2}}$. 
\end{lemma} 

\begin{proof} 
By definition, $\uxa^{\sigma_{1}}=\prod_{i=1}^{n} Y_{i}$ 
where $Y_{i}=X_{i}$ if $i\in\sigma_{1}$ and $Y_{i}=A_{i}$ if $i\notin\sigma_{1}$. 
Similarly, $\uxa^{\sigma_{2}}=\prod_{i=1}^{n} Y^{\prime}_{i}$ 
where $Y^{\prime}_{i}=X_{i}$ if $i\in\sigma_{2}$ and $Y^{\prime}_{i}=A_{i}$ 
if $i\notin\sigma_{2}$. Since $\sigma_{1}\subseteq\sigma_{2}$, 
if $i\in\sigma_{1}$ then $i\in\sigma_{2}$ so $Y_{i}=Y^{\prime}_{i}$. 
On the other hand, if $i\notin\sigma_{1}$ then $Y_{i}=A_{i}$, 
implying that $Y_{i}\subseteq Y^{\prime}_{i}$. Thus 
$\prod_{i=1}^{n} Y_{i}\subseteq\prod_{i=1}^{n} Y^{\prime}_{i}$ and 
the lemma follows. 
\end{proof} 

A face $\sigma\in K$ is called \emph{maximal} if there is no 
other face $\sigma^{\prime}\in K$ with the property that 
$\sigma\subsetneq\sigma^{\prime}$. In other words, a  
non-maximal face of $K$ is a proper subset of another face 
of $K$. So $K$ is the union of its maximal faces. Lemma~\ref{inclusion} 
then immediately implies the following. 

\begin{corollary} 
   \label{maxunion} 
   There is an equality of sets 
   $\uxa^{K}=\bigcup_{\sigma\in\mathcal{I}}\uxa^{\sigma}$ 
   where $\mathcal{I}$ runs over the list of maximal faces of~$K$.~$\qqed$  
\end{corollary}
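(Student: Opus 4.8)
The plan is to derive this directly from Lemma~\ref{inclusion} together with the observation, already made, that $K$ equals the union of its maximal faces. First I would dispatch the inclusion $\bigcup_{\sigma\in\mathcal{I}}\uxa^{\sigma}\subseteq\uxa^{K}$, which is immediate from the definition of the polyhedral product as $\bigcup_{\sigma\in K}\uxa^{\sigma}$, since every maximal face is in particular a face of $K$. For the reverse inclusion, I would take an arbitrary face $\sigma\in K$ and choose a maximal face $\tau\in\mathcal{I}$ with $\sigma\subseteq\tau$: if $\sigma$ is itself maximal, take $\tau=\sigma$; otherwise enlarge $\sigma$ within $K$ repeatedly, a process which terminates because $K$ has only finitely many faces. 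Lemma~\ref{inclusion} then gives $\uxa^{\sigma}\subseteq\uxa^{\tau}\subseteq\bigcup_{\sigma\in\mathcal{I}}\uxa^{\sigma}$, and taking the union over all $\sigma\in K$ yields $\uxa^{K}=\bigcup_{\sigma\in K}\uxa^{\sigma}\subseteq\bigcup_{\sigma\in\mathcal{I}}\uxa^{\sigma}$.

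There is essentially no obstacle to overcome here: the substantive content has already been isolated in Lemma~\ref{inclusion}, and the only additional point needing a word of justification is that every face of $K$ is contained in a maximal one, which is a finiteness observation. Accordingly I expect the proof to consist of the two short inclusions above and nothing more, matching the author's indication that the result follows immediately.
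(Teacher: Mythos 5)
Your proof is correct and follows exactly the route the paper intends: the paper leaves the corollary as an immediate consequence of Lemma~\ref{inclusion} together with the remark that every non-maximal face is a proper subset of another face, and your two inclusions (with the finiteness argument producing a maximal face containing any given face) simply write out that same argument in full.
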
 
 
For example, let $K=\Delta^{n-1}_{n-2}$. The maximal faces of $K$ 
are $\bar{\sigma}_{i}=(1,\ldots,\hat{i},\ldots,n)$ for $1\leq i\leq n$, 
where $\hat{i}$ means omit the $i^{th}$-coordinate. Thus 
$K=\bigcup_{i=1}^{n}\bar{\sigma}_{i}$ and Corollary~\ref{maxunion} 
implies that $\uxa^{K}=\bigcup_{i=1}^{n}\underline{X}^{\bar{\sigma}_{i}}$. 
Explicitly, we have  
$\underline{X}^{\bar{\sigma}_{i}}=X_{1}\times\cdots\times A_{i}\times\cdots\times X_{n}$ 
so 
\[\uxa^{K}=\bigcup_{i=1}^{n} X_{1}\times\cdots\times A_{i}\times\cdots\times X_{n}.\]  

As a special case, consider \cxxk. Then 
\begin{equation} 
   \label{cxxfatwedge} 
   \cxxk=\bigcup_{i=1}^{n}\underline{CX}^{\bar{\sigma}_{i}}= 
         \bigcup_{i=1}^{n} CX_{1}\times\cdots\times X_{i}\times\cdots\times CX_{n}. 
\end{equation} 
Porter~\cite[Appendix, Theorem 3]{P1} showed that there is a homotopy 
equivalence 
\[\Sigma^{n-1} X_{1}\wedge\cdots\wedge X_{n}\simeq 
       \bigcup_{i=1}^{n} CX_{1}\times\cdots\times X_{i}\times\cdots\times CX_{n}.\] 
Thus we obtain the following. 

\begin{proposition} 
   \label{join} 
   Let $K=\Delta^{n-1}_{n-2}$. Then there is a homotopy equivalence 
   \[\cxxk\simeq\Sigma^{n-1} X_{1}\wedge\cdots\wedge X_{n}.\] 
   $\qqed$ 
\end{proposition}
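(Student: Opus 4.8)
The plan is to reduce the claim to a classical theorem of Porter by means of a short combinatorial identification of the polyhedral product. First I would pin down the maximal faces of $K=\Delta^{n-1}_{n-2}$. Since $K$ consists of all subsets of $[n]$ of cardinality at most $n-1$, its maximal faces are precisely the $n$ sets $\bar{\sigma}_{i}=(1,\ldots,\hat{i},\ldots,n)$, each obtained by deleting a single vertex.

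Next I would apply Corollary~\ref{maxunion} (which rests on Lemma~\ref{inclusion}) to the pairs $(CX_{i},X_{i})$. This writes $\cxxk$ as the union $\bigcup_{i=1}^{n}\underline{CX}^{\bar{\sigma}_{i}}$ of the subspaces indexed by the maximal faces. Unwinding the definition, the $j^{th}$ coordinate of $\underline{CX}^{\bar{\sigma}_{i}}$ is $CX_{j}$ exactly when $j\in\bar{\sigma}_{i}$ and is $X_{j}$ exactly when $j\notin\bar{\sigma}_{i}$, that is, when $j=i$. Hence $\underline{CX}^{\bar{\sigma}_{i}}=CX_{1}\times\cdots\times X_{i}\times\cdots\times CX_{n}$ and $\cxxk$ is the \emph{fat wedge} $\bigcup_{i=1}^{n}CX_{1}\times\cdots\times X_{i}\times\cdots\times CX_{n}$ sitting inside $\prod_{i=1}^{n}CX_{i}$. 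This is exactly equation~(\ref{cxxfatwedge}).

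Finally I would invoke Porter's identification of this fat wedge, \cite[Appendix, Theorem~3]{P1}, which supplies a homotopy equivalence $\Sigma^{n-1}X_{1}\wedge\cdots\wedge X_{n}\simeq\bigcup_{i=1}^{n}CX_{1}\times\cdots\times X_{i}\times\cdots\times CX_{n}$. Composing with the identification of the previous step yields $\cxxk\simeq\Sigma^{n-1}X_{1}\wedge\cdots\wedge X_{n}$, as claimed.

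Since Porter's theorem does all the work, I do not expect a genuine obstacle; the only subtlety is the bookkeeping of which coordinates carry cones. If one preferred a self-contained treatment, Porter's statement can be reproved by induction on $n$, the base case $n=2$ being the standard decomposition $CX_{1}\times X_{2}\cup X_{1}\times CX_{2}=X_{1}\ast X_{2}\simeq\Sigma X_{1}\wedge X_{2}$ of the join. As a consistency check on the general decomposition~(\ref{BBCGdecomp}), note that the only $I\notin K$ is $I=[n]$, and $\vert K_{[n]}\vert=\vert\Delta^{n-1}_{n-2}\vert$ is the boundary of an $(n-1)$-simplex, homeomorphic to $S^{n-2}$, so $\vert K_{[n]}\vert\ast\widehat{X}^{[n]}\simeq S^{n-2}\ast(X_{1}\wedge\cdots\wedge X_{n})\simeq\Sigma^{n-1}X_{1}\wedge\cdots\wedge X_{n}$, in agreement with Theorem~\ref{cxxshifted}.
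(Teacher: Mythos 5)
Your proposal is correct and follows exactly the paper's own route: identify the maximal faces of $\Delta^{n-1}_{n-2}$, apply Corollary~\ref{maxunion} to realize $\cxxk$ as the fat wedge of~(\ref{cxxfatwedge}), and invoke Porter's theorem \cite[Appendix, Theorem~3]{P1}. The extra consistency check against~(\ref{BBCGdecomp}) is a nice touch but not part of the argument.
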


\section{Some general properties of polyhedral products} 
\label{sec:ppproperties} 

In this section we establish some general properties of polyhedral 
products which will be used later. First, we consider 
how the polyhedral product functor behaves with respect to a union 
of simplicial complexes. Let $K$ be a simplicial complex on $n$ vertices 
and suppose that $K=K_{1}\cup_{L} K_{2}$. 
Relabelling the vertices if necessary, we may assume that $K_{1}$ 
is defined on the vertices $\{1,\ldots,m\}$, $K_{2}$ is defined on 
the vertices $\{m-l+1,\ldots,n\}$ and $L$ is defined on the 
vertices $\{m-l+1,\ldots,m\}$. By including the vertex set 
$\{1,\ldots,m\}$ into the vertex set $\{1,\ldots,n\}$, we may 
regard $K_{1}$ as a simplicial complex on $n$ vertices. Call the 
resulting simplicial complex on $n$ vertices $\overline{K}_{1}$. 
Similarly, we may define simplicial complexes $\overline{K}_{2}$ 
and $\overline{L}$ on $n$ vertices. Then we have 
$K=\overline{K}_{1}\cup_{\overline{L}}\overline{K}_{2}$. The point  
in doing this is that the four objects $K$, $\overline{K}_{1}$, 
$\overline{K}_{2}$ and $\overline{L}$ are now in the same category 
of simplicial complexes on $n$ vertices, so we may apply the 
polyhedral product functor.   

\begin{proposition} 
   \label{uxaunion} 
   Let $K$ be a finite simplical complex on $n$ vertices. Suppose 
   $K=K_{1}\cup_{L} K_{2}$ where  $L=K_{1}\cap K_{2}$. Then 
   \[\uxa^{K}=\uxa^{\overline{K}_{1}}\cup_{\uxa^{\overline{L}}}\uxa^{\overline{K}_{2}}.\] 
\end{proposition}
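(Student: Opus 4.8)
The plan is to reduce the claimed equality of subspaces of $\prod_{i=1}^{n} X_{i}$ to two purely combinatorial facts about the ambient $n$-vertex complexes: that $\overline{K}_{1}\cup\overline{K}_{2}=K$ and that $\overline{K}_{1}\cap\overline{K}_{2}=\overline{L}$. The first is immediate from the set-up, since a face of $K=K_{1}\cup_{L} K_{2}$ is a face of $K_{1}$ or of $K_{2}$ and relabelling the vertices inside $[n]$ does not change which faces occur. For the second, a face lying in both $\overline{K}_{1}$ and $\overline{K}_{2}$ must have all its vertices in $\{1,\ldots,m\}\cap\{m-l+1,\ldots,n\}=\{m-l+1,\ldots,m\}$ and be a face of $K_{1}$ and of $K_{2}$, hence a face of $L=K_{1}\cap K_{2}$; the reverse inclusion is clear, so $\overline{K}_{1}\cap\overline{K}_{2}=\overline{L}$.

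The second ingredient is a ``support'' description of when a point lies in a polyhedral product. For $x=(x_{1},\ldots,x_{n})\in\prod_{i=1}^{n} X_{i}$, set $\omega(x)=\{\,i\in[n]\mid x_{i}\notin A_{i}\,\}$. I would record the observation that for any simplicial complex $J$ on $[n]$ one has $x\in\uxa^{J}$ if and only if $\omega(x)\in J$: indeed $x\in\uxa^{\sigma}$ exactly when $\omega(x)\subseteq\sigma$, so $x$ lies in $\uxa^{J}=\bigcup_{\sigma\in J}\uxa^{\sigma}$ precisely when $\omega(x)$ is contained in some face of $J$, which, since $J$ is closed under taking subsets, is the same as $\omega(x)\in J$. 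In particular, monotonicity ($J\subseteq J^{\prime}\Rightarrow\uxa^{J}\subseteq\uxa^{J^{\prime}}$) is immediate, and so is the fact that intersection of polyhedral products on a fixed vertex set corresponds to intersection of the indexing complexes.

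Combining the two ingredients finishes the argument. For $x\in\prod_{i=1}^{n} X_{i}$ we have $x\in\uxa^{K}$ iff $\omega(x)\in K=\overline{K}_{1}\cup\overline{K}_{2}$ iff $\omega(x)\in\overline{K}_{1}$ or $\omega(x)\in\overline{K}_{2}$ iff $x\in\uxa^{\overline{K}_{1}}\cup\uxa^{\overline{K}_{2}}$; hence $\uxa^{K}=\uxa^{\overline{K}_{1}}\cup\uxa^{\overline{K}_{2}}$. Likewise $x\in\uxa^{\overline{K}_{1}}\cap\uxa^{\overline{K}_{2}}$ iff $\omega(x)\in\overline{K}_{1}\cap\overline{K}_{2}=\overline{L}$ iff $x\in\uxa^{\overline{L}}$, so the intersection of $\uxa^{\overline{K}_{1}}$ and $\uxa^{\overline{K}_{2}}$ inside $\prod_{i=1}^{n} X_{i}$ is exactly $\uxa^{\overline{L}}$. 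Since $\overline{L}\subseteq\overline{K}_{1}$ and $\overline{L}\subseteq\overline{K}_{2}$ give $\uxa^{\overline{L}}\subseteq\uxa^{\overline{K}_{1}}$ and $\uxa^{\overline{L}}\subseteq\uxa^{\overline{K}_{2}}$, this says precisely that $\uxa^{K}=\uxa^{\overline{K}_{1}}\cup_{\uxa^{\overline{L}}}\uxa^{\overline{K}_{2}}$.

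There is no real obstacle here: the statement is a formal unwinding of the definition of the polyhedral product as a union indexed by the faces of $K$. The only point requiring a moment's care is the identification $\overline{K}_{1}\cap\overline{K}_{2}=\overline{L}$ of the ambient complexes, where it matters that $L$ was assumed to be exactly $K_{1}\cap K_{2}$ rather than merely some common subcomplex, together with the compatibility of intersection with the polyhedral product functor on a fixed vertex set. Alternatively one could bypass the support function and argue directly using Corollary~\ref{maxunion}, writing each polyhedral product as the union over its maximal faces, but the support-function formulation seems the cleanest.
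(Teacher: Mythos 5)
Your proof is correct, and it takes a somewhat different route from the paper's. The paper partitions the faces of $K$ into three collections --- the faces of $L$, the faces of $K_{1}$ not in $L$, and the faces of $K_{2}$ not in $L$ --- writes each of $\uxa^{\overline{L}}$, $\uxa^{\overline{K}_{1}}$, $\uxa^{\overline{K}_{2}}$, $\uxa^{K}$ as the corresponding union of the subspaces $\uxa^{\sigma}$, and reads off the union and intersection identities from these descriptions. You instead introduce the support function $\omega(x)=\{i\mid x_{i}\notin A_{i}\}$ and the pointwise criterion $x\in\uxa^{J}\Leftrightarrow\omega(x)\in J$, which converts both the union and the intersection of polyhedral products on a fixed vertex set directly into the union and intersection of the indexing complexes. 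The two arguments are equally elementary, but yours has one concrete advantage: it makes fully explicit the step the paper leaves implicit, namely that $\uxa^{\overline{K}_{1}}\cap\uxa^{\overline{K}_{2}}$ is no larger than $\uxa^{\overline{L}}$. In the paper's bookkeeping one must still check that $\uxa^{\sigma^{\prime}}\cap\uxa^{\sigma^{\prime\prime}}=\uxa^{\sigma^{\prime}\cap\sigma^{\prime\prime}}\subseteq\uxa^{\overline{L}}$ for $\sigma^{\prime}\in K_{1}\setminus L$ and $\sigma^{\prime\prime}\in K_{2}\setminus L$, which is exactly where the hypothesis $L=K_{1}\cap K_{2}$ (rather than $L$ being merely some common subcomplex) enters; your support-function formulation absorbs this automatically, and you correctly flag that this is the one hypothesis doing real work. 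The minor convention to keep in mind is that the criterion $\omega(x)\in J$ uses that $\emptyset$ is a face of $J$, which is the standard convention here.
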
 

\begin{proof} 
Since $K=K_{1}\cup_{L} K_{2}$ and $K$ is finite, the faces in $K$ can be 
put into three finite collections: (A) the faces in $L$, (B) the faces in $K_{1}$ 
that are not faces of $L$ and (C) the faces of $K_{2}$ that are not faces 
of $L$. Thus we have 
\begin{eqnarray*} 
      L & = & \bigcup_{\sigma\in A}\sigma \\  
      K_{1} & = & \left(\bigcup_{\sigma\in A}\right)\cup
             \left(\bigcup_{\sigma^{\prime}\in B}\sigma^{\prime}\right) \\ 
      K_{2} & = & \left(\bigcup_{\sigma\in A}\right)\cup 
             \left(\bigcup_{\sigma^{\prime\prime}\in C}\sigma^{\prime\prime}\right) \\  
      K & = & \left(\bigcup_{\sigma\in A}\sigma\right)\cup 
            \left(\bigcup_{\sigma^{\prime}\in B}\sigma^{\prime}\right) 
            \cup\left(\bigcup_{\sigma^{\prime\prime}\in C}\sigma^{\prime\prime}\right). 
\end{eqnarray*} 
By definition, for any simplicial complex $M$ on $n$ vertices, 
$\uxa^{M}=\bigcup_{\sigma\in M}\uxa^{\sigma}$. So in our case, we have  
\begin{eqnarray*} 
    \uxa^{\overline{L}} & = & \bigcup_{\sigma\in A}\uxa^{\sigma} \\  
    \uxa^{\overline{K}_{1}} & = & \left(\bigcup_{\sigma\in A}\uxa^{\sigma}\right)\cup 
         \left(\bigcup_{\sigma^{\prime}\in B}\uxa^{\sigma^{\prime}}\right) \\  
    \uxa^{\overline{K}_{2}} & = & \left(\bigcup_{\sigma\in A}\uxa^{\sigma}\right)\cup 
         \left(\bigcup_{\sigma^{\prime\prime}\in C}\uxa^{\sigma^{\prime\prime}}\right) \\  
    \uxa^{K} & = & \left(\bigcup_{\sigma\in A}\uxa^{\sigma}\right)\cup 
        \left(\bigcup_{\sigma^{\prime}\in B}\uxa^{\sigma^{\prime}}\right) 
       \cup\left(\bigcup_{\sigma^{\prime\prime}\in C}\uxa^{\sigma^{\prime\prime}}\right). 
\end{eqnarray*}  
In particular,  
$\uxa^{K}=\uxa^{\overline{K}_{1}}\cup\uxa^{\overline{K}_{2}}$ 
and 
$\uxa^{\overline{L}}=\uxa^{\overline{K}_{1}}\cap\uxa^{\overline{K}_{2}}$. 
That is, 
\[\uxa^{K}=\uxa^{\overline{K}_{1}}\cup_{\uxa^{\overline{L}}}\uxa^{\overline{K}_{2}}.\] 
\end{proof} 

It is appealing to slightly alter the statement of Proposition~\ref{uxaunion}. 

\begin{corollary} 
   \label{uxaunioncor} 
   If $K=K_{1}\cup_{L} K_{2}$ is a simplicial complex on $n$ vertices then 
   \[\uxa^{\overline{K}_{1}\cup_{\overline{L}}\overline{K}_{2}}= 
             \uxa^{\overline{K}_{1}}\cup_{\uxa^{\overline{L}}}\uxa^{\overline{K}_{2}}.\] 
   That is, the polyhedral product functor commutes with pushouts.~$\qqed$  
\end{corollary}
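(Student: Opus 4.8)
The plan is to observe that Corollary~\ref{uxaunioncor} is nothing more than a repackaging of Proposition~\ref{uxaunion}. Recall from the discussion preceding that proposition that, after relabelling vertices, a decomposition $K=K_{1}\cup_{L}K_{2}$ with $L=K_{1}\cap K_{2}$ lifts to an honest equality $K=\overline{K}_{1}\cup_{\overline{L}}\overline{K}_{2}$ of simplicial complexes on the common vertex set $[n]$, where $\overline{L}=\overline{K}_{1}\cap\overline{K}_{2}$ inside the poset of subcomplexes of the full simplex on $[n]$. Hence $\uxa^{K}=\uxa^{\overline{K}_{1}\cup_{\overline{L}}\overline{K}_{2}}$ by definition, and Proposition~\ref{uxaunion} identifies this space with $\uxa^{\overline{K}_{1}}\cup_{\uxa^{\overline{L}}}\uxa^{\overline{K}_{2}}$. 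So the first displayed equality of the corollary is immediate.

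For the parenthetical assertion that the polyhedral product functor commutes with pushouts, I would first note that in the category of simplicial complexes on a fixed vertex set $[n]$ the pushout of two inclusions $\overline{L}\hookrightarrow\overline{K}_{1}$ and $\overline{L}\hookrightarrow\overline{K}_{2}$ with $\overline{L}=\overline{K}_{1}\cap\overline{K}_{2}$ exists and is computed as the union $\overline{K}_{1}\cup\overline{K}_{2}$; thus every such pushout is of the form treated above. On the topological side, Proposition~\ref{uxaunion} has already shown $\uxa^{\overline{L}}=\uxa^{\overline{K}_{1}}\cap\uxa^{\overline{K}_{2}}$ and $\uxa^{K}=\uxa^{\overline{K}_{1}}\cup\uxa^{\overline{K}_{2}}$, which exhibits $\uxa^{K}$ as the pushout of the induced inclusions $\uxa^{\overline{L}}\hookrightarrow\uxa^{\overline{K}_{i}}$. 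Combining the two observations gives the statement.

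There is essentially no obstacle here, since all the real content was already isolated in Proposition~\ref{uxaunion}; the only point worth spelling out, if one wants to be scrupulous, is that the category of simplicial complexes on $[n]$ has pushouts along inclusions and that they are computed as unions, so that the phrase ``commutes with pushouts'' is unambiguous. That is elementary, so the corollary follows with no further work. $\qqed$
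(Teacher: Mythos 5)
Your proposal is correct and matches the paper exactly: the paper gives no separate proof of Corollary~\ref{uxaunioncor}, treating it as an immediate restatement of Proposition~\ref{uxaunion} via the identification $K=\overline{K}_{1}\cup_{\overline{L}}\overline{K}_{2}$, which is precisely your argument. Your extra remark that pushouts of inclusions in the category of simplicial complexes on $[n]$ are computed as unions is a harmless (and reasonable) clarification of the phrase ``commutes with pushouts.''
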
 

Next, suppose $K$ is a simplicial complex on the index set $[n]$. 
Let $L$ be a subcomplex of $K$. Reordering the indices of necessary, 
assume that the vertices of $L$ are $\{1,\ldots,m\}$ for $m\leq n$. 
For the application we have in mind, specialize to $\cxx^{K}$. 
Let $\widehat{X}=\prod_{i=m+1}^{n} X_{i}$. Since the indices 
of the factors in $\widehat{X}$ are complementary to the vertex 
set $\{1,\ldots,m\}$ of $L$, the inclusion 
\(\namedright{L}{}{K}\) 
induces an inclusion 
\(I\colon\namedright{\cxx^{L}\times\widehat{X}}{}{\cxx^{K}}\). 
In Proposition~\ref{projincl} we show that the restriction of~$I$ 
to~$\widehat{X}$ is null homotopic. We first need a preparatory lemma.  

\begin{lemma} 
   \label{projincllemma} 
   The inclusion 
   \[J\colon\namedright{X_{1}\times\cdots\times X_{n}}{} 
        {\bigcup_{i=1}^{n} X_{1}\times\cdots\times CX_{i}\times\cdots\times X_{n}}\] 
   is null homotopic. 
\end{lemma}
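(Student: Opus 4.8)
The plan is to exhibit an explicit nullhomotopy of $J$ by contracting into the "fat wedge" using the cone coordinates, one at a time. The target space $\bigcup_{i=1}^{n} X_{1}\times\cdots\times CX_{i}\times\cdots\times X_{n}$ consists of those tuples in which at least one coordinate may leave $X_{i}$ and move into the cone $CX_{i}$; the point is that $X_{1}\times\cdots\times X_{n}$ sits inside this union as the "cone parameter $=0$ everywhere" slice, and we want to slide it off. Concretely, write a point of $CX_{i}$ as $[x_{i},s_{i}]$ with $s_{i}\in[0,1]$ and $s_{i}=0$ the cone point, $s_{i}=1$ the copy of $X_{i}$. I would define a homotopy $H\colon (X_{1}\times\cdots\times X_{n})\times[0,1]\to\bigcup_{i=1}^{n} X_{1}\times\cdots\times CX_{i}\times\cdots\times X_{n}$ that, at time $t$, pushes the first coordinate up the cone: on the summand where the cone is in position $1$, send $(x_{1},\ldots,x_{n},t)$ to $([x_{1},1-t],x_{2},\ldots,x_{n})$. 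At $t=0$ this is $J$ (recording $x_{1}$ as the point $[x_{1},1]$ which is just $x_{1}\in X_{1}\subseteq CX_{1}$), and at $t=1$ the first coordinate is the cone point of $CX_{1}$.

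Having done that, the image of $H(-,1)$ lies in $\{\ast\}\times X_{2}\times\cdots\times X_{n}$, where $\ast$ denotes the cone point of $CX_{1}$; but this subspace is precisely $\underline{X}^{\bar{\sigma}_{1}}$-type data sitting inside, and more to the point it is contained in $CX_{1}\times X_{2}\times\cdots\times X_{n}$ with the first coordinate fixed at the cone point. Now I would run a second homotopy contracting $X_{2}$ into the cone $CX_{2}$ in the same way while keeping the first coordinate at the cone point — this is legitimate since the second summand $CX_{1}\times CX_{2}\times X_{3}\times\cdots\times X_{n}$... wait, the target only has $CX_{i}$ in a single position. So instead, after collapsing the first coordinate to the cone point, I should observe that the resulting subspace $\{\ast\}\times X_{2}\times\cdots\times X_{n}$ already lies in the summand where the cone is in the \emph{second} position (the first coordinate being the cone point is certainly an allowed point of $CX_{1}$), and then contract the second coordinate up $CX_{2}$, and so on. Iterating, after $n$ stages every coordinate has been pushed to its cone point, so the composite homotopy ends at the constant map at the basepoint $(\ast,\ldots,\ast)$.

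The one genuine subtlety — and the step I expect to need the most care — is continuity at the seams between these successive homotopies, and more importantly checking at each stage that the partially-collapsed configuration genuinely lies in one of the allowed summands so that the next push is a homotopy \emph{into the target} rather than out of it. This is where one uses that having any coordinate equal to its cone point keeps you inside the union: if coordinates $1,\ldots,k-1$ are at their cone points, the tuple lies in the $k$-th summand $X_{1}\times\cdots\times X_{k-1}\times CX_{k}\times X_{k+1}\times\cdots\times X_{n}$ with the first $k-1$ entries being (cone points, hence legitimate points of those $X_{j}$'s — here we need the cone point to be the chosen basepoint of $X_{j}$, which it is). Concatenating the $n$ homotopies via the usual reparametrisation of $[0,1]$ into $n$ subintervals, and noting that each boundary configuration is fixed by the start of the next homotopy, gives a well-defined continuous $\overline{H}\colon (X_{1}\times\cdots\times X_{n})\times[0,1]\to\bigcup_{i=1}^{n} X_{1}\times\cdots\times CX_{i}\times\cdots\times X_{n}$ from $J$ to a constant map. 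Hence $J$ is null homotopic. (Alternatively, and perhaps more cleanly for the write-up, one can do all $n$ contractions simultaneously but at staggered speeds, sending $(x_{1},\ldots,x_{n},t)$ to the tuple with $i$-th coordinate $[x_{i}, f_{i}(t)]$ where $f_{i}$ decreases from $1$ to $0$ on $[(i-1)/n, i/n]$; one still must verify membership in the union at every time, which holds because at every $t<1$ some $f_{i}(t)$ equals $0$ or because at $t=1$ all do — the delicate check being the instants where one $f_{i}$ has just hit $0$ and the next has not yet started.)
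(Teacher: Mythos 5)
Your proof is correct and is essentially the argument the paper gives: the paper filters the target by $F_{k}=\bigcup_{i=1}^{k} X_{1}\times\cdots\times CX_{i}\times\cdots\times X_{n}$ and shows the inclusion into $F_{k}$ is homotopic to the projection off the first $k$ coordinates followed by an inclusion, which is precisely your coordinate-by-coordinate contraction up the cones. The check you correctly isolate as the crux --- that a coordinate pushed to the cone point is the basepoint, hence a legitimate point of $X_{j}$, so the partially collapsed tuple stays inside the union and in particular lands in the next summand --- is exactly what the paper's iteration relies on.
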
  

\begin{proof} 
For $1\leq k\leq n$, let 
$F_{k}=\bigcup_{i=1}^{k} X_{1}\times\cdots\times CX_{i}\times\cdots\times X_{n}$. 
Then $F_{1}\subseteq F_{2}\subseteq\cdots\subseteq F_{n}$, and  
$\{F_{k}\}_{k=1}^{n}$ is a filtration of 
$\bigcup_{i=1}^{n} X_{1}\times\cdots\times CX_{i}\times\cdots\times X_{n}$. 
Observe that $J$ factors as a composite of inclusions 
\(\nameddright{X_{1}\times\cdots\times X_{n}}{}{F_{1}}{}{F_{2}}\longrightarrow 
       \cdots\longrightarrow F_{n}\). 

Consider first the inclusion 
\(\namedright{X_{1}\times\cdots\times X_{n}}{} 
      {F_{1}=CX_{1}\times X_{2}\times\cdots\times X_{n}}\). 
The cone in the first coordinate of $F_{1}$ implies that this inclusion  
is homotopic to the composite 
\(\nameddright{X_{1}\times\cdots\times X_{n}}{\pi_{1}} 
      {X_{2}\times\cdots\times X_{n}}{\varphi_{1}} 
      {CX_{1}\times X_{2}\times\cdots\times X_{n}}\), 
where $\pi_{1}$ is the projection and $\varphi_{1}$ is the inclusion. 
Composing into 
$F_{2}=CX_{1}\times X_{2}\times\cdots\times X_{n}\cup 
      X_{1}\times CX_{2}\times X_{3}\times\cdots\times X_{n}$, 
we obtain a homotopy commutative diagram 
\[\diagram 
        & X_{2}\times\cdots\times X_{n}\rto\dto^{\varphi_{1}} 
            & CX_{2}\times X_{3}\times\cdots\times X_{n}\dto \\ 
        X_{1}\times\cdots\times X_{n}\rto\urto^{\pi_{1}} 
             & F_{1}\rto & F_{2}  
  \enddiagram\] 
where the square strictly commutes and each map in the square 
is an inclusion. As before, the map 
\(\namedright{X_{2}\times\cdots\times X_{n}}{} 
     {CX_{2}\times X_{3}\times\cdots\times X_{n}}\) 
in the top row to is homotopic to the composite 
\(\nameddright{X_{2}\times\cdots\times X_{n}}{} 
      {X_{3}\times\cdots\times X_{n}}{}{CX_{2}\times X_{2}\times\cdots\times X_{n}}\) 
where the left map is the projection and the right map is the 
inclusion. Thus the inclusion 
\(\namedright{X_{1}\times\cdots\times X_{n}}{}{F_{2}}\) 
is homotopic to the composite 
\(\nameddright{X_{1}\times\cdots\times X_{n}}{\pi_{2}} 
       {X_{3}\times\cdots\times X_{n}}{\varphi_{2}}{F_{2}}\), 
where $\pi_{2}$ is the projection and $\varphi_{2}$ is an 
inclusion. Iterating, we obtain that the inclusion 
\(\namedright{X_{1}\times\cdots\times X_{n}}{j}{F_{n}}\) 
is homotopic to the composite 
\(\nameddright{X_{1}\times\cdots\times X_{n}}{\pi_{n}}{\ast}{\varphi_{n}}{F_{n}}\) 
where $\pi_{n}$ is the projection and $\varphi_{n}$ is the inclusion. 
Hence $J$ is null homotopic. 
\end{proof} 

\begin{proposition} 
   \label{projincl} 
   Let $K$ be a simplicial complex on the index set $[n]$ and 
   let $L$ be a subcomplex of~$K$ on $[m]$, where $m\leq n$. 
   Suppose that each vertex $\{i\}\in K$ for $m+1\leq i\leq n$. 
   Let $\widehat{X}=\prod_{i=m+1}^{n} X_{i}$. Then the restriction of 
   \(\namedright{\cxx^{L}\times\widehat{X}}{I}{\cxx^{K}}\) 
   to $\widehat{X}$ is null homotopic.  
\end{proposition}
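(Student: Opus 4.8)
The plan is to reduce the claim to the null homotopy already proven in Lemma~\ref{projincllemma}. First I would note that the inclusion $I\colon\cxx^L\times\widehat{X}\to\cxx^K$ restricted to $\widehat{X}$ can be factored through an intermediate polyhedral product that only involves the vertices $m+1,\ldots,n$. Concretely, let $M$ be the full subcomplex of $K$ on the vertex set $\{m+1,\ldots,n\}$; by hypothesis every singleton $\{i\}$ with $m+1\le i\le n$ lies in $K$, hence in $M$, so $M$ contains the discrete complex on these $n-m$ vertices. The restriction of $I$ to $\widehat{X}=\prod_{i=m+1}^{n} X_{i}$ lands inside the sub-polyhedral-product $\cxx^M$ (a copy sitting in $\cxx^K$ with $CX_i$ in the coordinates $1,\ldots,m$ collapsed to the cone point), so it suffices to show that the inclusion $\widehat{X}\to\cxx^M$ is null homotopic, where now $\widehat{X}$ is the product over \emph{all} the vertices of $M$.

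Next I would identify that inclusion with the map $J$ of Lemma~\ref{projincllemma}. The subtlety is that $M$ need not be the full $(n-m-1)$-skeleton $\Delta^{n-m-1}_{n-m-2}$, so $\cxx^M$ is generally smaller than the fat wedge $\bigcup_{i} CX_{m+1}\times\cdots\times X_i\times\cdots\times CX_n$ appearing in the lemma. However, $M$ does contain every vertex, so $\cxx^M$ contains $\bigcup_i X_{m+1}\times\cdots\times CX_i\times\cdots\times X_n$ only if $M$ contains every edge-deleted face --- which it may not. So the cleaner route is: $M$ contains all $n-m$ vertices, hence $\cxx^M \supseteq \bigcup_{i=m+1}^n X_{m+1}\times\cdots\times X_{i-1}\times CX_i\times X_{i+1}\times\cdots\times X_n$ is \emph{false} in general; instead $M$ being a complex on all vertices guarantees $\cxx^M$ contains the wedge $CX_{m+1}\vee\cdots\vee CX_n$ sitting inside $\widehat{CX}$, and the inclusion $\widehat{X}\hookrightarrow\cxx^M$ factors through $\widehat{X}\hookrightarrow \bigcup_{i} X_{m+1}\times\cdots\times CX_i\times\cdots\times X_n$ only if those partial products lie in $\cxx^M$.

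So the honest argument I would give is: compose $I|_{\widehat{X}}$ with the inclusion $\cxx^M\hookrightarrow \bigcup_{i=m+1}^{n} X_{m+1}\times\cdots\times CX_i\times\cdots\times X_n$, which exists since $\cxx^M\subseteq\prod_{i=m+1}^n CX_i$ and is contained in the fat-wedge-type union coming from the $(n-m-1)$-skeleton --- no, again this requires each $\overline\sigma_i\in M$. The clean statement that \emph{is} true: for any complex $M$ on vertex set $V$ with $\vert V\vert = N$, we have $\cxx^M \subseteq (\underline{CX},\underline{X})^{\Delta^{N-1}_{N-2}} = \bigcup_{i} CX_{i_1}\times\cdots\times X_i\times\cdots\times CX_{i_N}$ provided $M\ne\Delta^{N-1}$, i.e. provided $I\notin K$ for the relevant index set; but we do not need that --- we only need $\cxx^M\subseteq \prod CX_i$, and then the inclusion $\widehat X\to \cxx^M\to\prod CX_i$ is null since $\prod CX_i$ is contractible. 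That is the real point. The main obstacle is therefore purely bookkeeping: verifying that the composite $\namedright{\widehat{X}}{}{\cxx^L\times\widehat{X}}$ followed by $I$ and then by the inclusion $\cxx^K\hookrightarrow \prod_{i=1}^n CX_i$ agrees with the standard inclusion $\widehat X\hookrightarrow\prod_{i=1}^n CX_i$, whose image lies in a contractible space --- but this shows $I|_{\widehat X}$ composed with $\cxx^K\hookrightarrow\prod CX_i$ is trivial, not that $I|_{\widehat X}$ itself is trivial.

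Thus the argument must actually use Lemma~\ref{projincllemma} rather than contractibility of $\prod CX_i$. The correct factorization: since each $\{i\}\in K$ for $m+1\le i\le n$, the image of $\widehat X$ under $I$ lies in $\bigcup_{i=m+1}^n \bigl(\cxx^L\times X_{m+1}\times\cdots\times CX_i\times\cdots\times X_n\bigr)$ --- because a point of $\widehat X$ has all coordinates $m+1,\ldots,n$ in $X_i$, and for each such coordinate $i$ the face $\{i\}$ contributes the stratum with $X_i$ and all other complementary coordinates coned off to the $L$-part; more carefully, $\widehat{X}$ maps into the union over $i$ of strata where coordinate $i$ is allowed to be in $X_i$ while, in the $\prod_{j=m+1}^n$ block, the other coordinates are coned. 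Setting $W=\cxx^L$, this identifies $I|_{\widehat X}$ with the composite $\namedright{\widehat X}{J}{\bigcup_{i=m+1}^n X_{m+1}\times\cdots\times CX_i\times\cdots\times X_n}\hookrightarrow \cxx^K$, where $J$ is precisely the map of Lemma~\ref{projincllemma} for the $n-m$ spaces $X_{m+1},\ldots,X_n$. By that lemma $J$ is null homotopic, hence so is $I|_{\widehat X}$. The main obstacle, then, is the first step: carefully checking that the singleton hypothesis $\{i\}\in K$ forces the image of $\widehat X$ into this fat-wedge-like subspace, i.e.\ unwinding the definition of the polyhedral product to see which strata $\widehat{X}$ actually meets and confirming the factorization through $J$ is honest and not merely up to homotopy.
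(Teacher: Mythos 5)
Your final paragraph is exactly the paper's proof: the restriction of $I$ to $\widehat{X}$ factors through the map $J$ of Lemma~\ref{projincllemma} applied to the spaces $X_{m+1},\ldots,X_{n}$, followed by the inclusion of $\bigcup_{i=m+1}^{n} X_{m+1}\times\cdots\times CX_{i}\times\cdots\times X_{n}$ into $\cxx^{K}$, which exists precisely because each vertex $\{i\}\in K$ for $m+1\leq i\leq n$; the null homotopy of $J$ then finishes the argument. The earlier detours (factoring through the full subcomplex on $\{m+1,\ldots,n\}$, or appealing to contractibility of $\prod CX_{i}$) are correctly identified as dead ends and discarded, so the writeup is cluttered but the argument that survives is the right one and matches the paper's.
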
 

\begin{proof}  
By definition of the polyhedral product, 
$\cxx^{\{i\}}=X_{1}\times\cdots\times CX_{i}\times\cdots X_{n}$. 
Since each vertex $\{i\}\in K$ for $m+1\leq i\leq n$, we obtain an inclusion 
\[I^{\prime\prime}\colon\namedright  
      {\bigcup_{i=m+1}^{n} X_{1}\times\cdots\times CX_{i}\times\cdots X_{n}}{}{\cxxk}.\] 
As the indices $\{m+1,\ldots,n\}$ are complementary to the vertex set 
$\{1,\ldots,m\}$ of $L$, the restriction of $I^{\prime\prime}$ to 
$X_{1}\times\cdots\times X_{m}$ factors through 
\(i\colon\namedright{\cxx^{L}}{}{\cxx^{K}}\). 
Thus we can take the coordinate-wise product of $i$ and $I^{\prime\prime}$ 
to obtain an inclusion 
\[I^{\prime}\colon\namedright{\cxx^{L}\times\left(  
    \bigcup_{i=1}^{n} X_{m+1}\times\cdots\times CX_{i}\times\cdots X_{n}\right)}{}{\cxxk}.\]  
Observe that $I=I^{\prime}\circ J^{\prime}$, where $J^{\prime}$ is the inclusion  
\[J^{\prime}\colon\lnamedright{\cxx^{L}\times X_{m+1}\times\cdots\times X_{n}}{1\times J} 
        {\cxx^{L}\times\left(\bigcup_{i=m+1}^{n} 
        X_{m+1}\times\cdots\times CX_{i}\times\cdots\times X_{n}}\right).\] 
By Lemma~\ref{projincllemma}, $J$ is null homotopic. Thus 
the restriction of $J^{\prime}$ to $\widehat{X}=X_{m+1}\times\cdots\times X_{n}$ 
is null homotopic. Therefore the restriction of $I$ to $\widehat{X}$ is null 
homotopic. 
\end{proof} 

It is worth pointing out the special case when $L=\emptyset$. By the definition 
of the polyhedral product, $\cxx^{\emptyset}=X_{1}\times\cdots\times X_{n}$. 
Considering the inclusion 
\(\namedright{\cxx^{\emptyset}}{}{\cxx^{K}}\), 
Proposition~\ref{projincl} immediately implies the following. 

\begin{corollary} 
   \label{projinclcor} 
   Let $K$ be a simplicial complex on the index set $[n]$ and suppose that 
   each vertex is in $K$. Then the inclusion 
   \(\namedright{X_{1}\times\cdots\times X_{n}}{}{\cxx^{K}}\) 
   is null homotopic.~$\qqed$ 
\end{corollary}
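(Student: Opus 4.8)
The plan is to read this off from Proposition~\ref{projincl} by taking $L$ to be the empty complex, so that $m = 0$. With this choice $\widehat{X} = \prod_{i=1}^{n} X_{i}$, the space $\cxx^{L}$ is a point (an empty product of spaces), and hence $\cxx^{L} \times \widehat{X}$ is just $\widehat{X}$, which by the definition of the polyhedral product is $\cxx^{\emptyset}$. Under these identifications the map $I$ of Proposition~\ref{projincl} is exactly the inclusion $\namedright{X_{1} \times \cdots \times X_{n}}{}{\cxx^{K}}$ induced by $\emptyset \subseteq K$, and the restriction of $I$ to $\widehat{X}$ is the whole of this map. Moreover the hypothesis of Proposition~\ref{projincl} that $\{i\} \in K$ for $m+1 \le i \le n$ becomes precisely the standing hypothesis that every vertex lies in $K$. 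So Proposition~\ref{projincl} applies and the corollary follows at once.

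If one prefers to argue without unwinding the $m=0$ case of Proposition~\ref{projincl}, there is an equally short direct route through Lemma~\ref{projincllemma}. Since every vertex $\{i\}$ lies in $K$, the subspace $\cxx^{\{i\}} = X_{1} \times \cdots \times CX_{i} \times \cdots \times X_{n}$ is contained in $\cxx^{K}$, and because $\emptyset \subseteq \{i\}$ Lemma~\ref{inclusion} gives $\cxx^{\emptyset} \subseteq \cxx^{\{i\}}$ for each $i$; hence $\cxx^{\emptyset} \subseteq \bigcup_{i=1}^{n} X_{1}\times\cdots\times CX_{i}\times\cdots\times X_{n} \subseteq \cxx^{K}$. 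The inclusion $\namedright{X_{1}\times\cdots\times X_{n}}{}{\cxx^{K}}$ therefore factors through $\bigcup_{i=1}^{n} X_{1}\times\cdots\times CX_{i}\times\cdots\times X_{n}$, and Lemma~\ref{projincllemma} says that the inclusion of $X_{1}\times\cdots\times X_{n}$ into this union is null homotopic; composing, the original inclusion is null homotopic.

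The statement is a genuine corollary, so there is no substantial obstacle. The one point I would take care over is the bookkeeping for the empty complex in the first approach --- checking that the degenerate case $m=0$ (with $\cxx^{L}$ a point and $L$ on no vertices) really is covered by the proof of Proposition~\ref{projincl}, where the iterated projection-and-inclusion factorisation and the coordinate-wise product $i \times I^{\prime\prime}$ were written with a nonempty $L$ in mind. Since the second approach sidesteps this entirely by invoking Lemma~\ref{projincllemma} directly, I would present that as the proof and keep the deduction from Proposition~\ref{projincl} as the motivating remark preceding the statement.
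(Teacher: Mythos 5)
Your proposal is correct and matches the paper, which obtains this corollary exactly as in your first approach: take $L=\emptyset$ in Proposition~\ref{projincl}, so that $\cxx^{\emptyset}=X_{1}\times\cdots\times X_{n}$ and the restriction to $\widehat{X}$ is the whole inclusion. Your alternative direct route through Lemma~\ref{projincllemma} is just an unwinding of what Proposition~\ref{projincl} reduces to when $L$ is empty, so it is the same underlying argument; it is a reasonable way to sidestep the degenerate-case bookkeeping you flag, but no new idea is involved.
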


\section{A condition on $K$ for identifying the homotopy type of $\cxx^{K}$} 
\label{sec:condition} 

The goal of this section is to prove Theorem~\ref{conefillpo}, 
which considers conditions for building a simplicial complex $K$  
one simplex at a time in such a way that the homotopy type 
of $\cxx^{K}$ can be determined. This will be a key tool in 
proving Theorem~\ref{cxxshifted}, which identifies the homotopy 
type of $\cxx^{K}$ for a shifted complex $K$. 

We begin with a standard definition from combinatorics. Given 
simplicial complexes $K_{1}$ and $K_{2}$ on sets $\mathcal{S}_{1}$ 
and $\mathcal{S}_{2}$ respectively, the \emph{join} $K_{1}*K_{1}$ 
is the simplicial complex
\[K_{1}*K_{2}:=\{\sigma\subset\mathcal{S}_{1}\cup\mathcal{S}_{2} \,\vert\
    \sigma=\sigma_{1}\cup\sigma_{2}, \sigma_{1}\in K_{2}, \sigma_{2}\in K_{1}\}\] 
on the set $\mathcal{S}_{1}\cup\mathcal{S}_{2}$. The definition of the 
polyhedral product immediately implies the following. 

\begin{lemma} 
   \label{joinproduct} 
   Let $K_{1}$ and $K_{2}$ be simplicial complexes on index sets 
   $\{1,\ldots,n\}$ and $\{n+1,\ldots,m\}$ respectively. Then 
   $\uxa^{K_{1}\ast K_{2}}=\uxa^{K_{1}}\times\uxa^{K_{2}}$.~$\qqed$ 
\end{lemma}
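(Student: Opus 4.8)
The statement to prove is Lemma~\ref{joinproduct}: for simplicial complexes $K_1$ on $\{1,\dots,n\}$ and $K_2$ on $\{n+1,\dots,m\}$, we have $\uxa^{K_1 \ast K_2} = \uxa^{K_1} \times \uxa^{K_2}$.

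The plan is to unwind the definition of the polyhedral product on both sides and match them up face by face. First I would record that a simplex of $K_1 \ast K_2$ is exactly a set of the form $\tau = \tau_1 \cup \tau_2$ with $\tau_1 \in K_1$ (a subset of $\{1,\dots,n\}$) and $\tau_2 \in K_2$ (a subset of $\{n+1,\dots,m\}$), and this decomposition is unique since the two index sets are disjoint. By definition $\uxa^{K_1 \ast K_2} = \bigcup_{\tau \in K_1 \ast K_2} \uxa^{\tau}$, where $\uxa^{\tau} = \prod_{i=1}^{m} Y_i$ with $Y_i = X_i$ for $i \in \tau$ and $Y_i = A_i$ otherwise.

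Next I would observe that because $\tau = \tau_1 \sqcup \tau_2$ splits across the two disjoint index blocks, the product $\prod_{i=1}^m Y_i$ factors as $\bigl(\prod_{i=1}^{n} Y_i\bigr) \times \bigl(\prod_{i=n+1}^{m} Y_i\bigr)$, where the first factor is precisely $\uxa^{\tau_1}$ (the polyhedral product face indexed by $\tau_1 \subseteq \{1,\dots,n\}$, with the $X_i$'s and $A_i$'s for $i \le n$) and the second factor is $\uxa^{\tau_2}$. Hence $\uxa^{\tau} = \uxa^{\tau_1} \times \uxa^{\tau_2}$ for each face $\tau = \tau_1 \cup \tau_2$. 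Taking the union over all $\tau \in K_1 \ast K_2$ and using that the union over $\tau$ corresponds to independently ranging $\tau_1$ over $K_1$ and $\tau_2$ over $K_2$, the distributive law $\bigl(\bigcup_{\tau_1} U_{\tau_1}\bigr) \times \bigl(\bigcup_{\tau_2} V_{\tau_2}\bigr) = \bigcup_{\tau_1, \tau_2} (U_{\tau_1} \times V_{\tau_2})$ for subsets of a product gives
\[
\uxa^{K_1 \ast K_2} = \bigcup_{\tau_1 \in K_1,\, \tau_2 \in K_2} \uxa^{\tau_1} \times \uxa^{\tau_2} = \Bigl(\bigcup_{\tau_1 \in K_1} \uxa^{\tau_1}\Bigr) \times \Bigl(\bigcup_{\tau_2 \in K_2} \uxa^{\tau_2}\Bigr) = \uxa^{K_1} \times \uxa^{K_2},
\]
which is the desired equality.

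This proof is entirely formal; there is no serious obstacle. The only point requiring a little care is the bookkeeping with index sets: one must be careful that $\uxa^{K_1}$ on the right-hand side is genuinely the polyhedral product of the pairs indexed by $\{1,\dots,n\}$ alone (not the ambient one on $[m]$), so that the factorization of each face product lines up correctly, and similarly for $\uxa^{K_2}$. Since the statement is flagged as following immediately from the definitions, I would expect the published proof to simply cite this unwinding, perhaps even omitting it; I would include the two-line argument above for completeness.
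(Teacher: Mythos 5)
Your proof is correct and is exactly the definitional unwinding the paper has in mind: the paper gives no written proof at all, stating only that the lemma follows immediately from the definition of the polyhedral product, and your face-by-face factorization $\uxa^{\tau_1\cup\tau_2}=\uxa^{\tau_1}\times\uxa^{\tau_2}$ together with the distributivity of unions over products is precisely that immediate argument.
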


For example, if $K$ is a simplicial complex on the index set $[n]$ then 
$K\ast\{n+1\}$ is the cone on $K$. Applying Lemma~\ref{joinproduct} 
we obtain the following, which will be of use later.  

\begin{corollary} 
   \label{conetype} 
   Let $K$ be a simplicial complex on the index set $[n]$. Then  
   $\uxa^{K\ast\{n+1\}}=\uxa^{K}\times X_{n+1}$. Consequently, 
   we obtain $\cxx^{K\ast\{n+1\}}=\cxx^{K}\times CX_{n+1}$.~$\qqed$  
\end{corollary}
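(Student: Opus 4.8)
The plan is to deduce both assertions directly from Lemma~\ref{joinproduct} by choosing one of the two complexes in the join to be a single vertex. First I would set $K_{1}=K$ on the index set $\{1,\ldots,n\}$ and let $K_{2}$ be the simplicial complex $\{n+1\}$ on the one-element set $\{n+1\}$, whose faces are $\emptyset$ and the vertex $\{n+1\}$. As observed in the paragraph preceding Lemma~\ref{joinproduct}, the join $K_{1}\ast K_{2}$ in this case is exactly the cone $K\ast\{n+1\}$, so Lemma~\ref{joinproduct} applies with $m=n+1$ and gives
\[\uxa^{K\ast\{n+1\}}=\uxa^{K}\times\uxa^{K_{2}}.\]

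Next I would evaluate the factor $\uxa^{K_{2}}$. Since $\{n+1\}$ is a maximal face of $K_{2}$, Corollary~\ref{maxunion} (or simply the definition of the polyhedral product applied to this face) gives $\uxa^{K_{2}}=\uxa^{\{n+1\}}=X_{n+1}$ --- the ambient space of the pair $(X_{n+1},A_{n+1})$, not the subspace $A_{n+1}$. Substituting this into the display above yields $\uxa^{K\ast\{n+1\}}=\uxa^{K}\times X_{n+1}$, which is the first claim. For the second claim I would specialize the sequence of pairs by taking $(X_{i},A_{i})=(CX_{i},X_{i})$ for every $i$; then $\uxa^{K}$ becomes $\cxx^{K}$, $\uxa^{K\ast\{n+1\}}$ becomes $\cxx^{K\ast\{n+1\}}$, and the surviving factor $X_{n+1}$ --- being the ambient space of the $(n+1)$-st pair --- becomes $CX_{n+1}$, so $\cxx^{K\ast\{n+1\}}=\cxx^{K}\times CX_{n+1}$.

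I do not expect any real obstacle: the statement is an immediate consequence of Lemma~\ref{joinproduct} together with the trivial computation of the one-vertex polyhedral product. The only point deserving a moment's attention is the final substitution, where it is natural to expect the extra factor to be $X_{n+1}$; one must keep in mind that in the notation $\cxx=(\underline{CX},\underline{X})$ the ``large'' entry of each pair is the cone, so the bookkeeping delivers $CX_{n+1}$ rather than $X_{n+1}$.
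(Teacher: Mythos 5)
Your proposal is correct and is exactly the paper's argument: the paper derives Corollary~\ref{conetype} by observing that $K\ast\{n+1\}$ is the cone on $K$ and applying Lemma~\ref{joinproduct} with the second factor a single vertex, whose polyhedral product is the ambient space $X_{n+1}$ (hence $CX_{n+1}$ in the $\cxx$ case). Your extra care with the bookkeeping of which entry of the pair survives is exactly the right point to check, and it matches the paper.
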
 

As another way in which joins will be used later, consider the 
inclusions of $\Delta^{n-1}_{n-2}$  into $\Delta^{n-1}_{n-2}\ast\{n+1\}$ 
and $\Delta^{n-1}$. If $L$ is the pushout of these two inclusions, then 
checking simplices immediately shows that $L=\Delta^{n}_{n-1}$. 

\begin{lemma} 
   \label{deltapushout} 
   There is a pushout 
   \[\diagram 
          \Delta^{n-1}_{n-2}\rto\dto & \Delta^{n-1}\dto \\ 
          \Delta^{n-1}_{n-2}\ast\{n+1\}\rto & \Delta^{n}_{n-1}. 
     \enddiagram\] 
   $\qqed$ 
\end{lemma}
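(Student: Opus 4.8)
The plan is to prove the lemma purely combinatorially, by realising the pushout as a union of subcomplexes inside a single ambient simplex. First I would embed all four complexes as subcomplexes of the full simplex $\Delta^{n}$ on the vertex set $[n+1]=\{1,\dots,n+1\}$, and record their faces explicitly: $\Delta^{n-1}$ is the full subcomplex on $[n]$, so its faces are all subsets of $[n]$; $\Delta^{n-1}_{n-2}$ consists of all subsets of $[n]$ of cardinality at most $n-1$; $\Delta^{n-1}_{n-2}\ast\{n+1\}$ consists of all $\rho\subseteq[n+1]$ with $\vert\rho\cap[n]\vert\le n-1$; and $\Delta^{n}_{n-1}$ consists of all subsets of $[n+1]$ of cardinality at most $n$. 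With these descriptions all four maps in the square are inclusions of subcomplexes.

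Next I would recall the standard fact that in the category of simplicial complexes the pushout of a span $A\hookleftarrow C\hookrightarrow B$ of subcomplex inclusions with $C=A\cap B$ is the union $A\cup B$ (the faces of the colimit are the faces of $A$ together with the faces of $B$, glued along the faces of $C$). In our situation $A=\Delta^{n-1}_{n-2}\ast\{n+1\}$, $B=\Delta^{n-1}$ and $C=\Delta^{n-1}_{n-2}$, so it remains only to verify that $C=A\cap B$ and that $A\cup B=\Delta^{n}_{n-1}$.

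For the first point: a face $\rho\in A$ lies in $B$ precisely when $\rho\subseteq[n]$, and then the condition $\vert\rho\cap[n]\vert\le n-1$ becomes $\vert\rho\vert\le n-1$, so $A\cap B$ is exactly the set of subsets of $[n]$ of cardinality at most $n-1$, namely $\Delta^{n-1}_{n-2}=C$. For the second point: a face $\rho\subseteq[n+1]$ lies in $A\cup B$ iff either $n+1\notin\rho$, in which case $\rho\subseteq[n]$ and automatically $\vert\rho\vert\le n$, or $n+1\in\rho$ and $\vert\rho\cap[n]\vert\le n-1$, in which case $\vert\rho\vert\le n$ as well; conversely any $\rho$ with $\vert\rho\vert\le n$ falls into one of these two cases. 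Hence $A\cup B$ is precisely the collection of subsets of $[n+1]$ of cardinality at most $n$, which is $\Delta^{n}_{n-1}$, and the lemma follows.

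This argument is elementary and I do not anticipate a genuine obstacle; the only places where care is needed are the bookkeeping between the dimension index of a skeleton and the cardinality of its faces (the $k$-skeleton has faces of cardinality $\le k+1$), and the verification that the intersection condition $C=A\cap B$ holds, which is what licenses computing the pushout as a union.
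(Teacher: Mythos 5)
Your proposal is correct and is precisely the "checking simplices" verification that the paper itself invokes without writing out: the paper simply observes that the pushout of the two subcomplex inclusions is the union, and that comparing faces gives $\Delta^{n}_{n-1}$. Your explicit bookkeeping of the intersection $A\cap B=\Delta^{n-1}_{n-2}$ and the union $A\cup B=\Delta^{n}_{n-1}$ fills in exactly that computation, so the approach is the same.
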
 

Applying Proposition~\ref{uxaunion} in the case of \cxx\ to the 
pushout in Lemma~\ref{deltapushout}, we obtain a homotopy equivalence 
\begin{equation} 
  \label{coneprepeqn} 
  \cxx^{\Delta^{n}_{n-1}}\simeq\cxx^{\Delta_{n-1}^{n-2}\ast\{n+1\}} 
        \cup_{\cxx^{\overline{\Delta}^{n-1}_{n-2}}}\cxx^{\overline{\Delta}^{n-1}}. 
\end{equation}  
It will be useful to state this homotopy equivalence more explicitly. 
By~(\ref{cxxfatwedge}), 
\[\cxx^{\Delta^{n-1}_{n-2}}= 
     \bigcup_{i=1}^{n} CX_{1}\times\cdots\times X_{i}\times\cdots\times CX_{n}.\]  
So by definition of $\overline{\Delta}^{n-1}_{n-2}$ we have 
\[\cxx^{\overline{\Delta}^{n-1}_{n-2}} \left(\bigcup_{i=1}^{n} 
        CX_{1}\times\cdots\times X_{i}\times\cdots\times X\times CX_{k}\right)\times X_{n+1}.\] 
As well, by the definition of the polyhedral product, we have 
\[\cxx^{\Delta^{n-1}}=CX_{1}\times\cdots\times CX_{n}.\] 
So by definition of $\overline{\Delta}^{n-1}$ we have 
\[\cxx^{\overline{\Delta}^{n-1}}=CX_{1}\times\cdots\times CX_{n}\times X_{n+1}.\] 
By Lemma~\ref{conetype}, we have 
\[\cxx^{\Delta^{n-1}_{n-2}\ast\{n+1\}}=\cxx^{\Delta^{n-1}_{n-2}}\times CX_{n+1}.\]  
Thus we obtain 
\[\cxx^{\Delta^{n-1}_{n-2}\ast\{n+1\}}=\left(\bigcup_{i=1}^{n} 
       CX_{1}\times\cdots\times X_{i}\times\cdots\times CX_{n}\right)\times CX_{n+1}.\] 
Therefore~(\ref{coneprepeqn}) states the following. 

\begin{lemma} 
   \label{explicitconepo} 
   There is a pushout 
   \[\diagram 
            \left(\bigcup_{i=1}^{n} CX_{1}\times\cdots\times X_{i}\times 
                    \cdots\times X\times CX_{n}\right)\times X_{n+1}\rto^-{b}\dto^{a} 
             & CX_{1}\times\cdots\times CX_{n}\times X_{n+1}\dto \\ 
           \left(\bigcup_{i=1}^{n} CX_{1}\times\cdots\times X_{i}\times 
                  \cdots\times CX_{n}\right)\times CX_{n+1}\rto 
               & \cxx^{\Delta^{n}_{n-1}} 
     \enddiagram\] 
   where the maps $a$ and $b$ are coordinate-wise inclusions. $\qqed$ 
\end{lemma}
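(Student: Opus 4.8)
The plan is to recognise this square as the image, under the polyhedral product functor, of the purely combinatorial pushout of Lemma~\ref{deltapushout}, and then to read off its four corners and two maps from the computations already made above. So first I would apply Corollary~\ref{uxaunioncor}, with the sequence of pairs taken to be \cxx, to the pushout
\[\Delta^{n-1}_{n-2}\ast\{n+1\}\longleftarrow\Delta^{n-1}_{n-2}\longrightarrow\Delta^{n-1}\]
of simplicial complexes on $\{1,\ldots,n+1\}$, whose pushout is $\Delta^{n}_{n-1}$. Since an inclusion $K\hookrightarrow K'$ of complexes on $n+1$ vertices induces the inclusion of subspaces $\cxx^{K}=\bigcup_{\sigma\in K}\cxx^{\sigma}\subseteq\bigcup_{\sigma\in K'}\cxx^{\sigma}=\cxx^{K'}$ of $\prod_{i=1}^{n+1}CX_{i}$, Corollary~\ref{uxaunioncor} produces a pushout square with corners $\cxx^{\overline{\Delta}^{n-1}_{n-2}}$, $\cxx^{\overline{\Delta}^{n-1}}$, $\cxx^{\Delta^{n-1}_{n-2}\ast\{n+1\}}$ and $\cxx^{\Delta^{n}_{n-1}}$, in which every map is a coordinate-wise inclusion.

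Next I would identify the three corners other than $\cxx^{\Delta^{n}_{n-1}}$ using the formulas already recorded. By Corollary~\ref{conetype}, $\cxx^{\Delta^{n-1}_{n-2}\ast\{n+1\}}=\cxx^{\Delta^{n-1}_{n-2}}\times CX_{n+1}$, and (\ref{cxxfatwedge}) rewrites $\cxx^{\Delta^{n-1}_{n-2}}$ as $\bigcup_{i=1}^{n}CX_{1}\times\cdots\times X_{i}\times\cdots\times CX_{n}$, which gives the bottom-left corner. For a complex $M$ on $\{1,\ldots,n\}$, regarded as the complex $\overline{M}$ on $\{1,\ldots,n+1\}$ as in Section~\ref{sec:ppproperties}, the vertex $n+1$ is not a face of $\overline{M}$, so by definition the $(n+1)$-st coordinate of $\cxx^{\sigma}$ is $X_{n+1}$ for every $\sigma\in M$, whence $\cxx^{\overline{M}}=\cxx^{M}\times X_{n+1}$. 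Applying this with $M=\Delta^{n-1}_{n-2}$ together with (\ref{cxxfatwedge}), and with $M=\Delta^{n-1}$ together with the definition $\cxx^{\Delta^{n-1}}=CX_{1}\times\cdots\times CX_{n}$, yields exactly the top-left and top-right corners of the stated square.

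Finally I would check that the two structure maps are the coordinate-wise inclusions $a$ and $b$. The map $a$ is induced by $\overline{\Delta}^{n-1}_{n-2}\hookrightarrow\Delta^{n-1}_{n-2}\ast\{n+1\}$, an inclusion of complexes that only allows the new vertex $n+1$ to be adjoined to existing faces; on polyhedral products this fixes the first $n$ coordinates and replaces the last factor $X_{n+1}$ by $CX_{n+1}$, so $a=1\times\cdots\times1\times(X_{n+1}\hookrightarrow CX_{n+1})$. The map $b$ is induced by $\overline{\Delta}^{n-1}_{n-2}\hookrightarrow\overline{\Delta}^{n-1}$, which fills in all the facets $\bar{\sigma}_{i}$; on polyhedral products this replaces each $X_{i}$ by $CX_{i}$ for $1\le i\le n$ and fixes the $X_{n+1}$ factor, so $b$ is again a coordinate-wise inclusion. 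This identifies the square given by Corollary~\ref{uxaunioncor} with the one in the statement. There is no real obstacle here: the whole content is bookkeeping already carried out in the paragraph preceding the lemma, and the only point deserving a line of care is that the maps in that pushout are genuinely the naive coordinate-wise inclusions, which is immediate from the definition of $\uxa^{\sigma}$ and of the map on polyhedral products induced by an inclusion of simplicial complexes.
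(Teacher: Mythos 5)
Your proposal is correct and follows essentially the same route as the paper: the paper likewise obtains the square by applying Proposition~\ref{uxaunion} (equivalently Corollary~\ref{uxaunioncor}) to the pushout of Lemma~\ref{deltapushout} and then identifies the three known corners via~(\ref{cxxfatwedge}), Corollary~\ref{conetype}, and the definition of $\cxx^{\Delta^{n-1}}$, observing that the structure maps are coordinate-wise inclusions. Your extra care in checking that the induced maps really are the naive coordinate-wise inclusions is a harmless elaboration of what the paper leaves implicit.
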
 
 
Note that this pushout identifies $\cxx^{\Delta^{n}_{n-1}}$ as 
$\bigcup_{i=1}^{n+1} CX_{1}\times\cdots\times X_{i}\times\cdots\times CX_{n+1}$, 
which matches the description in~(\ref{cxxfatwedge}). Since $a$ is a 
coordinate-wise inclusion and $CX_{n+1}$ is contractible, $a$~is homotopic 
to the composite 
\[\overline{\pi}_{1}\colon\namedright{\left(\bigcup_{i=1}^{n} 
        CX_{1}\times\cdots\times X_{i}\times\cdots\times X\times CX_{n}\right)\times X_{n+1}} 
        {\pi_{1}}{\bigcup_{i=1}^{n} CX_{1}\times\cdots\times X_{i}\times\cdots\times CX_{n}}\]  
\[\hspace{5cm}\stackrel{i_{1}}{\longrightarrow} 
       \left(\bigcup_{i=1}^{n} CX_{1}\times\cdots\times X_{i}\times\cdots\times CX_{n} 
       \right)\times CX_{n+1}\] 
where $\pi_{1}$ is the projection and $i_{1}$ is the inclusion. Similarly, 
since~$b$ is a coordinate-wise inclusion and $CX_{1}\times\cdots\times CX_{n}$ 
is contractible, $b$ is homotopic to the composite 
\[\overline{\pi}_{2}\colon\nameddright{\left(\bigcup_{i=1}^{n} 
        CX_{1}\times\cdots\times X_{i}\times\cdots\times X\times CX_{n}\right)\times X_{n+1}} 
        {\pi_{2}}{X_{n+1}}{i_{2}}{CX_{1}\times\cdots\times CX_{n}\times X_{n+1}}\] 
where $\pi_{2}$ is the projection and $i_{2}$ is the inclusion. 

The pushout in Lemma~\ref{explicitconepo} and the description of 
the maps $a$ and $b$ play a key role in helping to identify the homotopy 
types of certain $\cxx^{K}$'s. The statement we are aiming for is 
Theorem~\ref{conefillpo}. We first need a preliminary lemma which identifies  
the homotopy type of a certain pushout. For a product $\prod_{i=1}^{n} X_{i}$, let 
\(\pi_{j}\colon\namedright{\prod_{i=1}^{n} X_{i}}{}{X_{j}}\) 
be the projection onto the $j^{th}$-factor. 

\begin{lemma} 
   \label{polemma} 
   Suppose there is a homotopy pushout 
   \[\diagram 
            A\times B\times C\rto^-{\pi_{2}\times\pi_{3}}\dto^{f} 
                    & B\times C\dto \\ 
            P\rto & Q   
     \enddiagram\] 
   where $f$ factors as the composite 
   \(\llnamedddright{A\times B\times C}{\pi_{1}\times\pi_{3}}{A\times C} 
        {}{A\rtimes C}{f^{\prime}}{P}\). 
   Then there is a homotopy equivalence 
   \[Q\simeq D\vee [(A\ast B)\rtimes C]\] 
   where $D$ is the cofibre of $f^{\prime}$. 
\end{lemma}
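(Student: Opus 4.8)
The plan is to analyze the homotopy pushout by exploiting the factorization of $f$ through the half-smash $A\rtimes C$, together with a standard cofibration/pushout manipulation. First I would recall that for a based space $Y$ and a space $Z$, the half-smash $Y\rtimes Z=(Y\times Z)/(\ast\times Z)$, and that for well-pointed spaces there is a homotopy cofibration $\namedright{Z}{}{Y\times Z}\longrightarrow Y\rtimes Z$; more relevantly, $Y\times Z\simeq (Y\rtimes Z)\vee Z$ after one realizes the projection $\namedright{Y\times Z}{\pi_{2}}{Z}$ splits off a copy of $Z$, so $Y\times Z\simeq (Y\rtimes Z)\vee Z$ is not quite right in general but the cofibre of $\namedright{Z}{}{Y\times Z}$ is $Y\rtimes Z$. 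The key observation is that the top map $\pi_{2}\times\pi_{3}\colon\namedright{A\times B\times C}{}{B\times C}$ and the left map $f$ both "forget" information in a controlled way, and the given factorization of $f$ says that $f$ factors through the quotient $A\times B\times C\to A\rtimes C$ (collapsing $B$ entirely and collapsing the basepoint of $A$ across $C$).

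The main step is to replace the pushout by a homotopy-equivalent one in which the maps are cofibrations or projections. I would first use that $f\simeq f'\circ q$ where $q\colon\namedright{A\times B\times C}{}{A\rtimes C}$ is the evident quotient-type map (the composite $\pi_1\times\pi_3$ followed by the quotient $A\times C\to A\rtimes C$). Then I would factor the pushout square as two pushout squares stacked vertically (or horizontally), introducing $A\rtimes C$ as an intermediate object: the bottom square has left map $f'$ with cofibre $D$, and the top square is a pushout along $q$ and $\pi_2\times\pi_3$. By the pasting lemma for homotopy pushouts, $Q$ is the pushout of $\namedright{B\times C}{}{R}\longleftarrow P$ where $R$ is the pushout of $\llnameddright{A\times B\times C}{q}{A\rtimes C}{}{}$ and $\namedright{A\times B\times C}{\pi_2\times\pi_3}{B\times C}$. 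I expect $R$ itself to be identifiable: since the two maps out of $A\times B\times C$ together collapse $A\times B\times C$ onto $(A\rtimes C)$ on one side and $(B\times C)$ on the other, with the "overlap" being $C$, the pushout $R$ should be $(A\ast B)\rtimes C$ — this is precisely the kind of identity underlying Porter's result and~(\ref{cxxfatwedge}), namely that $CA\times B\cup A\times CB\simeq \Sigma(A\wedge B)=A\ast B$, here done fibrewise over $C$ with an extra half-smash twist.

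Granting $R\simeq (A\ast B)\rtimes C$, the final square becomes the pushout of $\namedright{B\times C}{}{(A\ast B)\rtimes C}\longleftarrow R\leftarrow \cdots$; more precisely, after the first pushout, the remaining pushout computes $Q$ as the mapping cone construction gluing $P$ to this intermediate space, and the map $\namedright{A\rtimes C}{}{(A\ast B)\rtimes C}$ that arises is null homotopic (because it factors through $A\rtimes C\to CA\rtimes C\simeq\ast$-ish, using contractibility of the cone coordinate exactly as $a$ and $\overline{\pi}_1$ were analyzed after Lemma~\ref{explicitconepo}). A pushout along a null homotopic map splits as a wedge: $Q\simeq D\vee[(A\ast B)\rtimes C]$, where $D=\colim(\llnameddright{(A\ast B)\rtimes C}{}{A\rtimes C}{f'}{P})$ collapses to the cofibre of $f'$ since the left-hand map is null. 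I would verify the bookkeeping of which maps become null by tracking the cone coordinates coming from the $A\ast B$ factor, mirroring the $\overline{\pi}_1,\overline{\pi}_2$ discussion preceding this lemma.

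The hardest part will be the precise identification $R\simeq (A\ast B)\rtimes C$ together with a clean description of the induced map $\namedright{A\rtimes C}{}{(A\ast B)\rtimes C}$ and why it is null homotopic; this requires being careful about basepoints and about the half-smash (the $C$-coordinate is unbased and must be carried along everywhere), and about the fact that the "join in the first two coordinates, half-smash with the third" picture is exactly what the double-pushout produces. Once that is pinned down, the splitting $Q\simeq D\vee[(A\ast B)\rtimes C]$ and the identification of $D$ as the cofibre of $f'$ follow formally from the fact that a homotopy pushout of a null homotopic map with a cofibration yields a wedge of the cofibre with the target of the null map. I would keep the argument at the level of homotopy pushouts and the pasting lemma throughout, avoiding point-set models except to name the quotient maps.
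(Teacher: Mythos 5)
Your argument is correct and is essentially the paper's proof: both factor the square through an intermediate object supplied by the hypothesis on $f$, identify the auxiliary pushout over $A\times B\times C$ as a join (the paper gets $(A\ast B)\times C$ via $A\times C$ and then pinches out $C$; you get $(A\ast B)\rtimes C$ directly via $A\rtimes C$), observe that the resulting attaching map out of $A\rtimes C$ is null homotopic, and then split the final pushout as the wedge of $(A\ast B)\rtimes C$ with the cofibre of $f'$. The point you flag as hardest --- that $R\simeq (A\ast B)\rtimes C$ with null induced map --- is exactly what the paper's two-step factorization through $A\times C$ establishes, using that the pushout of the two projections off $A\times B$ is $A\ast B$ with null structure maps and that pushouts commute with $-\times C$.
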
 

\begin{proof} 
First we recall two general facts. First, the pushout of the projections 
\(\namedright{X\times Y}{\pi_{1}}{X}\) 
and 
\(\namedright{X\times Y}{\pi_{2}}{Y}\) 
is homotopy equivalent $X\ast Y$, and the map from each of $X$ and $Y$ 
into $X\ast Y$ is null homotopic. Second, if $Q$ is the pushout of maps 
\(\namedright{X}{a}{Y}\) 
and 
\(\namedright{X}{b}{Z}\) 
then, for any space $T$, the pushout of 
\(\namedright{X\times T}{a\times 1}{Y\times T}\) 
and 
\(\namedright{X\times T}{b\times 1}{Z\times T}\) 
is $Q\times T$. 

In our case, since $f$ factors through the projection onto $A\times C$, 
there is a diagram of iterated homotopy pushouts 
\[\diagram 
       A\times B\times C\rto^-{\pi_{2}\times\pi_{3}}\dto^{\pi_{1}\times\pi_{3}} 
             & B\times C\dto \\ 
       A\times C\rto^-{g}\dto^{\bar{f}} & R\dto \\ 
       P\rto & Q 
  \enddiagram\] 
which defines the space $R$ and the map $g$. Observe that the 
top square is the product of $C$ with the pushout of the projections 
\(\namedright{A\times B}{\pi_{1}}{A}\) 
and 
\(\namedright{A\times B}{\pi_{2}}{B}\). 
Thus $R\simeq(A\ast B)\times C$ and $g\simeq\ast\times 1$. 
The identification of $R$ and $g$ lets us write the bottom pushout above 
as a diagram of iterated homotopy pushouts 
\[\diagram 
       A\times C\rto^-{\pi_{2}}\dto & C\rto^-{1}\dto 
             & (A\ast B)\times C\dto \\ 
       P\rto & Q^{\prime}\rto & Q. 
  \enddiagram\] 
By hypothesis, the restriction of 
\(\namedright{A\times C}{}{P}\) 
to $C$ is null homotopic. Thus we can pinch out $C$ in the previous 
diagram to obtain a diagram of iterated homotopy pushouts 
\[\diagram 
       A\rtimes C\rto^{f^{\prime}}\dto & \ast\rto\dto & (A\ast B)\rtimes C\dto \\ 
       P\rto & D\rto & Q.  
  \enddiagram\]
The left pushout implies that $D$ is the homotopy cofibre of $f^{\prime}$, 
and the right pushout immediately implies that 
$Q\simeq D\vee [(A\ast B)\rtimes C]$. 
\end{proof} 

Let $K$ be a simplicial complex on the index set $[n]$ and suppose 
that $K=K_{1}\cup_{\partial\sigma}\sigma$ for some simplicial 
complex $K_{1}$ and simplex $\sigma$. We consider cases where 
the inclusion of $\sigma$ into $K_{1}$ factors through a cone on 
$\sigma$, and use this to help identify the homotopy type 
of $\cxx^{K}$. To be concrete, we need some notation. 
For a sequence $(i_{1},\ldots,i_{k})$ with $1\leq i_{1}<\cdots <i_{k}\leq n$, 
let $\Delta^{i_{1},\ldots,i_{k}}$ be the full $(k-1)$-dimensional 
simplex on the vertices $\{i_{1},\ldots,i_{k}\}$. Let 
$\Delta^{i_{1},\ldots,i_{k}}_{k-2}$ be the full $(k-2)$-skeleton 
of $\Delta^{i_{1},\ldots,i_{k}}$. To match a later application of 
Theorem~\ref{conefillpo} in Section~\ref{sec:shifted}, we will assume 
that in $K=K_{1}\cup_{\partial\sigma}\sigma$ we have 
$\sigma=(i_{1},\ldots,i_{k})$ and $i_{1}\geq 2$. Observe that the 
inclusion of~$\partial\sigma$ into $K_{1}$ induces a map 
\(\namedright{\cxx^{\partial\sigma}}{}{\cxx^{K_{1}}}\). 
By Proposition~\ref{join}, there is a homotopy equivalence 
$\cxx^{\partial\sigma}\simeq X_{i_{1}}\ast\cdots\ast X_{i_{k}}$,  
so we obtain a map 
\(\namedright{X_{i_{1}}\ast\cdots\ast X_{i_{k}}}{}{\cxx^{K_{1}}}\). 
Let $(j_{1},\ldots,j_{n-k-1})$ be the 
complement of $(1,i_{1},\ldots,i_{k})$ in $(1,\ldots,n)$, and assume that 
$j_{1}<\cdots <j_{n-k-1}$. Let $\widehat{X}=\prod_{t=1}^{n-k-1} X_{j_{t}}$. 

\begin{theorem} 
   \label{conefillpo} 
   Let $K$ be a simplicial complex on the index set $[n]$. Suppose that  
   $K=K_{1}\cup_{\partial\sigma}\sigma$ where: 
   \begin{letterlist} 
      \item for $1\leq i\leq n$, the vertex $\{i\}\in K_{1}$; 
      \item $\sigma=(i_{1},\ldots,i_{k})$ for $2\leq i_{1}<\cdots <i_{k}\leq n$; 
      \item $\sigma\notin K_{1}$;  
      \item $(1)\ast\partial\sigma\subseteq K_{1}$. 
   \end{letterlist}  
   Then there is a homotopy equivalence 
   \[\cxx^{K}\simeq D\vee 
        [((X_{i_{1}}\ast\cdots\ast X_{i_{k}})\ast X_{1})\rtimes\widehat{X}]\] 
   where $D$ is the cofibre of the map 
   \(\namedright{(X_{i_{1}}\ast\cdots\ast X_{i_{k}})\rtimes\widehat{X} 
        \simeq\cxx^{\partial\sigma}\rtimes\widehat{X}} 
          {}{\cxx^{K_{1}}}\). 
\end{theorem}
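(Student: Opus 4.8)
The plan is to realize $\cxx^{K}$ as a homotopy pushout and apply Lemma~\ref{polemma}. First I would set up the pushout. By~(c) $\sigma\notin K_{1}$, while by~(d) every proper face of $\sigma$ lies in $(1)\ast\partial\sigma\subseteq K_{1}$; hence the intersection of $K_{1}$ with the full simplex $\sigma$ is exactly $\partial\sigma$, so $K=K_{1}\cup_{\partial\sigma}\sigma$ is a genuine pushout of simplicial complexes on $[n]$. Applying Proposition~\ref{uxaunion} in the case of \cxx\ (the relevant inclusion of polyhedral products being a cofibration, so the square is a homotopy pushout) gives a homotopy pushout
\[\diagram
    \cxx^{\partial\sigma}\times X_{1}\times\widehat{X}\rto\dto^{f} & CX_{i_{1}}\times\cdots\times CX_{i_{k}}\times X_{1}\times\widehat{X}\dto \\
    \cxx^{K_{1}}\rto & \cxx^{K}
  \enddiagram\]
where Lemma~\ref{joinproduct} and the definition of the polyhedral product identify $\cxx^{\sigma}$ (regarded on $[n]$) with $CX_{i_{1}}\times\cdots\times CX_{i_{k}}\times X_{1}\times\widehat{X}$ and $\cxx^{\partial\sigma}$ (on $[n]$) with $\cxx^{\partial\sigma}\times X_{1}\times\widehat{X}$, the latter $\cxx^{\partial\sigma}$ being taken over the $k$ vertices of $\sigma$; by Proposition~\ref{join}, $\cxx^{\partial\sigma}\simeq X_{i_{1}}\ast\cdots\ast X_{i_{k}}$. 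The top map is a coordinate-wise inclusion whose target deformation retracts onto $X_{1}\times\widehat{X}$ through the cone coordinates, so, exactly as for the map $b$ preceding Lemma~\ref{explicitconepo}, it is homotopic to the projection $\cxx^{\partial\sigma}\times X_{1}\times\widehat{X}\to X_{1}\times\widehat{X}$ followed by an inclusion; replacing the top-right corner by the homotopy equivalent $X_{1}\times\widehat{X}$ leaves the homotopy pushout unchanged.

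Next I would analyse $f$, which is induced by $\partial\sigma\hookrightarrow K_{1}$. By~(d) it factors through $\cxx^{(1)\ast\partial\sigma}$ (on $[n]$), which by Lemma~\ref{joinproduct} is $CX_{1}\times\cxx^{\partial\sigma}\times\widehat{X}$, and the inclusion $\cxx^{\partial\sigma}\times X_{1}\times\widehat{X}\hookrightarrow CX_{1}\times\cxx^{\partial\sigma}\times\widehat{X}$ is the inclusion $X_{1}\hookrightarrow CX_{1}$ in the relevant coordinate. Since $CX_{1}$ is contractible, $f$ is homotopic to the composite $\cxx^{\partial\sigma}\times X_{1}\times\widehat{X}\xrightarrow{\pi}\cxx^{\partial\sigma}\times\widehat{X}\xrightarrow{h}\cxx^{K_{1}}$, where $\pi$ forgets the $X_{1}$-coordinate and $h$ is the basepoint inclusion $\cxx^{\partial\sigma}\times\widehat{X}\to CX_{1}\times\cxx^{\partial\sigma}\times\widehat{X}$ followed by the map to $\cxx^{K_{1}}$. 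The restriction of $h$ to $\widehat{X}$ equals the restriction to $\widehat{X}$ of the map $\cxx^{(1)\ast\partial\sigma}\times\widehat{X}\to\cxx^{K_{1}}$, which is null homotopic by Proposition~\ref{projincl} applied with ambient complex $K_{1}$ and subcomplex $(1)\ast\partial\sigma$ (whose complementary vertices lie in $K_{1}$ by~(a)). Hence $h$ factors up to homotopy as $\cxx^{\partial\sigma}\times\widehat{X}\to\cxx^{\partial\sigma}\rtimes\widehat{X}\xrightarrow{f'}\cxx^{K_{1}}$.

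Finally I would invoke Lemma~\ref{polemma} with $A=\cxx^{\partial\sigma}$, $B=X_{1}$, $C=\widehat{X}$: by the previous two paragraphs the top map of the pushout is $\pi_{2}\times\pi_{3}$ and $f$ factors as $A\times B\times C\xrightarrow{\pi_{1}\times\pi_{3}}A\times C\to A\rtimes C\xrightarrow{f'}\cxx^{K_{1}}$, so the lemma yields $\cxx^{K}\simeq D\vee[(\cxx^{\partial\sigma}\ast X_{1})\rtimes\widehat{X}]$ with $D$ the homotopy cofibre of $f'$. Substituting the equivalence $\cxx^{\partial\sigma}\simeq X_{i_{1}}\ast\cdots\ast X_{i_{k}}$ of Proposition~\ref{join} (permissible since join and half-smash preserve homotopy equivalences) produces the asserted decomposition and identifies $f'$ with the stated map $(X_{i_{1}}\ast\cdots\ast X_{i_{k}})\rtimes\widehat{X}\to\cxx^{K_{1}}$.

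The step I expect to be the main obstacle is the third paragraph: one must check that the two factorizations of $f$ — through the projection forgetting the $X_{1}$-coordinate (from~(d) and contractibility of $CX_{1}$) and through the half-smash with $\widehat{X}$ (from Proposition~\ref{projincl}) — can be carried out compatibly, giving precisely the factorization $A\times B\times C\xrightarrow{\pi_{1}\times\pi_{3}}A\times C\to A\rtimes C\xrightarrow{f'}P$ demanded by Lemma~\ref{polemma}, rather than merely some factorization. Everything else is pushout bookkeeping of the kind already developed in Sections~\ref{sec:ppproperties} and~\ref{sec:condition}.
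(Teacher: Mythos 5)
Your proposal is correct and follows essentially the same route as the paper: realize $\cxx^{K}$ as the homotopy pushout coming from $K=K_{1}\cup_{\partial\sigma}\sigma$ via Proposition~\ref{uxaunion}, identify the top map as a projection onto $X_{1}\times\widehat{X}$ using the cone coordinates, factor the left map through $(1)\ast\partial\sigma$ and then through the half-smash $\cxx^{\partial\sigma}\rtimes\widehat{X}$ via Proposition~\ref{projincl}, and finish with Lemma~\ref{polemma}. The only cosmetic difference is that the paper routes the argument through the intermediate complex $L=(1)\ast\partial\sigma\cup_{\partial\sigma}\sigma$ and the explicit pushout of Lemma~\ref{explicitconepo}, whereas you verify the factorization of $f$ directly; the compatibility issue you flag in your last paragraph is resolved exactly as you suggest and is not a gap.
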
 

\begin{proof} 
Since the inclusion 
\(\namedright{\partial\sigma}{}{K_{1}}\) 
factors as the composite 
\(\nameddright{\partial\sigma}{}{(1)\ast\partial\sigma}{}{K_{1}}\), 
we obtain an iterated pushout diagram 
\[\diagram 
         \partial\sigma\rto\dto & \sigma\dto \\ 
         (1)\ast\partial\sigma\rto\dto & L\dto \\ 
         K_{1}\rto & K 
  \enddiagram\] 
which defines the simplicial complex $L$. Proposition~\ref{uxaunion} 
therefore implies there is an iterated pushout diagram 
\begin{equation} 
  \label{K1Kpo} 
  \diagram 
          \cxx^{\overline{\partial\sigma}}\rto\dto 
                 & \cxx^{\overline{\sigma}}\dto \\ 
          \cxx^{\overline{(1)\ast\partial\sigma}}\rto\dto 
                 & \cxx^{\overline{L}}\dto \\ 
          \cxx^{K_{1}}\rto & \cxx^{K}. 
  \enddiagram 
\end{equation}  
where the bar over each of $\partial\sigma, \sigma, (1)\ast\sigma$ and $L$ 
means they are to be regarded as simplicial complexes on the index set $[n]$. 

By hypothesis, $\sigma=(i_{1},\ldots,i_{k})$, so 
$\sigma=\Delta^{i_{1},\ldots,i_{k}}_{k-2}$. The pushout defining $L$ 
therefore implies that $L=\Delta^{k}_{1,i_{1},\ldots,i_{k}}$. 
Now, arguing in the same way that produced the diagram in 
Lemma~\ref{explicitconepo}, an explicit description of the 
upper pushout in~(\ref{K1Kpo}) is as follows:  
\begin{equation} 
   \label{conepogeneralized} 
   \diagram 
         \left(\bigcup_{j=1}^{k} CX_{i_{1}}\times\cdots\times X_{i_{j}}\times 
                \cdots\times X\times CX_{i_{k}}\right)\times X_{1}\times\widehat{X} 
                \rto^-{b}\dto^{a} 
           & CX_{i_{1}}\times\cdots\times CX_{i_{k}}\times X_{1}\times\widehat{X}\dto \\ 
        \left(\bigcup_{j=1}^{k} CX_{i_{1}}\times\cdots\times X_{i_{j}}\times 
               \cdots\times CX_{i_{k}}\right)\times CX_{1}\times\widehat{X}\rto 
            & \cxx^{\Delta^{1,i_{1},\ldots,i_{k}}_{k-1}}\times\widehat{X}.  
  \enddiagram 
\end{equation} 
Note that, rearranging the indices, (\ref{conepogeneralized})~ is just the 
product of a pushout as in Lemma~\ref{explicitconepo} with $\widehat{X}$. 
As well, as noted after Lemma~\ref{explicitconepo}, up to homotopy, 
$a$ factors through the projection onto 
$\left(\bigcup_{j=1}^{k} CX_{i_{1}}\times\cdots\times X_{i_{j}}\times 
                \cdots\times X\times CX_{i_{k}})\right)\times\widehat{X}$ 
and $b$ factors through the projection onto $X_{1}\times\widehat{X}$.  
By Proposition~\ref{join}, there are homotopy equivalences 
\[\left(\bigcup_{j=1}^{k} CX_{i_{1}}\times\cdots\times X_{i_{j}}\times 
        \cdots\times X\times CX_{i_{k}}\right)\simeq X_{i_{1}}\ast\cdots\ast X_{i_{k}}\]  
and 
\[\cxx^{\Delta^{1,i_{1},\ldots,i_{k}}_{k-2}}\simeq 
         X_{1}\ast X_{i_{1}}\ast\cdots\ast X_{i_{k}}.\] 
Thus, up to homotopy equivalences, (\ref{conepogeneralized})~is equivalent 
to the homotopy pushout 
\[\diagram 
         (X_{i_{1}}\ast\cdots\ast X_{i_{k}})\times X_{1}\times\widehat{X} 
                  \rto^-{\mbox{proj}}\dto^{\mbox{proj}} 
             & X_{1}\times\widehat{X}\dto \\ 
        (X_{i_{1}}\ast\cdots\ast X_{i_{k}})\times\widehat{X}\rto 
             & (X_{1}\ast X_{i_{1}}\ast\cdots\ast X_{i_{k}})\times\widehat{X}. 
  \enddiagram\] 
Therefore, up to homotopy equivalences, (\ref{K1Kpo})~is equivalent 
to the iterated homotopy pushout diagram 
\[\diagram 
         (X_{i_{1}}\ast\cdots\ast X_{i_{k}})\times X_{1}\times\widehat{X} 
                  \rto^-{\mbox{proj}}\dto^{\mbox{proj}} 
             & X_{1}\times\widehat{X}\dto \\ 
        (X_{i_{1}}\ast\cdots\ast X_{i_{k}})\times\widehat{X}\rto\dto 
             & (X_{1}\ast X_{i_{1}}\ast\cdots\ast X_{i_{k}})\times\widehat{X}\dto \\ 
        \cxx^{K_{1}}\rto & \cxx^{K}.  
  \enddiagram\] 
By hypothesis, each vertex $\{i\}\in K_{1}$ for $1\leq i\leq n$, 
so Proposition~\ref{projincl} implies that the restriction of  
\(\namedright{(X_{i_{1}}\ast\cdots\ast X_{i_{k}})\times\widehat{X}}{}{\cxx^{K_{1}}}\) 
to $\widehat{X}$ is null homotopic. Thus the outer perimeter of the previous 
diagram is a homotopy pushout 
\[\diagram 
         (X_{i_{1}}\ast\cdots\ast X_{i_{k}})\times X_{1}\times\widehat{X} 
                  \rto^-{\mbox{proj}}\dto^{f} 
             & X_{1}\times\widehat{X}\dto \\ 
        \cxx^{K_{1}}\rto & \cxx^{K} 
  \enddiagram\] 
where $f$ factors as the composite 
\(\llnamedddright{(X_{i_{1}}\ast\cdots\ast X_{i_{k}})\times X_{1}\times\widehat{X}} 
       {\pi_{1}\times\pi_{3}}{(X_{i_{1}}\ast\cdots\ast X_{i_{k}})\times\widehat{X}} 
       {}{(X_{i_{1}}\ast\cdots\ast X_{i_{k}})\rtimes\widehat{X}}{f^{\prime}}{\cxx^{K_{1}}}\). 
Lemma~\ref{polemma} therefore implies that 
\[\cxx^{K}\simeq D\vee 
     [((X_{i_{1}}\ast\cdots\ast X_{i_{k}})\ast X_{1})\rtimes\widehat{X}],\] 
where $D$ is the cofiber of the map 
   \(\namedright{(X_{i_{1}}\ast\cdots\ast X_{i_{k}})\rtimes\widehat{X} 
        \simeq\cxx^{\partial\sigma}\rtimes\widehat{X}} 
          {f^{\prime}}{\cxx^{K_{1}}}\). 
\end{proof}

\section{Polyhedral products for shifted complexes} 
\label{sec:shifted} 

In this section we prove Theorem~\ref{cxxshifted}. To begin, 
we introduce some definitions which are standard in combinatorics. 

\begin{definition} 
Let $K$ be a simplicial complex on $n$ vertices. The complex $K$ is 
\emph{shifted} if there is an ordering on its vertices such that whenever 
$\sigma\in K$ and $\nu^{\prime}<\nu$, then 
$(\sigma-\nu)\cup\nu^{\prime}\in K$. 
\end{definition} 

It may be helpful to interpret this definition in terms of ordered sequences. 
Let $K$ be a simplicial complex on $[n]$ and order the vertices by their 
integer labels. If $\sigma\in K$ with vertices $\{i_{1},\ldots,i_{k}\}$ 
where $1\leq i_{1}<\cdots,i_{k}\leq n$, then regard $\sigma$ as the 
ordered sequence $(i_{1},\ldots,i_{k})$.  The shifted condition 
states that if $\sigma=(i_{1},\ldots,_{k})\in K$ then $K$ contains 
every simplex $(t_{1},\ldots,t_{l})$ with $l\leq k$ and 
$t_{1}\leq i_{1},\ldots,t_{l}\leq i_{l}$. 

\begin{examples} 
   \label{shiftedexamples}  
   We give three examples. 
   \begin{enumerate} 
      \item[(1)] Let $K$ be the simplicial complex with vertices $\{1,2,3,4\}$ 
                      and edges $\{(1,2),(1,3),(1,4),(2,4)\}$. That is, $K$ is two 
                      copies of $\Delta^{2}_{1}$ glued along a common edge. 
                      Then $K$ is shifted. 
      \item[(2)] Let $K$ be the simplicial complex with vertices $\{1,2,3,4\}$ 
                      and edges $\{(1,2),(2,3),(3,4),(1,4)\}$. That is, $K$ is the 
                      boundary of a square. Then $K$ is not shifted. 
      \item[(3)] For $0\leq k\leq n-1$, the full $k$-skeleton of $\Delta^{n}$ 
                      is shifted. 
   \end{enumerate} 
\end{examples}  

\begin{definition} 
Let $K$ be a simplicial complex. The \emph{rest}, \emph{star} and \emph{link} 
of a simplex $\sigma\in K$ are the subcomplexes 
\[\begin{array}{lcl} 
     \starr_{K}\sigma & = & \{\tau\in K\mid\sigma\cup\tau\in K\}; \\ 
      \rest_{K}\sigma & = 
          & \{\tau\in K\mid\sigma\cap\tau=\emptyset\}; \\ 
     \link_{K}\sigma & = & \starr_{K}\sigma\cap\rest_{K}\sigma. 
   \end{array}\] 
\end{definition} 

There are three standard facts that follow straight from the definitions. 
First, there is a pushout 
\[\diagram 
         \link_{K}\sigma\rto\dto & \rest_{K}\sigma\dto \\ 
         \starr_{K}\sigma\rto & K. 
  \enddiagram\] 
Second, if $K$ is shifted then so are $\rest_{K}\sigma$, $\starr_{K}\sigma$ 
and $\link_{K}\sigma$ for each $\sigma\in K$. Third, $\starr_{K}\sigma$ 
is a join: $\starr_{K}\sigma=\sigma\ast\link_{K}\sigma$. 

For $K$ a simplicial complex on $[n]$ and $\sigma$ being a 
vertex $(i)$, we write $\rest\{1,\ldots,\hat{i},\ldots,n\}$, 
$\starr(i)$ and $\link(i)$ for $\rest_{K}\sigma$, $\starr_{K}\sigma$ 
and $\link_{K}\sigma$. To illustrate, take $i_{1}=1$. Then 
$\starr(1)$ consists of those simplices $(i_{1},\ldots,i_{k})\in K$ 
with $i_{1}=1$; $\rest\{2,\ldots,n\}$ consists 
of those simplices $(j_{1},\ldots,j_{k})\in K$ with $j_{1}>1$, 
and $\link(1)=\starr(1)\cap\rest\{2,\ldots,n\}$. The three useful 
facts mentioned above become the following. First, there is a pushout  
\[\diagram 
       \link(1)\rto\dto & \rest\{2,\ldots,n\}\dto \\ 
       \starr(1)\rto & K. 
  \enddiagram\] 
Second, if $K$ is shifted then so are $\rest\{2,\ldots,n\}$, 
$\starr(1)$ and $\link(1)$. Third, $\starr(1)$ is a join: 
$\starr(1)=(1)\ast\link(1)$. 

Next, we require four lemmas to prepare for the proof 
of Theorem~\ref{cxxshifted}. The first two are about shifted complexes, 
and the next two are about decompositions. 

\begin{lemma} 
   \label{shifteddelta} 
   Let $K$ be a shifted complex on the index set $[n]$. 
   If $\sigma\in\rest\{2,\ldots,n\}$, 
   then $\partial\sigma\in\link(1)$. 
\end{lemma}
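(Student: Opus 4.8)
The plan is to reduce the statement to checking the maximal faces of $\partial\sigma$ and then invoke the ordered-sequence form of the shifted condition recorded just above. First I would note the elementary fact that $\link(1)=\starr(1)\cap\rest\{2,\ldots,n\}$ is a simplicial complex, and that a simplex $\tau$ belongs to it exactly when $\tau\in K$, $1\notin\tau$, and $(1)\cup\tau\in K$. Since $\link(1)$ is closed under passing to faces, to prove $\partial\sigma\subseteq\link(1)$ it is enough to show that each maximal face of $\partial\sigma$ lies in $\link(1)$.

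Next I would write $\sigma=(i_1,\ldots,i_k)$ with $2\le i_1<\cdots<i_k\le n$; this is legitimate because $\sigma\in\rest\{2,\ldots,n\}\subseteq K$, so $\sigma$ omits the vertex $1$. The maximal faces of $\partial\sigma$ are the sequences $\tau_j=(i_1,\ldots,\widehat{i_j},\ldots,i_k)$ for $1\le j\le k$, where $\widehat{i_j}$ means delete $i_j$. Each $\tau_j$ is a face of $\sigma\in K$, hence lies in $K$, and omits the vertex $1$, so $\tau_j\in\rest\{2,\ldots,n\}$. Thus the only thing left to check is that $(1)\cup\tau_j\in K$, which places $\tau_j$ in $\starr(1)$.

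To obtain $(1)\cup\tau_j\in K$ I would apply the shifted condition to $\sigma$ together with the ordered sequence $\rho_j=(1,i_1,\ldots,i_{j-1},i_{j+1},\ldots,i_k)$. This sequence has exactly $k$ entries (one vertex removed, one inserted, and $1<i_1$ keeps it strictly increasing), and, comparing entry by entry, its $m$-th entry is at most the $m$-th entry $i_m$ of $\sigma$: the first entry is $1\le i_1$; the entries in positions $2,\ldots,j$ are $i_1,\ldots,i_{j-1}$, each strictly below $i_2,\ldots,i_j$ respectively; and the entries in positions $j+1,\ldots,k$ coincide with those of $\sigma$. Since $\sigma\in K$ and $K$ is shifted, this forces $\rho_j=(1)\cup\tau_j\in K$, whence $\tau_j\in\link(1)$; as $\link(1)$ is a subcomplex it then contains $\partial\sigma$. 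The only point requiring care is this entry-by-entry comparison, which is a short reindexing; once it is in place the shifted hypothesis finishes the argument, so I do not expect any real obstacle here.
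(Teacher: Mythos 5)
Your proof is correct and follows essentially the same route as the paper's: reduce to the codimension-one faces $\tau_j=(i_1,\ldots,\widehat{i_j},\ldots,i_k)$ of $\partial\sigma$, form the sequence $(1,i_1,\ldots,\widehat{i_j},\ldots,i_k)$, and use the entrywise (ordered-sequence) form of the shifted condition to conclude it lies in $K$, hence $\tau_j\in\starr(1)\cap\rest\{2,\ldots,n\}=\link(1)$. Your explicit entry-by-entry comparison just spells out the inequality the paper states more briefly.
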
 

\begin{proof} 
Suppose the ordered sequence corresponding to $\sigma$ is 
$(i_{1},\ldots,i_{k})$. Then $\partial\sigma=\bigcup_{j=1}^{k}\sigma_{j}$ 
for $\sigma_{j}=(i_{1},\ldots,\hat{i}_{j},\ldots,i_{k})$, where $\hat{i}_{j}$ 
means omit the $j^{th}$-coordinate. So to prove the lemma it is equivalent  
to show that $\sigma_{j}=(i_{1},\ldots,\hat{i}_{j},\ldots,i_{k})\in\link(1)$ for 
each $1\leq j\leq k$. 

Fix $j$. Observe that $\sigma_{j}=(i_{1},\ldots,\hat{i}_{j},\ldots,i_{k})$   
is a sequence of length $k-1$ and $2\leq i_{1}<\ldots<i_{k}\leq n$. 
We claim that the sequence $(1,i_{1},\ldots,\hat{i}_{j},\ldots,i_{k})$ of 
length $k$ represents a face of $K$. This holds because, as ordered sequences, 
we have  $(1,i_{1},\ldots,\hat{i}_{j},\ldots,i_{k})<(i_{1},\ldots,i_{k})$, and 
the shifted property for $K$ implies that as $(i_{1},\ldots,i_{k})\in K$, any 
ordered sequence less than $(i_{1},\ldots,i_{k})$ also represents a face of $K$. 
Now, as $(1,i_{1},\ldots,\hat{i}_{j},\ldots,i_{k})\in K$, we clearly have 
$(1,i_{1},\ldots,\hat{i}_{j},\ldots,i_{k})\in\starr(1)$. 
Thus the sub-simplex $(i_{1},\ldots,\hat{i}_{j},\ldots,i_{k})$ 
in also in $\starr(1)$. That is, $\sigma_{j}\in\starr(1)$. Hence 
$\sigma_{j}\in\starr(1)\cap\rest\{2,\ldots,n\}=\link(1)$, as required.   
\end{proof} 

\begin{remark} 
In Lemma~\ref{shifteddelta}, it may be that $\sigma$ 
itself is in $\link(1)$, but this need not be the case. For if we argue as 
in the proof of the lemma, we obtain $\sigma\in\link(1)$ if and 
only if $(1,i_{1},\ldots,i_{k})\in K$. 
\end{remark} 

\begin{remark} 
\label{coneremark} 
It is also worth noting that as $\partial\sigma\in\link(1)$ and 
$\starr(1)=(1)\ast\link(1)$, we have $(1)\ast\partial\sigma\subseteq\starr(1)$. 
That is, the cone on $\partial\sigma$ is in $\starr(1)$. 
\end{remark} 

We say that a face $\tau$ of a simplicial complex $K$ is \emph{maximal} 
if there is no other face $\tau^{\prime}\in K$ with~$\tau$ a proper 
subset of $\tau^{\prime}$. 

\begin{lemma} 
   \label{shiftedfilt} 
   Let $K$ be a shifted complex on the index set $[n]$. Then the map 
   \(\namedright{\link(1)}{}{\rest\{2,\ldots,n\}}\) 
   is filtered by a sequence of simplicial complexes 
   \[\link(1)=L_{0}\subseteq L_{1}\subseteq\cdots\subseteq  
          L_{m}=\rest\{2,\ldots,n\}\] 
   where $L_{i} =L_{i-1}\cup\tau_{i}$ and $\tau_{i}$ satisfies: 
   \begin{letterlist} 
      \item $\tau_{i}$ is maximal in $\rest\{2,\ldots,n\}$; 
      \item $\tau_{i}\notin\link(1)$; 
      \item $\partial\tau_{i}\in\link(1)$. 
   \end{letterlist} 
\end{lemma}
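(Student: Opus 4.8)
I want to list the faces of $\rest\{2,\ldots,n\}$ that are not in $\link(1)$, order them by a suitable dimension/lexicographic refinement so that each gets added after its boundary is already present, and check that the three properties (a)--(c) hold with the $\tau_i$ taken to be the \emph{maximal} faces of $\rest\{2,\ldots,n\}$ that are missing from $\link(1)$. The key structural input is Remark after Lemma~\ref{shifteddelta}: for a shifted complex, a face $\sigma$ lies in $\link(1)$ iff $(1)\ast\sigma$, equivalently $(1,i_1,\ldots,i_k)$, lies in $K$; and by Lemma~\ref{shifteddelta} every $\sigma\in\rest\{2,\ldots,n\}$ already has $\partial\sigma\in\link(1)$, so property (c) is automatic for \emph{any} face of $\rest\{2,\ldots,n\}\setminus\link(1)$ — in particular for the $\tau_i$.

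First I would set $\mathcal{T}=\{\tau\in\rest\{2,\ldots,n\}\mid\tau\notin\link(1)\}$ and observe $\mathcal{T}$ is a down-set complement in the following weak sense: if $\tau\in\mathcal{T}$ and $\tau\subsetneq\tau'$ with $\tau'\in\rest\{2,\ldots,n\}$, then $\tau'\in\mathcal{T}$ as well — because $\tau'\in\link(1)$ would force $(1)\ast\tau'\in K$, hence $(1)\ast\tau\in K$ (a subface), hence $\tau\in\link(1)$, a contradiction. Consequently the maximal faces of $\rest\{2,\ldots,n\}$ that lie outside $\link(1)$ are exactly the maximal faces of $\mathcal{T}$, and every face of $\mathcal{T}$ is contained in one of them; call these maximal missing faces $\tau_1,\ldots,\tau_m$, ordered arbitrarily. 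Property (a) holds by construction (each $\tau_i$ is maximal in $\rest\{2,\ldots,n\}$, not merely in $\mathcal{T}$: a face properly containing $\tau_i$ in $\rest\{2,\ldots,n\}$ would again lie in $\mathcal{T}$, contradicting maximality in $\mathcal{T}$), property (b) is the defining condition of $\mathcal{T}$, and property (c) is Lemma~\ref{shifteddelta}.

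Then I would verify the filtration statement itself. Set $L_0=\link(1)$ and $L_i=L_{i-1}\cup\tau_i$ (meaning: adjoin $\tau_i$ together with all its subfaces). Since $\partial\tau_i\in\link(1)\subseteq L_{i-1}$, the set $L_{i-1}\cup\tau_i$ is genuinely a simplicial complex — adding $\tau_i$ introduces no new faces except $\tau_i$ itself. The chain is increasing by definition, and I claim $L_m=\rest\{2,\ldots,n\}$: any face $\rho\in\rest\{2,\ldots,n\}$ is either in $\link(1)$ (hence in $L_0$) or lies in $\mathcal{T}$, in which case $\rho\subseteq\tau_i$ for some $i$, so $\rho\in L_i\subseteq L_m$; conversely $L_m\subseteq\rest\{2,\ldots,n\}$ since each $\tau_i$ and all its subfaces lie there. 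Finally $\link(1)\subseteq\rest\{2,\ldots,n\}$ gives the asserted map, which is the inclusion filtered by the $L_i$.

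**Anticipated obstacle.** There is essentially no deep obstacle; the substance is entirely in the two preceding lemmas (especially the $\iff$ characterization of membership in $\link(1)$). The one place to be careful is the claim that the $\tau_i$ are maximal in all of $\rest\{2,\ldots,n\}$ rather than merely within the complement $\mathcal{T}$ — this needs the observation that $\mathcal{T}$ is upward-closed inside $\rest\{2,\ldots,n\}$, which is where the shifted hypothesis is genuinely used. A secondary point of care is bookkeeping: ensuring that "$L_i=L_{i-1}\cup\tau_i$" is interpreted as adjoining a single new face (licensed by $\partial\tau_i\subseteq L_{i-1}$), so that the filtration really does proceed "one simplex at a time" in the form needed for the later application of Theorem~\ref{conefillpo}.
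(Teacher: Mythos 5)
Your proof is correct and follows essentially the same route as the paper: build the filtration by adjoining, one at a time, the maximal faces of $\rest\{2,\ldots,n\}$ that are missing from $\link(1)$, with Lemma~\ref{shifteddelta} supplying part (c). If anything, your treatment is more careful than the paper's, which appeals to a general ``glue in maximal faces one at a time over their boundaries'' principle that only works here because Lemma~\ref{shifteddelta} guarantees each $\partial\tau_{i}$ already lies in $L_{0}$ --- exactly the point you make explicit when you note that the complement of $\link(1)$ in $\rest\{2,\ldots,n\}$ consists solely of maximal faces whose full boundaries sit in $\link(1)$.
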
 

\begin{proof} 
In general, if $L$ is a connected simplicial complex and 
$L_{0}\subseteq L$ is a subcomplex (not necessarily connected), 
it is possible to start with $L_{0}$ and sequentially glue in faces 
one at a time to get $L$. That is, there is a sequence of simplicial 
complexes $L_{0}=\subseteq L_{1}\subseteq\cdots\subseteq L_{m}=L$ 
where $L_{i}=L_{i-1}\cup\tau_{i}$ for some simplex $\tau_{i}\in L$, 
$\tau_{i}\notin L_{i-1}$ and the union is taken over the boundary 
$\partial\tau_{i}$ of $\tau_{i}$. In addition, it may be assumed that 
the adjoined faces $\tau_{i}$ are maximal in $L$. Thus parts~(a) 
and~(b) of the lemma follow. For part~(c), since $K$ is shifted and 
each $\tau_{i}\in\rest\{2,\ldots,n\}$, Lemma~\ref{shifteddelta} 
implies that $\partial\sigma\in\link(1)$. 
\end{proof}  

Next, we turn to the two decomposition lemmas. 

\begin{lemma} 
   \label{suspprod} 
   For any spaces $M, N_{1},\ldots,N_{m}$, there is a homotopy equivalence 
   \[\Sigma M\rtimes (N_{1}\times\cdots\times N_{m})\simeq\Sigma M\vee 
         \left(\bigvee_{1\leq t_{1}<\cdots<t_{k}\leq m} 
         \Sigma M\wedge N_{i_{1}}\wedge\cdots\wedge N_{i_{k}}\right).\] 
\end{lemma}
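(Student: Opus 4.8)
The plan is to prove this by induction on $m$, using the general principle that smashing and half-smashing distribute over the relevant constructions after one suspension. The base case $m=1$ is the standard James-type splitting $\Sigma M \rtimes N_1 \simeq \Sigma M \vee (\Sigma M \wedge N_1)$, which comes from the cofibration $\Sigma M \wedge N_1 \to \Sigma M \rtimes N_1 \to \Sigma M$ together with the fact that the quotient map $\Sigma M \rtimes N_1 \to \Sigma M$ has a section (the inclusion of $\Sigma M = \Sigma M \rtimes \ast$), and every such cofibration of suspension-type spaces splits. I would state this base case explicitly as the key input, perhaps citing it or giving the one-line argument.

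For the inductive step, write $P = N_1 \times \cdots \times N_{m-1}$ so that $N_1 \times \cdots \times N_m = P \times N_m$. First I would establish the identity $\Sigma M \rtimes (P \times N_m) \simeq (\Sigma M \rtimes P) \rtimes N_m$, which follows from the elementary observation that $\rtimes$ is ``associative'' in this sense: $(A \rtimes B)\rtimes C$ and $A \rtimes (B \times C)$ both have underlying set $A \times B \times C$ quotiented by collapsing the subspace where the $A$-coordinate is the basepoint, so they are homeomorphic. Then apply the base case with $\Sigma M \rtimes P$ in place of $\Sigma M$ — note $\Sigma M \rtimes P$ is still a suspension, being a retract of $\Sigma(M \rtimes P)$ or directly seen to be $\Sigma(\text{something})$ via the splitting itself — to get
\[
  (\Sigma M \rtimes P) \rtimes N_m \simeq (\Sigma M \rtimes P) \vee \big((\Sigma M \rtimes P) \wedge N_m\big).
\]
Now apply the inductive hypothesis to the first wedge summand $\Sigma M \rtimes P$, and for the second summand use that smashing with $N_m$ distributes over wedges and that $(\Sigma M \wedge N_{i_1} \wedge \cdots \wedge N_{i_k}) \wedge N_m = \Sigma M \wedge N_{i_1} \wedge \cdots \wedge N_{i_k} \wedge N_m$; this also needs $(\Sigma M \rtimes P)\wedge N_m \simeq (\Sigma M \wedge N_m) \vee \bigvee (\Sigma M \wedge N_{i_1}\wedge\cdots\wedge N_{i_k}\wedge N_m)$, obtained by smashing the inductive decomposition of $\Sigma M \rtimes P$ with $N_m$. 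Collecting terms, the subsets of $\{1,\ldots,m-1\}$ (from the inductive hypothesis on $\Sigma M \rtimes P$) together with those same subsets adjoined with $\{m\}$ (from the $\wedge N_m$ summand, including the empty set giving $\Sigma M \wedge N_m$) range exactly over all nonempty subsets of $\{1,\ldots,m\}$, which gives the claimed formula.

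The main obstacle — really the only subtlety — is making sure the splittings are genuinely natural enough that the bookkeeping of wedge summands is valid, i.e. that at each stage the relevant cofibrations split compatibly and that $\Sigma M \rtimes P$ may legitimately play the role of the ``$\Sigma M$'' in the base-case splitting (it must be a co-H-space, or a suspension, for the section-splitting argument to apply). This is handled by observing that the base-case splitting $\Sigma M \rtimes N_1 \simeq \Sigma M \vee (\Sigma M \wedge N_1)$ realizes $\Sigma M \rtimes N_1$ as a wedge of suspensions, hence inductively $\Sigma M \rtimes P$ is a wedge of suspensions and in particular a suspension up to homotopy (a wedge of suspensions is a suspension), so the argument goes through. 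Everything else is a routine combinatorial re-indexing of wedge summands.
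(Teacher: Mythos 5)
Your proof is correct, but it is organized differently from the paper's. The paper splits off $\Sigma M$ once at the start via the standard equivalence $\Sigma X\rtimes Y\simeq\Sigma X\vee(\Sigma X\wedge Y)$, reducing everything to the single computation of $\Sigma M\wedge(N_{1}\times\cdots\times N_{m})\simeq M\wedge\Sigma(N_{1}\times\cdots\times N_{m})$, and then invokes the classical James-type splitting $\Sigma(N_{1}\times\cdots\times N_{m})\simeq\bigvee\Sigma N_{i_{1}}\wedge\cdots\wedge N_{i_{k}}$ (obtained by iterating $\Sigma(X\times Y)\simeq\Sigma X\vee\Sigma Y\vee(\Sigma X\wedge Y)$) before distributing the smash over the wedge. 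You instead induct on $m$ inside the half-smash itself, using the point-set associativity $A\rtimes(B\times C)\cong(A\rtimes B)\rtimes C$ and reapplying the basic splitting at each stage; the one genuine subtlety --- that $\Sigma M\rtimes P$ must be a suspension (or at least a co-H-space) for the splitting to apply at the next stage --- you handle correctly by noting that the inductive decomposition exhibits it as a wedge of suspensions. Your route is self-contained modulo the single input $\Sigma A\rtimes B\simeq\Sigma A\vee(\Sigma A\wedge B)$ and makes the indexing by subsets of $\{1,\ldots,m\}$ transparent, whereas the paper's is shorter because it outsources the combinatorics to the known product splitting. One small caution: your justification of the base case via ``a cofibration with a section splits'' is looser than what is true in general; the splitting of $\Sigma A\rtimes B$ is really constructed from the suspension (pinch) coordinate on $\Sigma A$, which is why the co-H hypothesis on the left factor is needed at every stage --- but since you track exactly that hypothesis through the induction, there is no gap.
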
 

\begin{proof} 
In general, $\Sigma X\rtimes Y\simeq\Sigma X\vee(\Sigma X\wedge Y)$, 
so it suffices to decompose $\Sigma M\wedge (N_{1}\times\cdots\times N_{m})$. 
Iterating the basic fact that 
$\Sigma(X\times Y)\simeq\Sigma X\vee\Sigma Y\vee(\Sigma X\wedge Y)$, 
we obtain a homotopy equivalence 
$\Sigma(N_{1}\times\cdots\times N_{m})\simeq 
      \bigvee_{1\leq t_{1}<\cdots<t_{k}\leq m} 
      (\Sigma N_{i_{1}}\wedge\cdots\wedge N_{i_{k}})$. 
Thus, as $X\ast Y\simeq\Sigma X\wedge Y\simeq X\wedge\Sigma Y$ for any 
space $X$, we have 
\begin{eqnarray*} 
    M\ast(N_{1}\times\cdots\times N_{m}) & \simeq 
      & M\wedge\Sigma(N_{1}\times\cdots\times N_{m}) \\ 
    & \simeq & M\wedge\left(\bigvee_{1\leq t_{1}<\cdots<t_{k}\leq m} 
        \Sigma N_{i_{1}}\wedge\cdots\wedge N_{i_{k}}\right) \\ 
    & \simeq & \bigvee_{1\leq t_{1}<\cdots<t_{k}\leq m} 
        M\wedge\Sigma N_{i_{1}}\wedge\cdots\wedge N_{i_{k}}. 
  \end{eqnarray*} 
\end{proof} 
      
Recall from Section~\ref{sec:condition} 
that if $K$ is a simplicial complex on the index set $[n]$ 
then $\Delta^{i_{1},\ldots,i_{k}}$ is the full $(k-1)$-dimensional 
simplex on the vertex set $\{i_{1},\ldots,i_{k}\}$ for 
$1\leq i_{1}<\cdots<i_{k}\leq n$, and 
$\Delta^{i_{1},\ldots,i_{k}}_{k-2}$ is the full $(k-2)$-skeleton 
of~$\Delta^{i_{1},\ldots,i_{k}}$. 

\begin{lemma} 
   \label{fatwedgeretract} 
   Let $K$ be a simplicial complex on the index set $[n]$. Suppose for 
   some sequence $1\leq i_{1}<\cdots <i_{k}\leq n$, we have 
   $\Delta^{i_{1},\ldots,i_{k}}_{k-2}\subseteq K$ but 
   $\Delta^{i_{1},\ldots,i_{k}}\subsetneqq K$. Then the map 
   \(\namedright{\cxx^{\Delta^{i_{1},\ldots,i_{k}}_{k-2}}}{}{\cxxk}\) 
   induced by the inclusion 
   \(\namedright{\Delta^{i_{1},\ldots,i_{k}}_{k-2}}{}{K}\) 
   has a left inverse. Consequently,  
   $X_{i_{1}}\ast\cdots\ast X_{i_{k}}$ is a retract of $\cxxk$. 
\end{lemma}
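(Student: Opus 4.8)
The plan is to build a genuine retraction of spaces onto $\cxx^{\Delta^{i_{1},\ldots,i_{k}}_{k-2}}$ from coordinate projections, and then pass to homotopy via Proposition~\ref{join}.

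First I would identify the subcomplex combinatorially. Put $S=\{i_{1},\ldots,i_{k}\}$ and let $K_{S}$ be the full subcomplex of $K$ on $S$. Since $\Delta^{i_{1},\ldots,i_{k}}\not\subseteq K$, the set $S$ is not a face of $K$, so $\sigma\cap S$ is a \emph{proper} subset of $S$ for every $\sigma\in K$; and since $\Delta^{i_{1},\ldots,i_{k}}_{k-2}\subseteq K$, every proper subset of $S$ is a face of $K$. Hence $K_{S}=\{\sigma\cap S\mid\sigma\in K\}$ consists of exactly the proper subsets of $S$, that is, $K_{S}=\Delta^{i_{1},\ldots,i_{k}}_{k-2}$ as a complex on $S$.

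Next I would write down the two maps. Let $r\colon\namedright{\cxx^{K}}{}{\cxx^{K_{S}}}$ be the restriction of the coordinate projection $\namedright{\prod_{i=1}^{n}CX_{i}}{}{\prod_{i\in S}CX_{i}}$: for $\sigma\in K$ a point of $\cxx^{\sigma}$ has $S$-coordinates lying in $\cxx^{\sigma\cap S}$, and $\sigma\cap S\in K_{S}$, so $r$ does land in $\cxx^{K_{S}}$ -- this is the only place the hypothesis $\Delta^{i_{1},\ldots,i_{k}}\not\subseteq K$ is used. Conversely, let $\iota\colon\namedright{\cxx^{K_{S}}}{}{\cxx^{K}}$ send $(y_{i})_{i\in S}$ to the point of $\prod_{i=1}^{n}CX_{i}$ whose $i$-th coordinate is $y_{i}$ for $i\in S$ and is the basepoint of $X_{i}$ for $i\notin S$. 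The one subtlety is to use the basepoint of $X_{i}$ (which lies in $X_{i}\subseteq CX_{i}$) rather than the apex of the cone, so that if $(y_{i})_{i\in S}$ lies in $\cxx^{\tau}$ for a face $\tau\subsetneq S$ -- necessarily a face of $K$ -- its image lies in $\cxx^{\overline{\tau}}\subseteq\cxx^{K}$. This $\iota$ realizes the map induced by the inclusion $\namedright{\Delta^{i_{1},\ldots,i_{k}}_{k-2}}{}{K}$, together with the basepoint sections in the coordinates outside $S$.

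The rest is immediate: $\iota$ only fills the coordinates outside $S$ with basepoints and $r$ discards precisely those coordinates, so $r\circ\iota=\mathrm{id}$, proving that $\cxx^{\Delta^{i_{1},\ldots,i_{k}}_{k-2}}$ is a retract of $\cxx^{K}$. Finally, Proposition~\ref{join}, applied to the full $(k-2)$-skeleton of the simplex on $\{i_{1},\ldots,i_{k}\}$, gives $\cxx^{\Delta^{i_{1},\ldots,i_{k}}_{k-2}}\simeq\Sigma^{k-1}X_{i_{1}}\wedge\cdots\wedge X_{i_{k}}$, and iterating $Y\ast Z\simeq\Sigma Y\wedge Z$ identifies this with $X_{i_{1}}\ast\cdots\ast X_{i_{k}}$; composing the retraction with this equivalence and a homotopy inverse shows $X_{i_{1}}\ast\cdots\ast X_{i_{k}}$ is a retract of $\cxx^{K}$. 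I do not expect a real obstacle here: the content is just the verification that $r$ and $\iota$ are well defined and mutually a retraction. The two points that deserve attention are that $r$ lands in $\cxx^{K_{S}}$ (which forces the use of $S\notin K$) and the choice of basepoint when defining $\iota$; the vertex-set bookkeeping between $S$ and $[n]$ -- equivalently the extra factor $\prod_{j\notin S}X_{j}$ one would meet by working with $\overline{\Delta}^{i_{1},\ldots,i_{k}}_{k-2}$ on $[n]$ -- is routine since that factor can be collapsed to a basepoint.
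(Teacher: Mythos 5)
Your proof is correct and takes essentially the same route as the paper: the paper expresses the retraction combinatorially, as the projection of $K$ onto its full subcomplex on $\{i_{1},\ldots,i_{k}\}$ (which the hypotheses force to equal $\Delta^{i_{1},\ldots,i_{k}}_{k-2}$) followed by functoriality of the polyhedral product, while you simply write out the resulting coordinate projection and basepoint inclusion explicitly and check $r\circ\iota=\mathrm{id}$. The final identification with $X_{i_{1}}\ast\cdots\ast X_{i_{k}}$ via Proposition~\ref{join} is the same in both arguments.
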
 

\begin{proof} 
Since $\Delta^{i_{1},\ldots,i_{k}}_{k-2}\subseteq K$ but 
$\Delta^{i_{1},\ldots,i_{k}}\subsetneqq K$, projecting the index set $[n]$ 
onto the index set $\{i_{1},\ldots,i_{k}\}$ induces a projection 
\(\namedright{K}{}{\Delta^{i_{1},\ldots,i_{k}}_{k-2}}\). 
Observe that the composite  
\(\nameddright{\Delta^{i_{1},\ldots,i_{k}}_{k-2}}{}{K}{} 
     {\Delta^{i_{1},\ldots,i_{k}}_{k-2}}\) 
is the identity map. Thus the induced composite 
\(\nameddright{\cxx^{\Delta^{i_{1},\ldots,i_{k}}_{k-2}}}{}{\cxxk} 
     {}{\cxx^{\Delta^{i_{1},\ldots,i_{k}}_{k-2}}}\) 
is the identity map. Consequently, the homotopy equivalence 
$\cxx^{\Delta_{i_{1},\ldots,i_{k}}^{k-1}}\simeq X_{i_{1}}\ast\cdots\ast X_{i_{k}}$ 
of Proposition~\ref{join} implies that $X_{i_{1}}\ast\cdots\ast X_{i_{k}}$ 
is a retract of $\cxxk$.  
\end{proof}  

We expand on Lemma~\ref{fatwedgeretract}. Let $\{j_{1},\ldots,j_{n-k}\}$  
be the vertices in $[n]$ which are complementary to $\{i_{1},\ldots,i_{k}\}$. 
Let $\widehat{X}=\prod_{t=1}^{n-k} X_{j_{t}}$. Since the index sets 
$\{i_{1},\ldots,i_{k}\}$ and $\{j_{1},\ldots,j_{n-k}\}$ are complementary 
the inclusion 
\(\namedright{\Delta^{i_{1},\ldots,i_{k}}_{k-2}}{}{K}\) 
induces an inclusion 
\(I\colon\namedright{\cxx^{\Delta^{i_{1},\ldots,i_{k}}_{k-2}}\times\widehat{X}} 
     {}{\cxx^{K}}\). 
If each vertex of $[n]$ is in $K$, Proposition~\ref{projincl} implies that 
the restriction of $I$ to $\widehat{X}$ is null homotopic. Thus $I$ factors 
through a map 
\(I^{\prime}\colon\namedright 
     {\cxx^{\Delta^{i_{1},\ldots,i_{k}}_{k-2}}\rtimes\widehat{X}}{}{\cxx^{K}}\). 

\begin{lemma} 
   \label{fatwedgeretract2} 
   Let $K$ be a simplicial complex on the index set $[n]$ for which each 
   vertex is in $K$. Suppose for some sequence $1\leq i_{1}<\cdots <i_{k}\leq n$, 
   we have $\Delta^{i_{1},\ldots,i_{k}}_{k-2}\subseteq K$ but 
   $\Delta^{i_{1},\ldots,i_{k}}\subsetneqq K$. Then the map 
   \(I^{\prime}\colon\namedright 
         {\cxx^{\Delta^{i_{1},\ldots,i_{k}}_{k-2}}\rtimes\widehat{X}}{}{\cxxk}\) 
   induced by the inclusion 
   \(\namedright{\Delta^{i_{1},\ldots,i_{k}}_{k-2}}{}{K}\) 
   has a left inverse after suspending. 
\end{lemma}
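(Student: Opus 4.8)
The plan is to upgrade Lemma~\ref{fatwedgeretract} from a genuine (unsuspended) retract statement to a retract-after-suspension statement for the half-smash version $I^{\prime}$. The obvious attempt is to just smash $I^{\prime}$ with $\widehat{X}$ the way we smashed $i$ with $\widehat{X}$; the trouble is that in Lemma~\ref{fatwedgeretract} the projection $\namedright{K}{}{\Delta^{i_{1},\ldots,i_{k}}_{k-2}}$ induced a retraction on the polyhedral products, but after crossing with $\widehat{X}$ and then collapsing $\widehat{X}$ (via Proposition~\ref{projincl}) to form the half-smash, the naive retraction no longer has the cone coordinates available to be killed, so the composite need not land back where we want on the nose. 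This is why the statement is phrased only up to suspension — the extra suspension coordinate gives us room to split off the summands of $\widehat{X}$ using the standard stable splitting of a half-smash. So the main obstacle is bookkeeping the half-smash decomposition, not any deep homotopy theory.

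First I would set $J=\Delta^{i_{1},\ldots,i_{k}}_{k-2}$, write $W=\cxx^{J}$ for brevity, and recall from Proposition~\ref{join} that $W\simeq X_{i_{1}}\ast\cdots\ast X_{i_{k}}$, which is a suspension (indeed $\Sigma^{k-1}$ of a smash). Apply Lemma~\ref{fatwedgeretract} to get maps $\namedright{W}{r_{0}}{\cxx^{K}}$ and $\namedright{\cxx^{K}}{p_{0}}{W}$ with $p_{0}r_{0}\simeq\mathrm{id}$. Crossing with the identity on $\widehat{X}$, these give $\namedright{W\times\widehat{X}}{r_{0}\times 1}{\cxx^{K}\times\widehat{X}}$; but $\cxx^{K}$ already contains $\widehat{X}$ as a factor inside the relevant coordinates, so I want the map $\namedright{W\times\widehat{X}}{}{\cxx^{K}}$ which on the $W$-factor is $r_{0}$ and on the $\widehat{X}$-factor is the coordinate-wise inclusion — this is exactly the map $I$ appearing just before the statement. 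Dually, the projection $\namedright{K}{}{J}$ used in Lemma~\ref{fatwedgeretract}, together with the projections of $\cxx^{K}$ onto the $X_{j_{t}}$ factors, assembles into a map $\namedright{\cxx^{K}}{P}{W\times\widehat{X}}$, and the composite $\namedright{W\times\widehat{X}}{I}{\cxx^{K}}{P}{W\times\widehat{X}}$ is, coordinate-wise, the identity — here one just checks on the two blocks of coordinates, using that the index sets $\{i_{1},\dots,i_{k}\}$ and $\{j_{1},\dots,j_{n-k}\}$ are complementary.

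Next I would pass to the half-smash. By Proposition~\ref{projincl} the restriction of $I$ to $\widehat{X}$ is null homotopic, so $I$ factors as $\namedright{W\times\widehat{X}}{q}{W\rtimes\widehat{X}}{I^{\prime}}{\cxx^{K}}$ where $q$ is the quotient. Now $W$ is a suspension, so by Lemma~\ref{suspprod} the quotient map $q$ admits a stable section: after one suspension there is a map $\namedright{\Sigma(W\rtimes\widehat{X})}{s}{\Sigma(W\times\widehat{X})}$ with $\Sigma q\circ s\simeq\mathrm{id}$ (this is just the standard splitting $\Sigma(W\rtimes\widehat{X})\simeq\Sigma W\vee(\Sigma W\wedge\widehat{X})$ sitting inside $\Sigma(W\times\widehat{X})\simeq\Sigma W\vee\Sigma\widehat{X}\vee(\Sigma W\wedge\widehat{X})$ as the two displayed wedge summands). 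Then the composite
\[
\namedright{\Sigma(W\rtimes\widehat{X})}{s}{\Sigma(W\times\widehat{X})}
\stackrel{\Sigma(\Sigma I^{\prime}\circ q)? }{}
\]
is not quite what I want; instead I would form $\namedright{\Sigma\cxx^{K}}{\Sigma P}{\Sigma(W\times\widehat{X})}{\Sigma q}{\Sigma(W\rtimes\widehat{X})}$ and check that $(\Sigma q\circ\Sigma P)\circ(\Sigma I^{\prime})\simeq\Sigma q\circ\Sigma P\circ\Sigma I^{\prime}$; since $I^{\prime}\circ q=I$ and $P\circ I\simeq\mathrm{id}$ up to the coordinate identifications above, this composite equals $\Sigma q\circ\Sigma(\mathrm{id})\circ(\text{desuspension issue})$, i.e. $\Sigma q$ precomposed with $q$, which is $\mathrm{id}_{\Sigma(W\rtimes\widehat{X})}$ because $q\circ(\text{anything splitting }q)$ — here the cleanest route is: $P\circ I\simeq\mathrm{id}$ forces $P\circ I^{\prime}\circ q\simeq\mathrm{id}$, hence $P\circ I^{\prime}$ is a left inverse to $q$ up to homotopy, and composing with the stable section $s$ of $q$ shows $\Sigma(P\circ I^{\prime})$ is split surjective, i.e. $\Sigma q\circ\Sigma P$ is a left inverse of $\Sigma I^{\prime}$. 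Thus $\namedright{\Sigma\cxx^{K}}{\Sigma q\circ\Sigma P}{\Sigma(W\rtimes\widehat{X})=\Sigma(\cxx^{\Delta^{i_{1},\ldots,i_{k}}_{k-2}}\rtimes\widehat{X})}$ is the desired left inverse of $\Sigma I^{\prime}$, which is exactly the claim.
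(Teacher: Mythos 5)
There is a genuine gap, and it is fatal to the approach. The map $P\colon\namedright{\cxx^{K}}{}{W\times\widehat{X}}$ that you assemble does not exist: since each vertex $\{j_{t}\}$ lies in $K$, the $j_{t}$-coordinate of a point of $\cxx^{K}$ ranges over all of $CX_{j_{t}}$, not over $X_{j_{t}}$, so there is no coordinate projection of $\cxx^{K}$ onto $X_{j_{t}}$ and hence none onto $\widehat{X}$. More decisively, \emph{no} map $P$ with $P\circ I\simeq\mathrm{id}$ can exist by any construction: Proposition~\ref{projincl} --- which you yourself invoke one sentence later --- says the restriction of $I$ to $\widehat{X}$ is null homotopic, so $P\circ I$ restricted to the factor $\widehat{X}$ is null homotopic and cannot be the identity unless $\widehat{X}$ is contractible. (Equivalently, your intermediate claim that $P\circ I^{\prime}$ is a left homotopy inverse of the quotient $\namedright{W\times\widehat{X}}{q}{W\rtimes\widehat{X}}$ would exhibit $W\times\widehat{X}$ as a retract of $W\rtimes\widehat{X}$, which already fails in homology since $\widetilde{H}_{*}(\widehat{X})$ is killed by $q$.) The entire difficulty of the lemma is precisely that the honest retraction of Lemma~\ref{fatwedgeretract} cannot be extended over the $\widehat{X}$ coordinates, so the statement cannot be proved by producing an unsuspended left inverse of $I$ and then suspending.

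The paper's proof constructs no map out of $\cxx^{K}$ at all. It works with the stable decomposition~(\ref{BBCGdecomp}): because $(i_{1},\ldots,i_{k})$ is a minimal missing face of $K$, every sequence $(t_{1},\ldots,t_{l})$ containing it as a subsequence is also a missing face, so each $\Sigma(\Sigma^{k-1}X_{t_{1}}\wedge\cdots\wedge X_{t_{l}})$ appears as a wedge summand of $\Sigma\cxxk$; these summands are in one-to-one correspondence with the wedge summands of $\Sigma\bigl(\cxx^{\Delta^{i_{1},\ldots,i_{k}}_{k-2}}\rtimes\widehat{X}\bigr)$ given by Lemma~\ref{suspprod}, and that matching of summands is what yields the left inverse after one suspension. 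A correct argument needs this global stable splitting (or something equivalent), not a pointwise retraction.
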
 

\begin{proof} 
The hypothesis that $\Delta^{i_{1},\ldots,i_{k}}_{k-2}\subseteq K$ but 
$\Delta^{i_{1},\ldots,i_{k}}\subsetneqq K$ implies that $(i_{1},\ldots,i_{k})$ 
is a (minimal) missing face of $K$. Thus any other sequence 
$(t_{1},\ldots,t_{l})$ for $l\leq n$ with $(i_{1},\ldots,i_{k})$ a subsequence 
also represents a missing face of $K$. The decomposition in~(\ref{BBCGdecomp}) 
therefore implies that 
$\Sigma(\Sigma^{k-1} X_{t_{1}}\wedge\cdots\wedge X_{t_{l}})$ is a summand 
of $\Sigma\cxx^{K}$. Notice that any sequence $(t_{1},\ldots,t_{l})$ 
is obtained by starting with $(i_{1},\ldots,i_{k})$ and inserting $l-k$ 
distinct element from the complement $\{j_{1},\ldots,j_{n-k}\}$ of 
$\{i_{1},\ldots,i_{k}\}$ in $[n]$. Thus the list of all the wedge summands 
$\Sigma(\Sigma^{k-1} X_{t_{1}}\wedge\cdots\wedge X_{t_{l}})$ is in 
one-to-one correspondence with the wedge summands of  
$\Sigma(\Sigma^{k-1} X_{i_{1}}\wedge\cdots\wedge X_{i_{k}})\rtimes\widehat{X}$ 
after applying Lemma~\ref{suspprod}. Hence 
$\Sigma(\Sigma^{k-1} X_{i_{1}}\wedge\cdots\wedge X_{i_{k}})\rtimes\widehat{X}$ 
retracts off $\Sigma\cxx^{K}$. 
\end{proof} 

We are now ready to prove the main result in the paper. 
For convenience, let $\W_{n}$ be the collection of spaces which are 
either contractible or homotopy equivalent to a wedge of spaces of 
the form $\Sigma^{j} X_{i_{1}}\wedge\cdots\wedge X_{i_{k}}$ 
for $j\geq 1$ and $1\leq i_{1}<\cdots <i_{k}\leq n$. Note that 
for each $n>1$, $\W_{n-1}\subseteq\W_{n}$. 

\begin{proof}[Proof of Theorem~\ref{cxxshifted}] 
The proof is by induction on the number of vertices. If $n=1$ 
then $K=\{1\}$, which is shifted, and the definition of the polyhedral 
product implies that $\cxxk=CX$, which is contractible. Thus 
$K\in\W_{1}$. 

Assume the theorem holds for all shifted complexes on $k$ vertices, 
with $k<n$. Let $K$ be a shifted complex on the index set $[n]$. Consider 
the pushout 
\[\diagram 
       \link(1)\rto\dto & \rest\{2,\ldots,n\}\dto \\ 
       \starr(1)\rto & K  
  \enddiagram\] 
and recall that $\starr(1)=(1)\ast\link(1)$. Since $K$ is shifted, so are 
$\rest\{2,\ldots,n\}$, $\starr(1)$ and $\link(1)$. Note that 
$\rest\{2,\ldots,n\}$ is a shifted complex on $n-1$ vertices, 
and as $\link(1)$ is a subcomplex of $\rest\{2,\ldots,n\}$, it too 
is a shifted complex on $n-1$ vertices. Therefore, by inductive hypothesis, 
$\cxx^{\link(1)}\in\W_{n-1}$.  

By Lemma~\ref{shiftedfilt}, the map 
\(\namedright{\link(1)}{}{\rest\{2,\ldots,n\}}\) 
is filtered by a sequence of simplicial complexes 
\[\link(1)=L_{0}\subseteq L_{1}\subseteq\cdots\subseteq  
        L_{m}=\rest\{2,\ldots,n\}\] 
where $L_{i} =L_{i-1}\cup\tau_{i}$ and $\tau_{i}$ satisfies: 
(i)~$\tau_{i}$ is maximal in $\rest\{2,\ldots,n\}$; (ii)~$\tau_{i}\notin\link(1)$; 
and (iii)~$\partial\tau_{i}\in\link(1)$. In particular, for each $1\leq i\leq m$, 
there is a pushout 
\begin{equation} 
   \label{Lpo} 
   \diagram 
       \partial\tau_{i}\rto\dto & \tau_{i}\dto \\ 
       L_{i-1}\rto & L_{i}. 
  \enddiagram 
\end{equation} 
Let $K_{0}=\starr(1)$, and for 
$1\leq i\leq m$, define $K_{i}$ as the simplicial complex obtained from 
the pushout 
\begin{equation} 
  \label{Kpo} 
  \diagram 
         L_{i-1}\rto\dto & L_{i}\dto \\ 
         K_{i-1}\rto & K_{i}. 
  \enddiagram 
\end{equation} 
Observe that we obtain a filtration of the map 
\(\namedright{\starr(1)}{}{K}\) 
as a sequence 
$\starr(1)=K_{0}\subseteq K_{1}\subseteq\cdots\subseteq K_{m}=\rest\{2,\ldots,n\}$. 
Juxtaposing the pushouts in~(\ref{Lpo}) and~(\ref{Kpo}) we obtain a pushout 
\begin{equation} 
  \label{KLpo} 
  \diagram 
       \partial\tau_{i}\rto\dto & \tau_{i}\dto \\ 
       K_{i-1}\rto & K_{i}. 
  \enddiagram 
\end{equation} 

Since $\partial\tau_{i}\in\link(1)$, Remark~\ref{coneremark} 
implies that $(1)\ast\partial\tau_{i}\in\starr(1)$. Thus as 
$\starr(1)=K_{0}$, the map 
\(\namedright{\partial\tau_{i}}{}{K_{i-1}}\) 
factors as the composite 
\(\namedddright{\partial\tau_{i}}{}{(1)\ast\partial\tau_{i}}{}{\starr(1)=K_{0}} 
      {}{K_{i-1}}\). 
That is, the inclusion of $\partial\tau_{i}$ into $K_{i-1}$ 
factors through the cone on $\partial\tau_{i}$. 

We now argue that each $\cxx^{K_{j}}\in\W_{n}$. First consider $\cxx^{K_{0}}$. 
Since $K_{0}=\starr(1)=(1)\ast\link(1)$, by Lemma~\ref{joinproduct} we have 
$\cxx^{K_{0}}=\cxx^{(1)}\times\cxx^{\link(1)}$. By the definition of 
the polyhedral product, $\cxx^{(1)}=CX_{1}$, so $\cxx^{K_{0}}$ is 
homotopy equivalent to $\cxx^{\link(1)}$. By inductive hypothesis, 
$\cxx^{\link(1)}\in\W_{n-1}$. Thus $\cxx^{K_{0}}\in\W_{n}$.  

Next, fix an integer $j$ such that $1\leq j\leq m$, and assume 
that $\cxx^{K_{j-1}}\in\W_{n}$. We have 
$K_{j}=K_{j-1}\cup_{\partial\tau_{j}}\tau_{j}$. 
Since $\tau_{1}=\Delta^{i_{1},\ldots,i_{k}}$ for some sequence 
$(i_{1},\ldots,i_{k})$, we have $\partial\tau_{1}=\Delta^{i_{1},\ldots,i_{k}}_{k-2}$. 
Let $(j_{1},\ldots,j_{n-k-1})$ be the complement of 
$(1,i_{1},\ldots,i_{k})$ in $[n]$, and let 
$\widehat{X}=\prod_{t=1}^{n-k-1} X_{j_{t}}$. Since 
\(\namedright{\partial\tau_{j}}{}{K_{j-1}}\) 
factors through $(1)\ast\partial\tau_{j}$,  by Theorem~\ref{conefillpo} 
there is a homotopy equivalence 
\[\cxx^{K_{j}}\simeq D_{j}\vee 
      \left(((X_{i_{1}}\ast\cdots\ast X_{i_{k}})\ast X_{1})\rtimes\widehat{X}\right)\] 
where $D_{j}$ is the cofiber of the map 
\(\namedright{(X_{i_{1}}\ast\cdots\ast X_{i_{k}})\rtimes\widehat{X} 
       \simeq\cxx^{\partial\tau_{j}}\rtimes\widehat{X}}{}{\cxx^{K_{j-1}}}\). 
Since $\widehat{X}$ is a product, if we take $M=X_{i_{1}}\ast\cdots\ast X_{i_{k}}\ast X_{1}$ 
then Lemma~\ref{suspprod} implies that    
$((X_{i_{1}}\ast\cdots\ast X_{i_{k}})\ast X_{1})\rtimes\widehat{X}\in\W_{n}$. 
If $D_{j}\in\W_{n}$ as well, then $\cxx^{K_{j}}\in\W_{n}$. Therefore, by induction,  
$\cxx^{K_{m}}\in\W_{n}$. But $\cxx^{K}=\cxx^{K_{m}}$, so $\cxx^{K}\in\W_{n}$, 
which completes the inductive step on the number of vertices and therefore 
proves the theorem. 

It remains to show that $D_{j}\in\W_{n}$. Consider the cofibration 
\[\nameddright{(X_{i_{1}}\ast\cdots\ast X_{i_{k}})\rtimes\widehat{X} 
     \simeq\cxx^{\partial\tau_{j}}\rtimes\widehat{X}}{f}{\cxx^{K_{j-1}}}{}{D_{j}}.\]  
Since $\tau_{j}\notin K_{j-1}$, Lemma~\ref{fatwedgeretract2} implies that 
$\Sigma f$ has a left homotopy inverse. We claim that this implies that $f$ 
has a left homotopy inverse. By Lemma~\ref{suspprod}, 
$(X_{i_{1}}\ast\cdots\ast X_{i_{k}})\rtimes\widehat{X}\in\W_{n}$, 
and by inductive hypothesis, $\cxx^{K_{j-1}}\in\W_{n}$. Thus both 
of these spaces are homotopy equivalent to a wedge of spaces of the form 
$\Sigma^{j} X_{t_{1}}\wedge\cdots\wedge X_{t_{l}}$ for various $j$ and 
sequences $(t_{1},\ldots,t_{l})$ with $l\leq n$. Observe that each space 
$\Sigma^{j} X_{t_{1}}\wedge\cdots\wedge X_{t_{l}}$
is coordinate-wise indecomposable (that is, 
$\Sigma^{j} X_{t_{1}}\wedge\cdots\wedge X_{t_{l}}$ does not decompose 
as a wedge of spaces $\Sigma^{j} X_{u_{1}}\wedge\cdots\wedge X_{u_{l^{\prime}}}$ 
for various sequences $(u_{1},\ldots,u_{l^{\prime}})$.) 
Thus~$f$ maps a wedge of coordinate-wise indecomposable spaces 
into another such wedge. As $f$ respects the coordinate indices, 
the left homotopy inverse for $\Sigma f$ implies that $f$ has a left 
homotopy inverse. 

Now, since $\cxx^{K_{j-1}}\in\W_{n}$ (and is nontrivial), it is a suspension. 
Therefore, the existence of a left homotopy inverse for $f$ implies 
that there is a homotopy equivalence 
$\cxx^{K_{j-1}}\simeq (X_{i_{1}}\ast\cdots\ast X_{i_{k}})\vee D_{j}$. 
Thus $D_{j}$ is a retract of a space in $\W_{n}$, implying that $D_{j}\in\W_{n}$. 

Finally, at this point we have shown that $\cxx^{K}\in\W_{n}$, so 
$\cxx^{K}\simeq\bigvee_{\mathcal{J}} 
     \Sigma^{j} X_{i_{1}}\wedge\cdots\wedge X_{i_{k}}$ 
for some index set $J$. We need to show that the list of spaces in this 
wedge decomposition matches the list in the statement of the theorem,  
$\cxx^{K}\simeq\left(\bigvee_{I\notin K}\vert K_{I}\vert\ast\widehat{X}^{I}\right)$. 
But after suspending, by~\cite{BBCG1} there is a homotopy equivalence 
$\Sigma\cxx^{K}\simeq\Sigma\left( 
    \bigvee_{I\notin K}\vert K_{I}\vert\ast\widehat{X}^{I}\right)$, 
so we obtain 
\begin{equation} 
  \label{listequiv} 
  \Sigma\left(\bigvee_{\mathcal{J}}\Sigma^{j} X_{i_{1}}\wedge\cdots\wedge X_{i_{k}}\right)
    \simeq\Sigma\left(\bigvee_{I\notin K}\vert K_{I}\vert\ast\widehat{X}^{I}\right). 
\end{equation}  
The wedge summands $X_{i_{1}}\wedge\cdots\wedge X_{i_{k}}$ are 
indecomposable in a coordinate-wise sense - that is, 
$X_{i_{1}}\wedge\cdots\wedge X_{i_{k}}$ 
is not homotopy equivalent to a wedge 
$(X_{j_{1}}\wedge\cdots\wedge X_{j_{l}})\vee 
    (X_{j^{\prime}_{1}}\wedge\cdots\wedge X_{j^{\prime}_{l^{\prime}}})$. 
Therefore the wedge summands that appear on each side of the equivalence 
in~(\ref{listequiv}) must be the same and appear with the same multiplicity. 
Thus the two indexing sets in~(\ref{listequiv}) coincide, so we obtain 
$\cxx^{K}\simeq\left(\bigvee_{I\notin K}\vert K_{I}\vert\ast\widehat{X}^{I}\right)$, 
as required. 
\end{proof}

\section{Examples} 
\label{sec:examples} 

We consider the two shifted cases from Examples~(\ref{shiftedexamples}). 
First, let $K=\Delta^{n-1}_{k}$, the full $k$-skeleton of $\Delta^{n-1}$. 
Phrased in terms of polyhedral products, Porter~\cite{P2} showed that for 
any simply-connected spaces $X_{1},\ldots,X_{n}$, there is a homotopy equivalence 
\[(\underline{C\Omega X},\underline{\Omega X})^{K}\simeq\bigvee_{j=k+2}^{n}
      \left(\bigvee_{1\leq i_{1}<\cdots<i_{j}\leq n}\ \binom{j-1}{k+1}
      \Sigma^{k+1}\Omega X_{i_{1}}\wedge\cdots\wedge\Omega X_{i_{j}}\right).\] 
Theorem~\ref{cxxshifted} now generalizes this. If $X_{1},\ldots,X_{n}$ are 
any path-connected spaces, there is a homotopy equivalence 
\[\cxx^{K}\simeq\bigvee_{j=k+2}^{n}
      \left(\bigvee_{1\leq i_{1}<\cdots<i_{j}\leq n}\ \binom{j-1}{k+1}
      \Sigma^{k+1} X_{i_{1}}\wedge\cdots\wedge X_{i_{j}}\right).\]  
For example, this decomposition holds not just for $X_{i}=\Omega S^{n_{i}}$ 
as in Porter's case, but also for the spheres themselves, $X_{i}=S^{n_{i}}$. 

Second, let $K$ be the simplicial complex in Examples~\ref{shiftedexamples}~(1), 
which is two copies of $\Delta^{2}_{1}$ glued along a common edge.  
Specifically, $K$ is the simplicial complex with vertices $\{1,2,3,4\}$ and edges 
$\{(1,2),(1,3),(1,4),(2,4)\}$. To illustrate the algorithmic nature of the proof of 
Theorem~\ref{cxxshifted}, we will carry out the iterative procedure 
for identifying the homotopy type of $\cxx^{K}$. 
Starting with $K_{0}=\starr(1)$, we glue in one edge at a time: 
let $K_{1}=K_{0}\cup_{\{2,3\}}(2,3)$ and $K_{2}=K_{1}\cup_{\{2,4\}}(2,4)$. 
Note that $K_{2}=K$. We begin to identify homotopy types. 

\noindent\textit{Step 1}: For $K_{0}$ we have $\starr(1)=(1)\ast\link(1)$ 
where $\link(1)=\{2,3,4\}$. So Lemma~\ref{joinproduct} implies that  
$\cxx^{\starr(1)}\simeq CX_{1}\times\cxx^{\link(1)}\simeq\cxx^{\link(1)}$. 
Since $\link(1)=\Delta^{2}_{0}$, we can apply the previous example to obtain 
a homotopy equivalence 
\[\cxx^{K_{0}}\simeq(\Sigma X_{2}\wedge X_{3})\vee(\Sigma X_{2}\wedge X_{4}) 
      \vee(\Sigma X_{3}\wedge X_{4})\vee 2\cdot(\Sigma X_{2}\wedge X_{3}\wedge X_{4}).\] 

\noindent{\textit{Step 2}: Since $K_{1}=K_{0}\cup_{\{2,3\}}(2,3)$, 
Theorem~\ref{conefillpo} implies that there is a homotopy equivalence 
\begin{equation} 
  \label{exeq1} 
  \cxx^{K_{1}}\simeq D_{1}\vee[(X_{2}\ast X_{3}\ast X_{1})\rtimes X_{4}] 
\end{equation}  
where there is a split cofibration 
\(\nameddright{(X_{2}\ast X_{3})\rtimes X_{4}}{}{\cxx_{K_{0}}}{}{D_{1}}\). 
As 
$(X_{2}\ast X_{3})\rtimes X_{4}\simeq(\Sigma X_{2}\wedge X_{3})\vee 
      (\Sigma X_{2}\wedge X_{3}\wedge X_{4})$, 
the homotopy equivalence for $\cxx^{K_{0}}$ in Step~1 implies that 
there is a homotopy equivalence 
\[D_{1}\simeq(\Sigma X_{2}\wedge X_{4})\vee(\Sigma X_{3}\wedge X_{4}) 
     \vee(\Sigma X_{2}\wedge X_{3}\wedge X_{4}).\] 
Thus~(\ref{exeq1}) implies that there is a homotopy equivalence 
\[\cxx^{K_{1}}\simeq(\Sigma X_{2}\wedge X_{4})\vee(\Sigma X_{3}\wedge X_{4}) 
     \vee(\Sigma X_{2}\wedge X_{3}\wedge X_{4})  
     \vee(\Sigma^{2} X_{1}\wedge X_{2}\wedge X_{3}) 
     \vee(\Sigma^{2} X_{1}\wedge X_{2}\wedge X_{3}\wedge X_{4}).\] 

\noindent{\textit{Step 3}: Since $K_{2}=K_{1}\cup_{\{2,4\}}(2,4)$, 
Theorem~\ref{conefillpo} implies that there is a homotopy equivalence 
\begin{equation} 
  \label{exeq2} 
  \cxx^{K_{2}}\simeq D_{2}\vee[(X_{2}\ast X_{4}\ast X_{1})\rtimes X_{3}] 
\end{equation}  
where there is a split cofibration 
\(\nameddright{(X_{2}\ast X_{4})\rtimes X_{3}}{}{\cxx_{K_{1}}}{}{D_{2}}\). 
As 
$(X_{2}\ast X_{4})\rtimes X_{3}\simeq(\Sigma X_{2}\wedge X_{4})\vee 
      (\Sigma X_{2}\wedge X_{3}\wedge X_{4})$, 
the homotopy equivalence for $\cxx^{K_{1}}$ in Step~2 implies that 
there is a homotopy equivalence 
\[D_{2}\simeq(\Sigma X_{3}\wedge X_{4})\vee(\Sigma^{2} X_{1}\wedge X_{2}\wedge X_{3}) 
     \vee(\Sigma^{2} X_{1}\wedge X_{2}\wedge X_{3}\wedge X_{4}).\] 
Thus~(\ref{exeq2}) implies that there is a homotopy equivalence 
\[\cxx^{K}=\cxx^{K_{2}}\simeq(\Sigma X_{3}\wedge X_{4}) 
     \vee(\Sigma^{2} X_{1}\wedge X_{2}\wedge X_{3}) 
     \vee(\Sigma^{2} X_{1}\wedge X_{3}\wedge X_{4}) 
     \vee 2\cdot(\Sigma^{2} X_{1}\wedge X_{2}\wedge X_{3}\wedge X_{4}).\]

\section{Extensions of the method I: gluing along a common face} 
\label{sec:gluing}  

The basic idea behind proving Theorem~\ref{cxxshifted} was to 
present $\cxx^{K}$ as the end result of a sequence of pushouts, 
and then analyze the homotopy theory of the pushouts. In these 
terms, the key ingredient of the proof was Lemma~\ref{polemma}. 
The idea behind the method is therefore very general. One can 
look for different constructions of $K$ which translate to 
a sequence of homotopy pushouts constructing $\cxx^{K}$, whose 
homotopy theory can be analyzed. This may apply to different 
classes of complexes $K$ other than the shifted class. In this 
section we give such a construction. 

Let~$K$ be a simplicial complex on the index set $[n]$. Suppose 
$K=K_{1}\cup_{\tau} K_{2}$ for $\tau$ a simplex in~$K$. That is, $K$ 
is the result of gluing~$K_{1}$ and $K_{2}$ together along a common face. 
Relabelling the vertices if necessary, we may assume that~$K_{1}$ 
is defined on the vertices $\{1,\ldots,m\}$, $K_{2}$ is defined on 
the vertices $\{m-l+1,\ldots,n\}$ and $\tau$ is defined on the 
vertices $\{m-l+1,\ldots,m\}$. Let $\overline{K}_{1}$, $\overline{K}_{2}$ 
and~$\overline{\tau}$ be $K_{1}$, $K_{2}$ and $\tau$ regarded as 
simplicial complexes on $[n]$. So 
$K=\overline{K}_{1}\cup_{\overline{\tau}}\overline{K}_{2}$. 

Let $\sigma\in K_{1}$ and let $\overline{\sigma}$ be its image 
in $\overline{K}_{1}$. By definition of $\overline{\sigma}$, we have 
$i\notin\overline{\sigma}$ for $i\in\{m+1,\ldots,n\}$. Thus 
$\cxx^{\overline{\sigma}}=\cxx^{\sigma}\times X_{m+1}\times\cdots\times X_{n}$. 
Consequently, taking the union over all the faces in $\overline{K}_{1}$, we obtain 
\[\cxx^{\overline{K}_{1}}=\cxx^{K_{1}}\times X_{m+1}\times\cdots\times X_{n}.\] 
Similarly, we have 
\[\cxx^{\overline{K}_{2}}=X_{1}\times\cdots\times X_{m-l}\times\cxx^{K_{2}}.\] 
Since $\tau=\Delta_{m-l-1}$, we have 
$\cxx^{\tau}=CX_{m-l+1}\times\cdots\times CX_{m}$, so as above we obtain 
\[\cxx^{\overline{\tau}}=X_{1}\times\cdots\times X_{m-l}\times 
        CX_{m-l+1}\times\cdots\times CX_{m}\times X_{m+1}\times\cdots\times X_{n}.\] 
Since $K=K_{1}\cup_{\tau} K_{2}$, by Proposition~\ref{uxaunion} there 
is a pushout 
\begin{equation} 
  \label{podgrm} 
  \diagram 
     X_{1}\times\cdots\times X_{m-l}\times\cxx^{\tau}\times 
          X_{m+1}\times\cdots\times X_{n}\rto^-{a}\dto^{b}  
         & \cxx^{K_{1}}\times X_{m+1}\times\cdots\times X_{n}\dto \\ 
     X_{1}\times\cdots\times X_{m-l}\times\cxx^{K_{2}}\rto & \cxx^{K} 
  \enddiagram 
\end{equation} 
where $a$ and $b$ are coordinate-wise inclusions. 

We next identify the homotopy classes of $a$ and $b$. We use 
the Milnor-Moore convention of writing the identity map 
\(\namedright{Y}{}{Y}\) 
as $Y$. To simplify notation, let $M=X_{1}\times\cdots\times X_{m-l}$ 
and $N=X_{m+1}\times\cdots\times X_{n}$. Then the domain of $a$ 
and $b$ is $M\times\cxx^{\tau}\times N$. Since $a$ and $b$ are 
coordinate-wise inclusions, their homotopy classes are determined 
by their restrictions to $M$, $\cxx^{\tau}$ and $N$. Consider $a$. 
Since each vertex $\{i\}\in K$ for $1\leq i\leq m-l$, 
Corollary~\ref{projinclcor} implies that the restriction of $a$ to $M$ is 
null homotopic. Since $\cxx^{\tau}$ is a product of cones, it is 
contractible, so the restriction of $a$ to $\cxx^{\tau}$ is null 
homotopic. Since $a$ is a coordinate-wise inclusion, it is the identity 
map on $X_{m+1}\times\cdots\times X_{n}$. Thus 
$a\simeq\ast\times\ast\times N$. Similarly, 
$b\simeq M\times\ast\times\ast$. Thus we can rewrite~(\ref{podgrm}) 
as a pushout 
\begin{equation} 
  \label{podgrm2} 
  \diagram 
         M\times\cxx^{\tau}\times N\rto^-{f\times N}\dto^{M\times g} 
                & \cxx^{K_{1}}\times N\dto \\ 
         M\times\cxx^{K_{2}}\rto & \cxxk 
  \enddiagram 
\end{equation} 
where $f$ and $g$ are null homotopic. 

We wish to identify the homotopy type of \cxxk. To do so we use 
a general lemma, proved in~\cite{GT1}. Let $A$ and $B$ 
be spaces. Recall that the \emph{join} of $A$ and $B$ is 
$A\ast B=A\times I\times B/\sim$, where $(x,0,y_{1})\sim (x,0,y_{2})$ 
and $(x_{1},1,y)\sim (x_{2},1,y)$, and there is a homotopy equivalence 
$A\ast B\simeq\Sigma A\wedge B$. The \emph{left half-smash} 
of $A$ and $B$ is $A\ltimes B=A\times B/\sim$ where $(a,\ast)\sim\ast$, 
and the \emph{right half-smash} of $A$ and $B$ is 
$A\rtimes B=A\times B/\sim$ where $(\ast,b)\sim\ast$.  

\begin{lemma} 
   \label{polemma1} 
   Let 
   \[\diagram 
              A\times B\rto^-{\ast\times B}\dto^{A\times\ast} 
                     & C\times B\dto \\ 
              A\times D\rto & Q 
     \enddiagram\] 
   be a homotopy pushout. Then there is a homotopy equivalence 
   \[Q\simeq (A\ast B)\vee (A\ltimes D)\vee (C\rtimes B).\] 
   $\qqed$ 
\end{lemma}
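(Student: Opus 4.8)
\textbf{Proof proposal for Lemma~\ref{polemma1}.}

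The plan is to prove the homotopy equivalence by breaking the pushout into iterated pushouts, each of which can be analyzed using elementary facts about joins, half-smashes and the behaviour of pushouts under products. First I would recall the two general facts already used in the proof of Lemma~\ref{polemma}: (i) the pushout of the two projections \(\namedright{A\times B}{\pi_{1}}{A}\) and \(\namedright{A\times B}{\pi_{2}}{B}\) is homotopy equivalent to the join \(A\ast B\), with the canonical maps from each of \(A\) and \(B\) into \(A\ast B\) being null homotopic; and (ii) forming the pushout commutes with taking the product with a fixed space \(T\). Since \(A\times\ast\colon\namedright{A\times B}{}{A\times D}\) factors as \(\namedright{A\times B}{\pi_{1}}{A}\) followed by the inclusion \(\namedright{A}{}{A\times D}\), and similarly \(\ast\times B\) factors through \(\pi_{2}\colon\namedright{A\times B}{}{B}\), the given pushout can be expanded into a diagram of iterated homotopy pushouts exactly as in Lemma~\ref{polemma}: first push out the two projections of \(A\times B\) to land in \(A\ast B\), then push that result out along the inclusions into \(A\times D\) and \(C\times B\).

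The main work is then to identify the intermediate spaces. After the first pushout we obtain, up to homotopy, a span \(\llnameddright{A\times D}{}{A\ast B}{}{C\times B}\), where both maps out of \(A\ast B\) are determined by the null homotopic maps \(\namedright{A}{}{A\ast B}\) and \(\namedright{B}{}{A\ast B}\) together with the inclusions of the extra coordinate. Concretely, the map \(\namedright{A\ast B}{}{A\times D}\) is null homotopic onto \(A\) and is the inclusion on no further data, so it is homotopic to the basepoint inclusion; likewise for \(\namedright{A\ast B}{}{C\times B}\). This lets me pinch out \(A\ast B\) and split \(Q\) as a wedge: the pushout of the span \(\llnameddright{A\times D}{\ast}{A\ast B}{\ast}{C\times B}\) is homotopy equivalent to \((A\ast B)\vee(A\times D)\vee(C\times B)\) once we collapse the contractible cone coming from \(A\ast B\). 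Finally, the null homotopy of the original maps \(A\times\ast\) and \(\ast\times B\) on the \(A\)-coordinate of \(A\times D\) and the \(B\)-coordinate of \(C\times B\) respectively means that the copies of \(A\) in \(A\times D\) and of \(B\) in \(C\times B\) are killed in \(Q\); this collapses \(A\times D\) to \(A\ltimes D\) and \(C\times B\) to \(C\rtimes B\), yielding the claimed \(Q\simeq(A\ast B)\vee(A\ltimes D)\vee(C\rtimes B)\).

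The step I expect to require the most care is the bookkeeping of which subspaces get pinched at each stage: one must track that the copy of \(A\) sitting inside \(A\times D\) (via \(a\mapsto(a,\ast)\)) is precisely the image of the null homotopic map \(\namedright{A}{}{A\ast B}\) composed into the pushout, so that pinching is legitimate and produces exactly the half-smash \(A\ltimes D\) rather than something larger; symmetrically for \(B\subseteq C\times B\) and \(C\rtimes B\). Since this lemma is quoted as proved in~\cite{GT1}, the cleanest route here is simply to cite that reference rather than reproduce the diagram chase, so the proof in the present paper can be a one-line appeal to~\cite{GT1} with the sketch above indicating why it holds.
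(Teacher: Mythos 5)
Your bottom line coincides with the paper's: Lemma~\ref{polemma1} is stated with no proof in the text, only the remark that it is ``a general lemma, proved in~\cite{GT1}'', so the one-line appeal to~\cite{GT1} is exactly what the paper does, and your sketch of why it holds (factor $A\times\ast$ and $\ast\times B$ through the projections, use that the pushout of the two projections of $A\times B$ is $A\ast B$ with null-homotopic structure maps, then pinch to produce the half-smashes as cofibres of the inclusions $A\hookrightarrow A\times D$ and $B\hookrightarrow C\times B$) is the standard and correct argument. One small correction to the sketch: after the first stage there is no span $A\times D\leftarrow A\ast B\to C\times B$; the arrows go the other way, with $A\ast B$ receiving null-homotopic maps from $A$ and from $B$, and the iterated pushout then glues $A\times D$ along $A$ and $C\times B$ along $B$, which is what yields $A\ltimes D$ and $C\rtimes B$ as the respective cofibres.
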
 

Lemma~\ref{polemma} does not quite fit the setup in~(\ref{podgrm2}). 
To get this we need a slight adjustment. 

\begin{lemma} 
   \label{polemma2} 
   Let 
   \[\diagram 
              A\times E\times B\rto^-{f\times B}\dto^{A\times g} 
                     & C\times B\dto \\ 
              A\times D\rto & Q 
     \enddiagram\] 
   be a homotopy pushout, where $E$ is contractible and $f$ and $g$ 
   are null homotopic. Then there is a homotopy equivalence 
   \[Q\simeq (A\ast B)\vee (A\ltimes D)\vee (C\rtimes B).\] 
\end{lemma}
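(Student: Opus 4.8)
The plan is to reduce Lemma~\ref{polemma2} to Lemma~\ref{polemma1} by replacing the homotopy pushout square with an equivalent one that literally has the form required by Lemma~\ref{polemma1}. The key observation is that a homotopy pushout depends only on the homotopy classes of its two maps out of the common corner, so we are free to replace $f\times B$ and $A\times g$ by any maps homotopic to them, and we are free to replace the corner space $A\times E\times B$ by any homotopy equivalent space compatibly.

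First I would use that $E$ is contractible to produce a homotopy equivalence $A\times E\times B\simeq A\times B$, under which the projection (or any fixed inclusion/collapse) is a homotopy inverse. Precomposing the two maps $f\times B$ and $A\times g$ with this equivalence, the pushout $Q$ is unchanged up to homotopy equivalence, and we obtain a new homotopy pushout square with corner $A\times B$ and two maps $A\times B\to C\times B$ and $A\times B\to A\times D$. The point is now to identify these two composite maps up to homotopy. Since $f$ is null homotopic, $f\times B$ is homotopic to the composite $A\times E\times B\to B\to C\times B$ where the first map is the projection and the second picks out the basepoint in the $C$-coordinate; after transporting along $A\times E\times B\simeq A\times B$ this becomes $A\times B\xrightarrow{\pi_2} B\xrightarrow{\ast\times B} C\times B$, i.e. exactly the map $\ast\times B$ of Lemma~\ref{polemma1}. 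Symmetrically, since $g$ is null homotopic, $A\times g$ transports to the map $A\times B\xrightarrow{\pi_1}A\xrightarrow{A\times\ast}A\times D$, which is the map $A\times\ast$ of Lemma~\ref{polemma1}.

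Having matched the square with the hypothesis of Lemma~\ref{polemma1}, I would simply quote that lemma to conclude $Q\simeq (A\ast B)\vee(A\ltimes D)\vee(C\rtimes B)$, which is the desired statement.

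The only place requiring a little care — and the step I expect to be the main (mild) obstacle — is verifying that the two homotopies (one making $f\times B$ factor through the projection onto $B$, and one making $A\times g$ factor through the projection onto $A$) can be chosen simultaneously and compatibly with the same homotopy equivalence $A\times E\times B\simeq A\times B$, so that one genuinely lands on the strict square of Lemma~\ref{polemma1} rather than merely a square homotopic in each map separately. This is handled by fixing once and for all the homotopy equivalence $h\colon A\times B\to A\times E\times B$ sending $(a,b)$ to $(a,e_0,b)$ for a chosen point $e_0\in E$, and observing that with this choice the composites $(f\times B)\circ h$ and $(A\times g)\circ h$ are already in the desired form on the nose up to the single null homotopies of $f$ and $g$; since a homotopy pushout is invariant under replacing the two structure maps by homotopic maps (keeping the corner fixed), this suffices. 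Everything else is a direct appeal to Lemma~\ref{polemma1}.
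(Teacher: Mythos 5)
Your proof is correct and is essentially the paper's own argument: the paper likewise fixes the inclusion $j\colon A\times B\to A\times E\times B$ (a homotopy equivalence since $E$ is contractible), observes that $(f\times B)\circ j\simeq \ast\times B$ and $(A\times g)\circ j\simeq A\times\ast$, and then quotes Lemma~\ref{polemma1}. Your extra care about replacing both structure maps compatibly is the same invariance of homotopy pushouts that the paper implicitly uses.
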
 
      
\begin{proof} 
Let 
\(j\colon\namedright{A\times B}{}{A\times E\times B}\) 
be the inclusion. Observe that $(f\times B)\circ j\simeq\ast\times B$ 
and $(A\times g)\circ j\simeq A\times\ast$. Thus as $E$ is contractible, 
$j$ is a homotopy equivalence and the pushout in the statement of this  
lemma is equivalent, up to homotopy, to the pushout in the 
statement of Lemma~\ref{polemma}. The homotopy equivalence 
for $Q$ now follows. 
\end{proof}        

Applying Lemma~\ref{polemma2} to the pushout in~(\ref{podgrm2}), 
we obtain the following. 

\begin{theorem} 
   \label{commonface} 
   Let $K$ be a simplicial complex on the index set $[n]$. Suppose that  
   $K=K_{1}\cup_{\tau} K_{2}$ where $\tau$ is a common face 
   of $K_{1}$ and $K_{2}$. Then there is a homotopy equivalence 
   \[\cxxk\simeq (M\ast N)\vee (\cxx^{K_{1}}\rtimes N)\vee 
           (M\ltimes\cxx^{K_{2}})\vee\] 
   where $M=X_{1}\times\cdots\times X_{m-l}$ and 
   $N=X_{m+1}\times\cdots\times X_{n}$.~$\qqed$ 
\end{theorem}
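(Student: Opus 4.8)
The plan is to present $\cxxk$ as a homotopy pushout of exactly the shape handled by Lemma~\ref{polemma2}, and then read off the answer. First I would apply Proposition~\ref{uxaunion} to the decomposition $K=\overline{K}_{1}\cup_{\overline{\tau}}\overline{K}_{2}$, obtaining a pushout expressing $\cxxk$ as $\cxx^{\overline{K}_{1}}\cup_{\cxx^{\overline{\tau}}}\cxx^{\overline{K}_{2}}$; since the inclusions of polyhedral products induced by subcomplex inclusions are closed cofibrations, this may be regarded as a homotopy pushout. Next I would compute the three corners. Because $\overline{K}_{1}$ is $K_{1}$ with the vertices $\{m+1,\dots,n\}$ adjoined as vertices belonging to no face, each such coordinate contributes a factor $X_{i}$, so $\cxx^{\overline{K}_{1}}=\cxx^{K_{1}}\times X_{m+1}\times\cdots\times X_{n}$; symmetrically $\cxx^{\overline{K}_{2}}=X_{1}\times\cdots\times X_{m-l}\times\cxx^{K_{2}}$; and since $\tau$ is a full simplex, $\cxx^{\tau}$ is a product of cones, so $\cxx^{\overline{\tau}}=M\times\cxx^{\tau}\times N$ with $M$ and $N$ as in the statement. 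This is precisely the pushout~(\ref{podgrm}).

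The second step is to identify the homotopy classes of the two coordinate-wise inclusions $a$ and $b$ out of $M\times\cxx^{\tau}\times N$. Being coordinate-wise, each is determined up to homotopy by its restrictions to the three factors. The restriction to $\cxx^{\tau}$ is null homotopic, as $\cxx^{\tau}$ is a product of cones and hence contractible. The restriction of $a$ to $M$ is the inclusion of $X_{1}\times\cdots\times X_{m-l}$ into $\cxx^{K_{1}}$ (followed by inclusion of the factor $N$); it factors through the full product $X_{1}\times\cdots\times X_{m}$, whose inclusion into $\cxx^{K_{1}}$ is null homotopic by Corollary~\ref{projinclcor} (equivalently Proposition~\ref{projincl}), so $a|_{M}$ is null homotopic. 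Finally $a$ is the identity on $N$. Thus $a\simeq\ast\times\ast\times N$, and symmetrically $b\simeq M\times\ast\times\ast$; rewriting~(\ref{podgrm}) accordingly gives the pushout~(\ref{podgrm2}), in which $E=\cxx^{\tau}$ is contractible and the two ``wedge-axis'' maps $f$ and $g$ are null homotopic.

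The final step is to invoke Lemma~\ref{polemma2} with $A=M$, $E=\cxx^{\tau}$, $B=N$, $C=\cxx^{K_{1}}$ and $D=\cxx^{K_{2}}$: the inclusion $M\times N\hookrightarrow M\times\cxx^{\tau}\times N$ is a homotopy equivalence converting~(\ref{podgrm2}) into the diagram of Lemma~\ref{polemma1}, which yields
\[\cxxk\simeq (M\ast N)\vee(\cxx^{K_{1}}\rtimes N)\vee(M\ltimes\cxx^{K_{2}}),\]
the assertion of Theorem~\ref{commonface}. The one place where the hypotheses genuinely bite is the identification of $a|_{M}$ (and $b|_{N}$) as null homotopic: this needs the ``outside'' vertices $1,\dots,m-l$ to belong to $K_{1}$ (respectively $m+1,\dots,n$ to $K_{2}$), so that Corollary~\ref{projinclcor} applies; without this the pushout would not collapse to the form required by Lemma~\ref{polemma2}. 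Everything else is routine bookkeeping with the definition of the polyhedral product together with the two general pushout lemmas, so I expect this vertex-membership check, and the accompanying relabelling of indices, to be the only real obstacle.
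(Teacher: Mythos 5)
Your argument is correct and follows essentially the same route as the paper: apply Proposition~\ref{uxaunion} to $K=\overline{K}_{1}\cup_{\overline{\tau}}\overline{K}_{2}$, compute the three corners, identify $a\simeq\ast\times\ast\times N$ and $b\simeq M\times\ast\times\ast$ via contractibility of $\cxx^{\tau}$ and Corollary~\ref{projinclcor}, and conclude with Lemma~\ref{polemma2}. Your explicit remark that the null-homotopy of $a|_{M}$ (and $b|_{N}$) is where the vertex-membership hypothesis is actually used is a point the paper passes over more quickly, but it is the same proof.
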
  

For example, let $K$ be the simplicial complex in Example~\ref{shiftedexamples}~(1). 
Then $K$ can be obtained by gluing two copies of $\Delta^{2}_{1}$  
along an edge. Specifically, $K=K_{1}\cup_{\tau} K_{2}$ where 
$K_{1}$ is the simplicial complex on vertices $\{1,2,3\}$ having 
edges $\{(1,2), (1,3), (2,3)\}$; $K_{2}$ is the simplicial complex on 
vertices $\{1,2,4\}$ having edges $\{(1,2), (1,4), (2,4)\}$; and 
$\tau$ is the edge $(1,2)$. Since $K_{1}=\Delta^{2}_{1}$, 
Proposition~\ref{join} implies that 
$\cxx^{K_{1}}\simeq\Sigma^{2} X_{1}\wedge X_{2}\wedge X_{3}$. 
Similarly, $\cxx{K}_{2}\simeq\Sigma^{2} X_{1}\wedge X_{2}\wedge X_{4}$. 
In general, the space~$M$ is the product of the $X_{i}$'s where 
$i$ is not a vertex of $K_{2}$, and similarly for $N$ and $K_{1}$. 
So in this case $M=X_{3}$ and $N=X_{4}$. Theorem~\ref{commonface} 
therefore implies that there is a homotopy equivalence 
\[\cxx^{K}\simeq (X_{3}\ast X_{4})\vee 
       ((\Sigma^{2} X_{1}\wedge X_{2}\wedge X_{3})\rtimes X_{4})\vee 
       (X_{3}\ltimes\Sigma^{2} X_{1}\wedge X_{2}\wedge X_{4}).\] 
In general, there is a homotopy equivalence 
$\Sigma A\ltimes B\simeq\Sigma A\vee(\Sigma A\wedge B)$, and 
similarly for the right half-smash. Thus in our case we obtain a 
homotopy equivalence 
\[\cxx^{K}\simeq (\Sigma X_{3}\wedge X_{4})\vee 
      (\Sigma^{2} X_{1}\wedge X_{2}\wedge X_{3})\vee 
      (\Sigma^{2} X_{1}\wedge X_{2}\wedge X_{4})\vee 
      2\cdot(\Sigma^{2} X_{1}\wedge X_{2}\wedge X_{3}\wedge X_{4}).\] 
This matches the answer in Section~\ref{sec:examples}.  

Note that in this case $K$ is shifted, but Theorem~\ref{commonface} 
also applies to nonshifted complexes. For example, let $L_{2}$ be 
the previous example of two copies of $\Delta^{2}_{1}$ glued along 
an edge. Now glue another copy of $\Delta^{2}_{1}$ to $L_{2}$ along 
an edge. Then we obtain a complex $L_{3}$ on $[5]$ which is not shifted, 
but the homotopy equivalence for $\cxx^{L_{2}}$ and Theorem~\ref{commonface} 
imply that $\cxx^{L_{3}}\in\W_{5}$. In the same way, we could 
continue to iteratively glue in more copies of $\Delta^{2}_{1}$ along 
a common edge and obtain non-shifted complexes $L_{n-2}$ on $[n]$ 
with $\cxx^{L_{n-2}}\in\W_{n}$.

\section{Extensions of the method II: the simplicial wedge construction} 
\label{sec:wedge} 

Let $K$ be a simplicial complex on vertices $\{v_{1},\ldots,v_{n}\}$. 
Fix a vertex $v_{i}$. Define a new simplicial complex $K(v_{i})$ on 
the $n+1$ vertices $\{v_{1},\ldots,v_{i-1},v_{i,1},v_{i,2},v_{i+1},\ldots,v_{n}\}$ by  
\[K(v_{i})=\{v_{i,1},v_{i,2}\}\ast\link_{K}(v_{i})\cup 
       (v_{i,1},v_{i,2})\ast\rest_{K}(v_{i}).\] 
The simplicial complex $K(v_{i})$ is called the \emph{simplicial wedge} 
of $K$ on $v_{i}$. This construction arises in combinatorics (see~\cite{PB}) 
and has the important property that if $K$ is the boundary of the dual of a 
polytope then so is $K(v_{i})$. 

As in~\cite{BBCG2}, the construction can be iterated. To set this up, 
let $(1,\ldots,1)$ be a sequence of $n$ copies of~$1$, corresponding 
to the vertex set $\{v_{1},\ldots,v_{n}\}$. The vertex doubling operation 
of~$v_{i}$ in the simplicial wedge construction gives a new vertex set 
for $K(v_{i})$ -- listed above -- to which we associate the sequence 
$(1,\ldots,1,2,1,\ldots,1)$ of length $n$, where the $2$ appears in 
position $i$. The sequence $(1,\ldots,1,3,1,\ldots,1)$ then corresponds 
to either the simplicial wedge $(K(v_{i}))(v_{i,1})$ or to $(K(v_{i}))(v_{i,2})$. However, 
these two complexes are equivalent, so the choice of vertex $v_{i,1},v_{i,2}$ 
does not matter. More generally, let $J=(j_{1},\ldots,j_{n})$ be a sequence 
of positive integers. Define a new simplicial complex~$K(J)$ on vertices 
\[\{v_{1,1},\ldots,v_{1,j_{1}},v_{2,1},\ldots,v_{2,j_{2}},\ldots 
           v_{n,1},\ldots,v_{n,j_{n}}\}\] 
by iteratively applying the simplicial wedge construction, starting 
with $K$. 

We will show that if $K$ is shifted then, for any $J$, there is a 
homotopy decomposition 
$\cxx^{K(J)}\simeq\left(\bigvee_{I\notin K(J)} 
       \vert K(J)_{I}\vert\ast\widehat{X}^{I}\right)$. 
This improves on Theorem~\ref{cxxshifted} because the class 
of simplicial complexes obtained from shifted complexes by 
simplicial wedge constructions is strictly larger than the class 
of shifted complexes. We give an example to illustrate this. 

\begin{example} 
Let $K$ be the simplicial complex consisting of vertices $\{1,2,3,4\}$  
and edges $\{(1,2),(1,3)\}$. Observe that $K$ is shifted. Apply the simplicial 
wedge product which doubles vertex $4$, that is, let $J=(1,1,1,2)$. 
Then $K(J)$ is a simplicial complex on vertices $\{1,2,3,4a,4b\}$. 
We have $K(J)=(4a,4b)\ast\link_{K}(4)\cup\{4a,4b\}\ast\rest_{K}(4)$. 
Here, $\link_{K}(4)=\emptyset$ so $(4a,4b)\ast\link_{K}(4)=(4a,4b)$.  
As well, $\rest_{K}(4)$ has vertices $\{1,2,3\}$ and edges $\{(1,2),(1,3)\}$, 
so $\{4a,4b\}\ast\rest_{K}(4)$ has vertices $\{1,2,3,4a,4b\}$ and is the 
union of the faces $\{(1,2,4a), (1,2,4b), (1,3,4a), (1,3,4b)\}$. 

We claim that $K(J)$ is not shifted. Observe that the edge $(2,3)\notin K(J)$, 
but every other possible edge is in $K(J)$. That is, $(x,y)\in K(J)$ for 
every $x,y\in\{1,2,3,4a,4b\}$ except $(2,3)$. Thus with the ordering 
$1<2<3<4a<4b$, $K(J)$ does not satisfy the shifted condition as 
$(2,4a)\in K(J)$ would imply that $(2,3)\in K(J)$. So if $K(J)$ is to be 
shifted, we must reorder the vertices. Let  
$\{1^{\prime}, 2^{\prime}, 3^{\prime}, 4^{\prime}, 5^{\prime}\}$ 
be the new labels of the vertices. To satisfy the shifted condition 
we need to send the vertices $\{2,3\}$ to $\{4^{\prime},5^{\prime}\}$. 
The vertices $\{1,4a,4b\}$ are therefore sent to 
$\{1^{\prime}, 2^{\prime}, 3^{\prime}\}$. Now observe that the 
face $(1,4a,4b)\notin K(J)$. Thus in the new ordering, the face 
$(1^{\prime},2^{\prime},3^{\prime})\notin K(J)$. The shifted 
condition therefore implies that no $2$-dimensional faces are 
in $K(J)$, a contradiction. Hence there is no reordering of the 
vertices of $K(J)$ for which the shifted condition holds. Hence 
$K(J)$ is not shifted. 
\end{example} 

\begin{proposition} 
   \label{cxxwedge} 
   Let $K$ be a shifted complex on $n$ vertices. If $v_{i}\in K$ is a vertex, 
   then there is a homotopy equivalence 
   \[\cxx^{K(v_{i})}\simeq\left(\bigvee_{I\notin K(v_{i})} 
       \vert K(v_{i})_{I}\vert\ast\widehat{X}^{I}\right).\] 
\end{proposition}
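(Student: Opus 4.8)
The plan is to identify $\cxx^{K(v_{i})}$, up to homotopy, with the polyhedral product of the \emph{same} shifted complex $K$ but with the pair over $v_{i}$ enlarged, and then invoke Theorem~\ref{cxxshifted}. Write $(v_{i,1}v_{i,2})$ for the $1$-simplex on the two new vertices and $\partial(v_{i,1}v_{i,2})$ for its boundary (the two points $v_{i,1},v_{i,2}$), and recall that the simplicial wedge is the union $K(v_{i})=\bigl(\partial(v_{i,1}v_{i,2})\ast\rest_{K}(v_{i})\bigr)\cup\bigl((v_{i,1}v_{i,2})\ast\link_{K}(v_{i})\bigr)$, whose two pieces meet in $\partial(v_{i,1}v_{i,2})\ast\link_{K}(v_{i})$. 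First I would apply Corollary~\ref{uxaunioncor} (the polyhedral product commutes with pushouts) and Lemma~\ref{joinproduct} (a join becomes a product) to this pushout; since $\cxx^{\partial(v_{i,1}v_{i,2})}=CX_{i,1}\times X_{i,2}\cup X_{i,1}\times CX_{i,2}$ and $\cxx^{(v_{i,1}v_{i,2})}=CX_{i,1}\times CX_{i,2}$, this exhibits $\cxx^{K(v_{i})}$ as an explicit pushout of products. Running the same two facts on the standard pushout $K=\starr_{K}(v_{i})\cup_{\link_{K}(v_{i})}\rest_{K}(v_{i})$, together with $\starr_{K}(v_{i})=v_{i}\ast\link_{K}(v_{i})$, produces the identical pushout for the polyhedral product $(\underline{Y},\underline{A})^{K}$ with $(Y_{j},A_{j})=(CX_{j},X_{j})$ for $j\neq i$ and $(Y_{i},A_{i})=\bigl(CX_{i,1}\times CX_{i,2},\,CX_{i,1}\times X_{i,2}\cup X_{i,1}\times CX_{i,2}\bigr)$. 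Hence $\cxx^{K(v_{i})}\cong(\underline{Y},\underline{A})^{K}$.

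Next I would note that $Y_{i}=CX_{i,1}\times CX_{i,2}$ is contractible and $A_{i}\hookrightarrow Y_{i}$ is a cofibration, so $(Y_{i},A_{i})$ is relatively homotopy equivalent to the cone pair $(CA_{i},A_{i})$, while Proposition~\ref{join} identifies $A_{i}=\cxx^{\partial(v_{i,1}v_{i,2})}$ with $X_{i,1}\ast X_{i,2}$. By homotopy invariance of the polyhedral product in its pair variables (\cite{BBCG1,BP1}) it follows that $\cxx^{K(v_{i})}\simeq(\underline{CX'},\underline{X'})^{K}$, where $X'_{j}=X_{j}$ for $j\neq i$ and $X'_{i}=X_{i,1}\ast X_{i,2}$. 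As $K$ is shifted, Theorem~\ref{cxxshifted} now gives $\cxx^{K(v_{i})}\simeq\bigvee_{I\notin K}\vert K_{I}\vert\ast\widehat{X'}^{I}$, where $\widehat{X'}^{I}$ denotes the smash product over $I$ formed from the $X'_{j}$.

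What remains is to rewrite this wedge in terms of $K(v_{i})$ itself, i.e. to prove $\bigvee_{I\notin K}\vert K_{I}\vert\ast\widehat{X'}^{I}\simeq\bigvee_{J\notin K(v_{i})}\vert K(v_{i})_{J}\vert\ast\widehat{X}^{J}$. I would organize the second wedge by how many of $v_{i,1},v_{i,2}$ lie in $J$. If exactly one does, then the full subcomplex $K(v_{i})_{J}$ is a cone over a full subcomplex of $K$, so $\vert K(v_{i})_{J}\vert$ is contractible and that summand is null. The sets $J\notin K(v_{i})$ containing both (resp.\ neither) of $v_{i,1},v_{i,2}$ correspond bijectively to the missing faces $I\notin K$ containing (resp.\ not containing) $v_{i}$, via $J=(I\setminus\{v_{i}\})\cup\{v_{i,1},v_{i,2}\}$ (resp.\ $J=I$), and under this correspondence $K(v_{i})_{J}$ is exactly the simplicial wedge $(K_{I})(v_{i})$. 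The general identity $\vert M(v)\vert\simeq\Sigma\vert M\vert$ --- a standard consequence of the pushout presentation of $M(v)$, obtained by collapsing the contractible piece $(v_{1}v_{2})\ast\link_{M}(v)$ --- combined with the elementary join relations $A\ast\Sigma B\simeq\Sigma A\ast B$ and $(\mbox{contractible})\ast(-)\simeq\ast$, then matches $\vert K(v_{i})_{J}\vert\ast\widehat{X}^{J}$ with $\vert K_{I}\vert\ast\widehat{X'}^{I}$ summand by summand, the extra suspension being absorbed by the factor $X'_{i}=X_{i,1}\ast X_{i,2}\simeq\Sigma(X_{i,1}\wedge X_{i,2})$ when $v_{i}\in I$.

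I expect the last step to be the real work: correctly matching full subcomplexes of $K(v_{i})$ with those of $K$, verifying the cone claim that annihilates the mixed terms, and being careful enough about reduced versus unreduced suspensions that the relation $A\ast\Sigma B\simeq\Sigma A\ast B$ applies cleanly. Everything preceding it --- presenting $\cxx^{K(v_{i})}$ as a polyhedral product over $K$ and reducing its $v_{i}$-pair to a cone pair --- is formal, using only Corollary~\ref{uxaunioncor}, Lemma~\ref{joinproduct}, Proposition~\ref{join}, and homotopy invariance of polyhedral products.
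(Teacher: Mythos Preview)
Your proof is correct and rests on the same underlying observation as the paper's: the pushout presentation of $K(v_{i})$ in terms of $\link_{K}(v_{i})$ and $\rest_{K}(v_{i})$ has exactly the same shape as the standard pushout $K=\starr_{K}(v_{i})\cup_{\link_{K}(v_{i})}\rest_{K}(v_{i})$, so after applying the polyhedral product one sees $\cxx^{K(v_{i})}$ as ``$\cxx^{K}$ with $X_{v_{i}}$ replaced by $X_{v_{i,1}}\ast X_{v_{i,2}}$''. The paper simply records that the two pushout diagrams for $\cxx^{K(v_{i})}$ and $\cxx^{K}$ coincide formally and then declares that the procedure of Theorem~\ref{cxxshifted} may be rerun verbatim; it does not explicitly rewrite the answer in terms of missing faces of $K(v_{i})$.

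Your version is a bit different in execution and, in fact, more complete. Rather than rerun the shifted argument, you package the comparison as a genuine identification $\cxx^{K(v_{i})}\simeq(\underline{CX'},\underline{X'})^{K}$ with $X'_{v_{i}}=X_{v_{i,1}}\ast X_{v_{i,2}}$, invoking homotopy invariance of the polyhedral product in its pair variables (a fact external to the paper, but standard in~\cite{BBCG1,BP1}); this lets you apply Theorem~\ref{cxxshifted} as a black box. You then add a step the paper omits entirely: the combinatorial translation from the $K$-indexed wedge to the $K(v_{i})$-indexed wedge, via the observations that full subcomplexes $K(v_{i})_{J}$ with exactly one of $v_{i,1},v_{i,2}$ in $J$ are cones, that those with both (resp.\ neither) are simplicial wedges of the corresponding $K_{I}$, and that $\vert M(v)\vert\simeq\Sigma\vert M\vert$. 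Each of these checks is straightforward from the pushout description and your outline of them is accurate. The net effect is the same conclusion as the paper's, reached along the same conceptual line, but with the reindexing made honest rather than left implicit.
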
 

\begin{proof} 
We have 
$K(v_{i})=(v_{i,1},v_{i,2})\ast\link_{K}(v_{i})\cup\{v_{i,1},v_{i,2}\}\ast\rest_{K}(v_{i})$. 
Recall that, by definition, $\link_{K}(v_{i})\subseteq\rest_{K}(v_{i})$. Thus 
$(v_{i,1},v_{i,2})\ast\link_{K}(v_{i})\cap\{v_{i,1},v_{i,2}\}\ast\rest_{K}(v_{i})= 
     \{v_{i,1},v_{i,2}\}\ast\link_{K}(v_{i})$. 
We can therefore regard $K(v_{i})$ as a pushout 
\[\diagram 
       \{v_{i,1},v_{i,2}\}\ast\link_{K}(v_{i})\rto\dto 
             & \{v_{i,1},v_{i,2}\}\ast\rest_{K}(v_{i})\dto \\ 
       (v_{i,1},v_{i,2})\ast\link_{K}(v_{i})\rto & K(v_{i}). 
  \enddiagram\] 
Thus Proposition~\ref{uxaunion} implies that there is a pushout of spaces 
\begin{equation} 
  \label{wedgedgrm1} 
  \diagram 
        \cxx^{\{v_{i,1},v_{i,2}\}}\times\cxx^{\link_{K}(v_{i})} 
                \rto^-{1\times g}\dto^{f\times 1} 
             & \cxx^{\{v_{i,1},v_{i,2}\}}\times\cxx^{\rest_{K}(v_{i})}\dto \\ 
       \cxx^{(v_{i,1},v_{i,2})}\times\cxx^{\link_{K}(v_{i})}\rto & \cxx^{K(v_{i})} 
  \enddiagram 
\end{equation} 
where $f$ and $g$ are inclusions. Since $\{v_{i,1},v_{i,2}\}$ is a copy 
of $\Delta^{1}_{0}$, Proposition~\ref{join} implies that 
$\cxx^{\{v_{i,1},v_{i,2}\}}\simeq\Sigma X_{v_{i,1}}\ast X_{v_{i,2}}$. 
Since $\cxx^{(v_{i,1},v_{i,2})}$ is a copy of $\Delta^{1}$, the definition 
of the polyhedral product implies that 
$\cxx^{(v_{i,1},v_{i,2})}\simeq CX_{v_{i,1}}\times CX_{v_{i,2}}\simeq\ast$. 
Thus, up to homotopy equivalences, (\ref{wedgedgrm1})~is equivalent 
to the homotopy pushout 
\begin{equation} 
  \label{wedgedgrm2} 
  \diagram 
        (\Sigma X_{v_{i,1}}\ast X_{v_{i,2}})\times\cxx^{\link_{K}(v_{i})} 
                \rto^-{1\times g}\dto^{\pi_{2}} 
             & (\Sigma X_{v_{i,1}}\ast X_{v_{i,2}})\times\cxx^{\rest_{K}(v_{i})}\dto \\ 
       \cxx^{\link_{K}(v_{i})}\rto & \cxx^{K(v_{i})}.  
  \enddiagram 
\end{equation} 

We will compare~(\ref{wedgedgrm2}) to another pushout. Since $K$ 
is shifted, there is a pushout 
\[\diagram 
        \link_{K}(v_{i})\rto\dto & \rest_{K}(v_{i})\dto \\ 
        \starr_{K}(v_{i})\rto & K. 
  \enddiagram\] 
Recall that $\starr_{K}(v_{i})=(v_{i})\ast\link_{K}(v_{i})$. Thus 
Proposition~\ref{uxaunion} implies that there is a pushout of spaces 
\[\diagram 
      X_{v_{i}}\times\cxx^{\link_{K}(v_{i})}\rto^-{1\times g}\dto^{f\times 1} 
            & X_{v_{i}}\times\cxx^{\rest_{K}(v_{i})}\dto \\ 
     CX_{v_{i}}\times\cxx^{\link_{K}(v_{i})}\rto & \cxx^{K} 
  \enddiagram\] 
where $f$ and $g$ are inclusions. Since $CX_{v_{i}}$ is contractible, 
we obtain a homotopy pushout diagram 
\begin{equation} 
  \label{wedgedgrm3} 
  \diagram 
      X_{v_{i}}\times\cxx^{\link_{K}(v_{i})}\rto^-{1\times g}\dto^{\pi_{2}} 
            & X_{v_{i}}\times\cxx^{\rest_{K}(v_{i})}\dto \\ 
     \cxx^{\link_{K}(v_{i})}\rto & \cxx^{K}. 
  \enddiagram 
\end{equation} 
Observe that~(\ref{wedgedgrm2}) and~(\ref{wedgedgrm3}) have exactly 
the same format. That is, (\ref{wedgedgrm2}) is precisely~(\ref{wedgedgrm3}) 
with the space $X_{v_{i}}$ replaced by $\Sigma X_{v_{i,1}}\ast X_{v_{i,2}}$. 
Thus exactly the same procedure used in the proof of Theorem~\ref{cxxshifted} 
can be applied to show that 
$\cxx^{K(J)}\simeq\left(\bigvee_{I\notin K(J)} 
       \vert K(J)_{I}\vert\ast\widehat{X}^{I}\right)$. 
\end{proof} 

Since $K(v_{i})$ need not be shifted, Proposition~\ref{cxxwedge} cannot 
be used to produce iterative decompositions of $\cxx^{K(J)}$ for any $J$. 
However, the methods involved should be adaptable to the more general 
case. So we conjecture that for any $J$, there is a homotopy equivalence 
\[\cxx^{K(J)}\simeq\left(\bigvee_{I\notin K(J)} 
       \vert K(J)_{I}\vert\ast\widehat{X}^{I}\right).\]

\bibliographystyle{amsalpha}

\end{document}